\documentclass[11pt]{amsart} 
\usepackage{amsthm,amsbsy,amsfonts,amssymb,amscd}
\usepackage{bbm}
\usepackage[enableskew]{youngtab}
\usepackage[mathscr]{euscript} 

\usepackage{enumerate} 
\usepackage{tikz} 
\usetikzlibrary{calc,intersections,through, backgrounds,arrows,decorations.pathmorphing,backgrounds,positioning,fit,petri} 
\usetikzlibrary{calc}

\usepackage{tikz-cd} 
\usepackage[all]{xy}
\xyoption{knot}
\xyoption{poly}

\usepackage{hyperref}

\usepackage[margin=1.1in]{geometry}

\setcounter{tocdepth}{1}

\newcommand{\Fn}{s\hspace{-0.3mm}\cF_n}

\newcommand{\fcs}{\mathsf{fcs}}

\newcommand{\pe}{\mathfrak{pe}}

\newcommand{\DA}{\mathcal{P}\hspace{-0.25mm}\mathcal{D}}

\newcommand{\LL}{{\mathbf{L}}}

\newcommand{\ideal}{\mathscr{I}}
\newcommand{\KK}{\mathscr{K}}

\newcommand{\sve}{\mathsf{svec}_{\mk}}


\newcommand{\cF}{{\mathcal{F}}}

\newcommand{\leeg}{\mbox{{\color{white} Test}}}

\newcommand{\Par}{{\mathsf{Par}}}

\newcommand{\bb}{\mathsf{b}}
\newcommand{\cc}{\mathsf{c}}
\newcommand{\aac}{\mathsf{a}}

\newcommand{\PA}{\mathsf{Par}}

\newcommand{\TL}{{\mathrm{TL}}} 

\newcommand{\II}{\mathrm{I}}
\newcommand{\XX}{\mathrm{X}}

\newcommand{\tto}{\twoheadrightarrow}

\newcommand{\cB}{\mathcal B}

\newcommand{\im}{{\rm{im}\,}}

\newcommand{\Id}{\mathrm{Id}}

\newcommand{\Hom}{\mathrm{Hom}}

\newcommand{\unit}{{\mathbbm{1}}}

\newcommand{\minus}{\scalebox{0.9}{{\rm -}}}

\newcommand{\TT}{\mathbf{T}}
\newcommand{\Nat}{\mathrm{Nat}} 
\newcommand{\End}{\mathrm{End}}



\newcommand{\dfs}{{/\kern-2pt/}}

\newcommand{\JJJ}{\mathscr{J}}
\newcommand{\op}{{\rm op}}
\newcommand{\mk}{\Bbbk}
\newcommand{\cC}{\mathcal{C}}

\newcommand{\cS}{\mathcal{S}}
\newcommand{\cA}{\mathcal{A}}
\newcommand{\Ob}{{\rm{Ob}\,}}

\newcommand{\charr}{{\rm{char}}}

\newcommand{\fg}{\mathfrak{g}}

\newcommand{\mN}{\mathbb{N}}

\newcommand{\mZ}{\mathbb{Z}}
\newcommand{\mF}{\mathbb{F}}
\newcommand{\mS}{\mathbb{S}}

\numberwithin{equation}{section}

\newcommand{\mP}{{\mathfrak{P}}}

\newcommand{\oa}{\bar{0}}

\newcommand{\ob}{\bar{1}}

\newtheoremstyle{notes} {} {} {} {} {\bfseries} {.} {.5em} {}

\theoremstyle{plain}

\newtheorem{prop}[subsubsection]{Proposition}

\newtheorem{lemma}[subsubsection]{Lemma}

\newtheorem{cor}[subsubsection]{Corollary}

\newtheorem{thm}[subsubsection]{Theorem}

\theoremstyle{remark}

\newtheorem{rem}[subsubsection]{Remark} 

\newtheorem{ddef}[subsubsection]{Definition} 

\swapnumbers

\usepackage{etoolbox}

\usepackage{ytableau}
\usepackage{youngtab}

\makeatletter
\pretocmd{\appendix}{\addtocontents{toc}{\protect\addvspace{10\p@}}}{}{}
\makeatother

\theoremstyle{remark}

\newtheorem{ex}[subsubsection]{Example}

\newtheoremstyle{construction} {} {} {} {} {\bfseries} { } {0pt} {}

\theoremstyle{construction}


%
\title[The periplectic Deligne category]{The periplectic Brauer algebra III: The Deligne category}


\newcommand{\ind}{{\rm Ind}}
\newcommand{\res}{{\rm Res}}

\keywords{ Deligne category, thick tensor ideals, periplectic Lie superalgebra, categorification, diagram algebras, Temperley-Lieb algebra, Fock space}

\subjclass[2010]{18D10, 17B10, 16G99}

\begin{document} 
\date{} 
\begin{abstract}
We construct a faithful categorical representation of an infinite Temperley-Lieb algebra on the periplectic analogue of Deligne's category.
We use the corresponding combinatorics to classify thick tensor ideals in this periplectic Deligne category.
This allows to determine the objects in the kernel of the monoidal functor going to the module category of the periplectic Lie supergroup.
We use this to classify indecomposable direct summands in the tensor powers of the natural representation, determine which are projective and determine their simple top.
	\end{abstract}

	\author{Kevin Coulembier}
\address{K.C.: School of Mathematics \& Statistics, University of Sydney, NSW 2006, Australia}
\email{kevin.coulembier@sydney.edu.au}

\author{Michael Ehrig}
\address{M.E.: School of Mathematics \& Statistics, University of Sydney, NSW 2006, Australia}
\email{michael.ehrig@sydney.edu.au}

\maketitle 


\section*{Introduction} 

This is the third paper in a series studying an analogue of the Brauer algebra which appears in invariant theory for the periplectic Lie superalgebra, see \cite{Moon}. In~\cite{PB1} the first author studied cellular and homological properties of the algebras over fields of arbitrary characteristic, leading in particular to a classification of the blocks in characteristic zero. In~\cite{PB2} we completed this by determining the Jordan-H\"older decomposition multiplicities of projective and cell modules.

In the current paper, we study the periplectic analogue $\DA$ of the {\em Deligne category} of~\cite{Deligne}, a strict monoidal supercategory with universal properties, defined in~\cite{Kujawa, Vera}. We construct a {\em categorical representation of~$\TL_\infty(0)$, the infinite Temperley-Lieb algebra with the circle evaluated at zero}, on~$\DA$. This can be interpreted as a natural analogue of the categorical representation of~$\mathfrak{sl}_{\infty}$ on module categories of symmetric groups or polynomial functors, see \cite{Oded, LLT}. Moreover, our approach should be adaptable to construct a categorical representation of~$\mathfrak{sl}_{\infty/2}\oplus \mathfrak{sl}_{\infty/2}$ on the ordinary Deligne category~$\underline{\rm Rep}(O_\delta)$ of \cite{Deligne, Comes}, which relates to~\cite{GS}.

Our categorical representation of~$\TL_\infty(0)$ is a `weak categorification' of a representation in the terminology of \cite{Maz}, since there is no known 2-categorical or monoidal notion of categorification of~$\TL_\infty$ that incorporates the specialisation at $0$. We prove that the representation we categorify, which is a representation of~$\TL_\infty(0)$ on bosonic Fock space, is faithful, which might be of use in developing such a notion.
The categorical representation of~$\TL_\infty(0)$ admits a filtration, where each composition factor corresponds to a cell of the monoidal supercategory $\DA$. Moreover, we show that the decategorification of the composition factors are isomorphic to representations of~$\TL_\infty(0)$ categorified in~\cite{gang}, and that both categorifications are very closely related.


The functor on the Deligne category which lies at the basis of the categorical representation is the tensor product with the generator. Its combinatorics determines explicitly the structure of the tensor product of this  generator and an arbitrary indecomposable object. In particular we use this to {\em classify the thick tensor ideals and cells} in the periplectic Deligne category $\DA$. The corresponding classification for the Deligne category~$\underline{\rm Rep}(O_\delta)$ was obtained in~\cite{Comes}. We use a different approach, compared to \cite{Comes}, to prove that the combinatorics of the tensor functor is related to the decomposition multiplicities of the periplectic Brauer algebra in \cite{PB2}. This approach is much more direct, since it does not rely on liftings of idempotents or classical invariant theory, and can thus be applied in many similar situations (including the one in~\cite{Comes}). In subsequent work in \cite{Ideals}, the first author will prove that our classification of thick tensor ideals on the level of objects actually yields a complete classification of the tensor ideals in~$\DA$ on the level of morphisms as well.

There exists a tensor functor from the periplectic Deligne category to the category of finite dimensional modules over the periplectic Lie supergroup, see \cite{Kujawa, Vera}, which is full by results in~\cite{DLZ}. Its kernel must be a thick tensor ideal and similarly the pre-image of the class of projective modules is a thick tensor ideal. Our classification of thick tensor ideals allows to determine efficiently those ideals. This thus yields {\em a classification of the indecomposable direct summands in the tensor powers of the natural representation} for the periplectic Lie supergroup. Furthermore, we determine {\em which direct summands are projective}. These results are analogues of the corresponding ones for orthosymplectic Lie supergroups in~\cite{Comes}. In contrast to \cite{Comes}, our methods do not rely on cohomological tensor functors and instead use simple combinatorial considerations to deduce the classification. Finally, we also {\em describe explicitly the highest weight of the top of each projective cover in terms of the combinatorics of the Deligne category}.

The paper is organised as follows.  After recalling some definitions and introducing some notation concerning monoidal supercategories and periplectic Brauer algebras in Section~\ref{SecPrel}, we study the elementary properties of the periplectic Deligne category~$\DA$ in Section~\ref{SecDelCat}. In Section~\ref{SecTensT}, we study the functor~$\TT$ on the Deligne category which corresponds to taking the tensor product with the generator. We prove that its action on objects can be described in terms of the decomposition multiplicities of the periplectic Brauer algebra in \cite{PB2} and that it decomposes as $\TT=\oplus_{i\in\mZ}\TT_i$ according to the eigenvalues of a natural transformation. Section~\ref{SecTL} is a purely combinatorial section where we prove uniqueness and existence of a representation of the Temperley-Lieb algebra~$\TL_{\infty}(0)$ on the space of partitions (the Fock space). It then follows that the functors~$\TT_i$ decategorify to this representation. We also prove that the representation is faithful and establish a filtration.
Section~\ref{SecMain} contains our main results, the classification of thick tensor ideals in~$\DA$ and the description of the higher tensor powers of the natural representation of the periplectic Lie supergroup. Finally, in Section~\ref{SecCateg} we construct natural transformations related to the functors~$\TT_i$ in order to improve the above decategorification statements to an actual categorical representation and filtration. Furthermore, we establish a connection between the composition factors of the filtration of our categorical representation and the categorical representations of~$\TL_\infty(0)$ in~\cite{gang}.

\section{Preliminaries}\label{SecPrel}

We set~$\mN=\{0,1,2,\ldots\}$. For a given set $S$, the power set is denoted by~$\mP(S)$ and the set of subsets of cardinality $n$ by~$\mP(S;n)$. Throughout the paper, $\mk$ is an algebraically closed field of characteristic zero. Let~$\sve$ denote the monoidal category of all $\mF_2$-graded $\mk$-vector spaces, with grading preserving morphisms. For elements~$v$ of degree $\oa$, resp. $\ob$, in a graded vectorspace, we write $|v|=0$, resp. $|v|=1$. For any $r\in\mZ_{\ge 1}$, we introduce the sets
$$\JJJ(r):=\{r-2i\,|\, 0\le i\le r/2\}\qquad\mbox{and}\qquad\JJJ^0(r):=\{r-2i\,|\, 0\le i< r/2\}.$$
Furthermore, we set~$\JJJ(0)=0=\JJJ^0(0)$. 

\subsection{Partitions}

We denote the set of partitions of all numbers by~$\PA$. The free $\mZ$-module of~$\mZ$-linear combinations of the elements of~$\PA$ will be denoted by~$\PA_{\mZ}$. All matrices that will appear in the paper will have their columns and rows labelled by~$\PA$ and have entries in~$\mZ$. 

We will identify a partition with its Young diagram, using English notation. Each box or node in the diagram has coordinates $(i,j)$, meaning that the box is in row $i$ and column~$j$. The {\em content} of a box in position~$(i,j)$ in a Young diagram is $j-i$. Any box with content $q$ will be referred to as a~$q$-box. The value $i+j$ will be referred to as the {\em anticontent} of the box. 

By a {\em rim hook} of~$\lambda$ we mean a removable and connected hook of~$\lambda$. By a (rim) $a$-hook for $a \in \mathbb{N}$ we mean a rim hook with~$a$ boxes. In case~$\lambda$ admits an addable~$q$-box, we write the partition obtained by adding said box as $\lambda  \boxplus q$. In case~$\lambda$ has a removable~$q$-box, we write the partition obtained by removing said box as $\lambda  \boxminus q$. 

For~$k\in\mN$, we fix the partition~$\partial^k$ of~$\frac{1}{2}k(k+1)$, defined as
$$\partial^k:=(k,k-1,\ldots,1,0).$$
The set~$\{\partial^k\,|\,k\in\mN\}$ thus consists of all $2$-cores, {\it i.e.} all partitions from which one cannot remove any rim 2-hook. For all $k\in\mN$, we define the following subsets of~$\PA$:
\begin{equation}\label{SetsPAk}\PA^{\ge k}=\{\lambda\,|\,\partial^k \subseteq \lambda\},\quad \PA^{\le k}=\PA\setminus \PA^{\ge(k+1)}\;\,\mbox{and}\quad \PA^k=\PA^{\ge k}\cap\PA^{\le k}.\end{equation}

\subsection{Supercategories} We recall some definitions of \cite[Section~1]{supercategory}. 
\subsubsection{}
A {\em supercategory} is defined as a category enriched over $\sve$. {\em Superfunctors} between supercategories are functors enriched in the same way. By definition, supercategories and superfunctors are thus in particular $\mk$-linear. 

For two supercategories~$\cB$ and~$\cC$, the supercategory~$\cB\boxtimes \cC$ has as objects ordered pairs $(X,Y)$, with~$X\in\Ob\cB$ and~$Y\in\Ob\cC$, and morphism spaces given by
$$\Hom_{\cB\boxtimes \cC}((X_1,Y_1),(X_2,Y_2))\;=\;\Hom_{\cB}(X_1,X_2)\otimes_\mk \Hom_{ \cC}(Y_1,Y_2),$$
with composition defined by the super interchange law
\begin{equation}\label{superComp}(f\boxtimes g)\circ (h\boxtimes k)\;=\; (-1)^{|h||g|}(f\circ h)\boxtimes (g\circ k).\end{equation}

\subsubsection{Natural transformations}Consider two supercategories~$\cC_1,\cC_2$ and superfunctors~$F,G:\cC_1\to\cC_2$.
A {\em natural transformation of superfunctors~$\xi:F\Rightarrow G$ of parity $p\in\mF_2$} is a family $\{\xi_X:FX\to GX\,|\,X\in\Ob\cC_1\}$ of morphisms of parity $p$ such that for any homogeneous morphism $\alpha: X\to Y$ in~$\cC_1$, we have $G(\alpha)\circ\xi_X=(-1)^{p|\alpha|}\xi_{Y}\circ F(\alpha)$. An even natural transformation of superfunctors is thus just an ordinary natural transformation, where every morphism is even. All functors appearing will be superfunctors, thus all natural transformations appearing are considered as natural transformations of superfunctors.
The space $\Nat(F,G)$ of natural transformation of superfunctors $F\Rightarrow G$ is thus $\mF_2$-graded.

In the following three paragraphs we recall some standard manipulations of natural transformations. For ease of reading we leave out the categories on which the various functors are defined, as it should be clear from context.

For a functor~$F$ and a natural transformation~$\xi:G_1\Rightarrow G_2$, we denote by~$F(\xi): F\circ G_1\Rightarrow F\circ G_2$ the natural transformation given by~$F(\xi)_X=F(\xi_X)$. The natural transformation~$\xi_F:G_1\circ F\Rightarrow G_2\circ F$ is defined as $(\xi_F)_X=\xi_{FX}$. 

For two natural transformations $\xi_1:F_1\Rightarrow G_1$ and~$\xi_2: F_2\Rightarrow G_2$, we denote the horizontal composition, or \emph{Godement product}, by~$\xi_1\star \xi_2: F_1\circ F_2\Rightarrow G_1\circ G_2$, which is the natural transformation~$G_1(\xi_2)\circ(\xi_{1})_{F_2}=(\xi_1)_{G_2}\circ F_1(\xi_2)$.

For two natural transformations $\xi_1:F\Rightarrow G$ and~$\xi_2:G\Rightarrow H$, we denote the vertical composition by~$\xi_2\circ\xi_1:F\Rightarrow H$, this is the natural transformation defined by~$(\xi_2\circ\xi_1)_X=(\xi_2)_X\circ (\xi_1)_X$.

\subsubsection{$\Ob$-kernel of a functor} We say that a functor is {\em essentially surjective} if any object in the target category is isomorphic to one in the image. The Ob-kernel of a functor is the full subcategory of the source category of all objects which are sent to zero. A functor is {\em essentially injective} if it has trivial Ob-kernel. A functor is {\em essentially bijective} if it is both essentially injective and surjective.

\subsubsection{Monoidal supercategories}
A {\em strict monoidal supercategory} is a supercategory~$\cC$ equipped with a superfunctor 
$\cC\boxtimes\cC\to\cC$ denoted by~$-\otimes-$, and a unit object ${\mathbbm{1}}_{\cC}={\mathbbm{1}}$, such that we have equalities of functors~$\unit\otimes-=\Id=-\otimes\unit$ and
 $(-\otimes-)\otimes -=-\otimes(-\otimes -)$. When we omit `strict', these equalities are replaced by three even natural isomorphisms, satisfying the ordinary (since they are all even) coherence conditions, {\it i.e.} the commuting pentagon and triangle diagram.
 
 A {\em braiding} $B$ in a monoidal supercategory is a family of even isomorphisms in~$\cC$
 $$\{B_{X,Y}:\;X\otimes Y\;\to\; Y\otimes X\,|\,X,Y\in\Ob\cC\},$$
 such that~$B_{X',Y'}\circ (f\otimes g)=(-1)^{|f||g|}(g\otimes f)\circ B_{X,Y}$ for any two morphisms $f:X\to X'$ and~$g:Y\to Y'$ and the usual commutative diagrams for~$B_{X,Y\otimes Z}$ and~$B_{X\otimes Y,Z}$ hold true. If $B_{X,Y}\circ B_{Y,X}=1_{Y\otimes X}$ the braiding is {\em symmetric}.

 
 \subsubsection{}For two monoidal supercategories~$\cC_1$ and~$\cC_2$, a {\em monoidal superfunctor} is a superfunctor~$F:\cC_1\to \cC_2$ with an even natural isomorphism $c: (F-)\otimes (F-)\Rightarrow F\circ(-\otimes-)$ and an even isomorphism $i:\unit_{\cC_2}\to F(\unit_{\cC_1})$ satisfying the ordinary (because again all morphisms are even) commuting diagrams with the natural isomorphisms of the monoidal structure on~$\cC_1$ and $\cC_2$.

\subsection{The periplectic Brauer category}\label{ACat}
The {\em periplectic Brauer category} $\cA$, was introduced as the category~$\cB(0,-1)$ in~\cite{Kujawa}, see also~\cite{Vera, supercategory, PB1}. It is a small skeletal supercategory with~$\Ob\cA=\mN$. Note that in~\cite{Kujawa}, contravariant composition of morphisms is used, contrary to \cite{Vera, supercategory, PB1}. We thus actually have $\cA=\cB(0,-1)^{\op}$.

\subsubsection{Brauer diagrams}\label{SSBrdiag}
The vector space $\Hom_{\cA}(i,j)$ is zero unless~$i+j$ is even. Furthermore, the graded vectorspace $\Hom_{\cA}(i,j)$ is purely even, resp. purely odd, if $(i-j)/2$ is even, resp. odd. 
The vector space $\Hom_{\cA}(i,j)$ is spanned by {\em $(i,j)$-Brauer diagrams}. These diagrams correspond to all partitions of a set of~$i+j$ dots into pairs. They are graphically represented by~$i$ dots on a horizontal line and~$j$ dots on a second horizontal line, above the first one. The Brauer diagram then consists of~$(i+j)/2$ lines, connecting the dots belonging to the same pair. An example of a $(6,8)$-Brauer diagram is given below.
$$
\begin{tikzpicture}[scale=1,thick,>=angle 90]
\begin{scope}[xshift=4cm]
\draw  (2.8,-0.5) -- +(0,1.5);
\draw (4,-0.5) to [out=90, in=180] +(0.3,0.3);
\draw (4.6,-0.5) to [out=90, in=0] +(-0.3,0.3);

\draw (3.4,-0.5) to [out=90, in=180] +(0.9,0.6);
\draw (5.2,-0.5) to [out=90, in=0] +(-0.9,0.6);

\draw (4,1) to [out=-90, in=180] +(0.3,-0.3);
\draw (4.6,1) to [out=-90, in=0] +(-0.3,-0.3);

\draw (5.2,1) to [out=-90, in=-90] +(1.2,0);

\draw (5.8,-0.5) to [out=120, in=-60] +(-2.4,1.5);

\draw (5.8,1) to [out=-90, in=-90] +(1.2,0);

\end{scope}
\end{tikzpicture}
$$ The lines in Brauer diagrams which connect the lower and upper horizontal line will be  referred to as {\em propagating lines}. 

The composition~$d_1\circ d_2$ of an $(i,j)$-diagram $d_1$ and a $(k,l)$-diagram $d_2$ is zero unless~$i=l$. When~$i=l$ we identify the dots on the upper line of~$d_2$ with those on the lower line of~$d_1$, creating another diagram. If this diagram contains loops, we have $d_1\circ d_2=0$. If it does not contain loops we obtain a $(k,j)$-Brauer diagram. Then~$d_1\circ d_2$ is equal to that diagram {\em up to a possible minus sign}. The rules for computing this minus sign were obtained in~\cite{Kujawa}. Note that {\it op. cit.} works with marked Brauer diagrams, whereas we follow the slightly different point of view that the homomorphisms are ordinary diagrams and their composition is to be determined by introducing the marking, see~\cite{PB1}.
The identity morphism of~$i\in\Ob\cA$ is the diagram with~$i$ non-crossing propagating lines, which we will denote by~$e_i^\ast$.

\subsubsection{Strict monoidal supercategory} \label{anti}
It is proved in~\cite[Theorem~3.2.1]{Kujawa}, see also \cite[Example~1.5(iii)]{supercategory}, that~$\cA$ is a strict monoidal supercategory. 
The superfunctor 
$$-\otimes- : \; \cA\,\boxtimes\,\cA\;\to\;\cA$$
satisfies~$i\otimes j =i+j$ for any $i,j\in\mN=\Ob\cA$. In particular, $\unit=0\in\Ob\cA$. Now we define the action of~$-\otimes-$ on morphisms. For any Brauer diagram $d$, we have that~$d\otimes e_i^\ast$, resp. $e_i^\ast \otimes d$, is the Brauer diagram obtained by adding $i$ propagating lines to the right, resp. the left, of~$d$. Now take an $(i,j)$-Brauer diagram $d_1$ and a $(k,l)$-Brauer diagram $d_2$. Then we set
$$d_1\otimes d_2\;=\; (d_1\otimes e_l^\ast)\circ (e_i^\ast\otimes d_2).$$ Thus $d_1\otimes d_2$ is again a diagram, up to a possible minus sign. The monoidal supercategory~$\cA$ is symmetric, with braiding morphisms $B_{i,j}: i\otimes j\to j\otimes i$ given in~\cite[Section~3.1]{Kujawa}.

By~\cite[Theorem~3.2.1]{Kujawa}, the monoidal supercategory~$\cA$ is generated by four morphisms:
\begin{enumerate}
\item $\II=e_1^\ast$, the identity morphism of~$1\in\Ob \cA$, represented by a straight line;
\item $\XX$, the crossing in~$\End_{\cA}(2)$; 
\item $\cup$, the unique diagram in~$\Hom_{\cA}(0,2)$; and
\item $\cap$, the unique diagram in~$\Hom_{\cA}(2,0)$.
\end{enumerate}

 \subsubsection{The periplectic Brauer algebra}\label{defAr}
The algebras in~\cite{Moon} are obtained as the endomorphism algebras in~$\cA$. We define the {\em periplectic Brauer algebra} as
$$A_r:=\End_{\cA}(r),\qquad\mbox{for $r\in \mN$}.$$
Note that the~$A_r$ are ordinary algebras with trivial $\mF_2$-grading, since all elements are even as noted in \ref{SSBrdiag}. The algebra~$A_r$ is for instance generated by the
elements 
$$s_i:=\II^{\otimes i-1}\otimes\XX\otimes\II^{\otimes  r-i-1}\quad\mbox{and}\quad \;\epsilon_i:= \II^{\otimes i-1}\otimes (\cup\circ \cap)\otimes\II^{\otimes  r-i-1},\quad\mbox{for~$1\le i<r$.}$$
 The subalgebra generated by~$\{s_i\}$ is precisely the symmetric group algebra~$\mk\mS_r$. The other relations are given in~\cite[Section~2]{Moon}.

From the monoidal structure on~$\cA$ we get an embedding of algebras
\begin{equation}\label{embedrs}A_r\otimes A_s\;\hookrightarrow\; A_{r+s}.\end{equation}
The embedding of~$A_r\otimes A_1=A_r\otimes\mk\II$ in~$A_{r+1}$ will simply be denoted by~$A_r\hookrightarrow A_{r+1}$. 

By \cite[Theorem~4.3.1]{Kujawa} or \cite[Theorem~1]{PB1}, the isoclasses of simple modules over $A_r$, with~$r\in\mN$, are in one-to-one correspondence with the following subset of~$\PA$:
\begin{equation}\label{eqLambdas}\Lambda_r:=\{\lambda\vdash j\;|\;j\in\JJJ^0(r) \}.\end{equation}
We denote the projective cover in~$A_r$-mod of the simple module~$L_r(\lambda)$, with~$\lambda\in\Lambda_r$, by~$P_r(\lambda)$. When~$\lambda\in\PA\backslash \Lambda_r$, we set~$L_r(\lambda)=P_r(\lambda)=0$.

\subsubsection{Cell modules}\label{SecCell} For~$r\in\mN$, we set
$\LL_r:=\{\lambda\vdash j\;|\;j\in\JJJ(r)\}.$
For any $\mu\in\LL_r$, the cell module~$W_r(\mu)$ was introduced in~\cite[Section~4]{PB1}. When~$\mu\in\PA\backslash \LL_r$, we set~$W_r(\mu)=0$.
 We use these modules to introduce a matrix $\cc$.
For~$\lambda,\mu\in\PA$, take an arbitrary~$r\in\mN$ with~$\lambda\in\Lambda_r$ and set
$$\cc_{\lambda\mu}\;:=\; [W_r(\mu):L_r(\lambda)].$$
The result in~\cite[Theorem~1]{PB2} shows in particular that the definition of~$\cc$ does not depend on the specific choice of~$r$. Furthermore, we have 
$$\cc_{\lambda\mu}:=\begin{cases}1&\mbox{if $\mu\subseteq \lambda$ and~$\lambda/\mu\in\Gamma$,}\\0&\mbox{otherwise.}\end{cases}$$
Here $\Gamma$ is a set of skew Young diagrams introduced in~\cite[Section~3]{PB2}. In particular, we have $\cc_{\lambda\lambda}=1$.
Since~$\cc_{\lambda\lambda}=1$ and~$\cc_{\lambda\mu}=0$ unless~$\mu\subseteq\lambda$, it is possible to construct a matrix $\cc^{\minus 1}$, such that
$\cc^{\minus 1}_{\lambda\lambda}=1$, $\cc^{\minus 1}_{\lambda\mu}=0$ unless~$\mu\subseteq\lambda$, and
$$\sum_{\mu}\cc_{\lambda\mu}\cc^{\minus 1}_{\mu\nu}\;=\;\delta_{\lambda\nu}\qquad\mbox{and}\qquad \sum_{\mu}\cc^{\minus 1}_{\lambda\mu}\cc_{\mu\nu}\;=\;\delta_{\lambda\nu},\qquad\mbox{for all $\lambda,\nu\in\PA$}.$$
Note that both summations are actually finite, by the lower diagonal structures of the matrices.

\subsubsection{Primitive idempotents and projective modules} \label{FixIdem}
Take an arbitrary partition~$\lambda$. We {\em fix for the remainder of the paper a primitive idempotent~$e_\lambda$ in~$A_j$ with~$j:=|\lambda|$}, according to the labelling in equation~\eqref{eqLambdas}. Hence we have $P_j(\lambda)\cong A_je_\lambda$. Examples of the idempotents are $e_{\varnothing}$, which is the identity element in~$\End_{\cA}(0)$, and~$e_{\Box}=\II$.

In \cite[Section~3]{PB1}, the algebra
$$C_r\;:=\;\bigoplus_{i,j\in\JJJ(r)}\Hom_{\cA}(i,j),$$
was introduced. By construction, we have $A_j\cong e_j^\ast C_r e_j^\ast$ for any $j\in\JJJ(r)$, which allows to interpret~$e_\lambda$ as an idempotent in~$C_r$ if $|\lambda|=j\in\JJJ(r)$.
By~\cite[Lemma~4.6.2]{PB1}, we have
\begin{equation}\label{ProjC}P_r(\lambda)\;\cong\; e_r^\ast C_r e_\lambda\;\cong\;\Hom_{\cA}(j,r) e_\lambda,\qquad\mbox{for all $\lambda\vdash j\in\JJJ^0(r)$}.\end{equation}

\subsubsection{Restriction and induction}
The embedding $A_r\hookrightarrow A_{r+1}$ of \ref{defAr} yields functors
$$\res_r: \; A_r\mbox{-mod}\to A_{r\,\minus\,1}\mbox{-mod}\qquad\mbox{and}\qquad\ind_r=A_{r+1}\otimes_{A_r}-: \; A_r\mbox{-mod}\to A_{r+1}\mbox{-mod}.$$
We introduce the symmetric matrix $\bb$ as
$$\bb_{\lambda\mu}\;=\; \begin{cases} 1&\mbox{if $\mu=\lambda\boxplus i$ or~$\mu=\lambda\boxminus i$, for some $i\in\mZ$,}\\0&\mbox{otherwise.}\end{cases}$$
By \cite[Corollary~5.2.4]{PB1}, $\res_r W_r(\mu)$ (for all $\mu\in\PA$ and~$r\in\mN$ such that~$r-|\mu|\in2\mZ_{>0}$) has a filtration with composition factors given by cell modules of~$A_{r\,\minus\,1}$ and multiplicities
\begin{equation}\label{ResEq}(\res_r W_r(\mu):W_{r\,\minus\,1}(\lambda))=\bb_{\lambda\mu},\quad\mbox{for all $\lambda\in\PA$.}\end{equation}
Note that multiplicities in cell filtrations of arbitrary~$A_r$-modules are actually independent of the chosen filtration, if $r\not\in\{2,4\}$, by \cite[Theorem~4.1.2(3)]{PB1}. 

\subsubsection{Jucys-Murphy elements}\label{JMel}
The Jucys-Murphy elements for~$A_r$ were introduced in~\cite[Section~6]{PB1}. The element $x_r\in A_r$ commutes with the subalgebra~$A_{r\,\minus\,1}$, by \cite[Lemma~6.1.2]{PB1}. We interpret~$x_r$ also as an element of~$A_s$ for any $r\ge s$, although $x_r\otimes e^\ast_{s-r}$ would be more precise. By definition, we have $x_1=0$.

We thus have an action of~$x_r$ on~$\res_rM$, for an $A_r$-module~$M$, which commutes with the~$A_{r\,\minus\,1}$-action.
In \cite[Section~2]{PB2}, we introduced the notation~$M_q$ for the generalised $q$-eigenspace for~$x_r$. We have $\res_rM=\oplus_{q\in\mZ}M_q$ as $A_{r\,\minus\,1}$-modules. 
For any $q\in\mZ$ and~$\lambda,\mu\in\PA$, we set
$$\bb^q_{\lambda\mu}=\begin{cases}1&\mbox{if $\mu=\lambda\boxplus  q$, or $\mu=\lambda\boxminus (q-1)$,}\\
0&\mbox{otherwise.}\end{cases}$$
Clearly, we have $\bb=\sum_{q\in\mZ}\bb^q$. By \cite[Proposition~2.12]{PB2}, we can refine~\eqref{ResEq} to
\begin{equation}\label{eqbq}(W_r(\mu)_{q}:W_{r\,\minus\,1}(\lambda))=\bb^q_{\lambda\mu}.\end{equation}

\subsection{The periplectic Lie superalgebra}\label{Secpn}

For each $n\in\mZ_{>0}$, the periplectic Lie superalgebra~$\pe(n)$ is the subalgebra of the general linear superalgebra~$\mathfrak{gl}(n|n)$, which preserves an odd bilinear form $\beta:V\times V\to\mk$, see~\cite{gang, Chen, PB1, Kujawa, Moon, Musson}, with~$V:= \mk^{n|n}$. Concretely,
$$\mathfrak{pe}(n)\;=\;\{X\in\mathfrak{gl}(n|n)\;|\;\beta(Xv,w)+(-1)^{|X||v|}\beta(v,Xw)=0,\quad\mbox{for all }\;v,w\in V\}.$$

\subsubsection{The supercategory $s\hspace{-0.3mm}\cF_n$ of integrable modules over $\pe(n)$}

We consider the category~$s\cF_n$ which has as objects all $\mF_2$-graded, finite dimensional, integrable, left $\pe(n)$-modules, see~\cite[Section~2]{gang}.
The morphism spaces consist of all $\mathfrak{pe}(n)$-linear morphisms of (ungraded) $\mk$-vector spaces.
 The morphism spaces are thus naturally $\mF_2$-graded vectorspaces. The category~$\Fn$ is a supercategory. 
  By `$\mathfrak{pe}(n)$-module' we will henceforth mean `object in~$\Fn$'.

 Note that there is a central element $H\in\mathfrak{pe}(n)_{\oa}\cong\mathfrak{gl}(n)$, whose adjoint action is diagonisable on~$\mathfrak{pe}(n)_{\ob}$ with eigenvalues $\pm 1$. This allows to equip any weight module $M$ with a $\mF_2$-grading. For instance, we can set $M_{\oa}$, resp. $M_{\ob}$, equal to the sum of all weight spaces for weights $\lambda$ such that~$\lambda(H)$ is even, resp. odd.
 It then follows easily that~$\Fn$ is abelian.
  
 In order to be compatible with \cite{Kujawa}, we will think of morphisms as `acting from the right' and denote by 
 $$\Hom_{\Fn}(M,N)\;=\;\Hom_{\pe(n)}(N,M),$$ the space of $\pe(n)$-linear morphisms $N\to M$.
Hence we write $(v)f$, for $v\in N$ and $f\in \Hom_{\Fn}(M,N)$. For $f\in \Hom_{\Fn}(M,N)$ and $g\in \Hom_{\Fn}(K,M)$, the morphism $f\circ g\in \Hom_{\Fn}(K,N)$ is given by
$$(v)(f\circ g)=((v)f)g,\qquad\mbox{for $v\in N$.}$$

 \subsubsection{Monoidal structure}

For~$\mathfrak{pe}(n)$-modules $M,N$, the tensor product $M\otimes N=M\otimes_{\mk} N$ is an object in~$\Fn$, with action given by
$$X(v\otimes w)\;=\;Xv\otimes w\;+\;(-1)^{|X||v|}v\otimes Xw,\qquad\mbox{for all }\;X\in\pe(n).$$
For~$f\in \Hom_{\pe(n)}(M_1,M_2)$ and~$g\in \Hom_{\pe(n)}(N_1,N_2)$, the morphism $f\otimes g$ defined as
$$(v\otimes w)(f\otimes g)\;=\; (-1)^{|f||w|} (v)f\otimes (w)g$$
is $\mathfrak{pe}(n)$-linear.
With this rule, equation~\eqref{superComp} is satisfied and
and $\Fn$ is a monoidal supercategory for~$-\otimes-$.


\section{The periplectic Deligne category}\label{SecDelCat}

\subsection{Construction}
The category~$\DA$, which we will define as the pseudo-abelian envelope of~$\cA$, was denoted by~${\rm \underline{Rep}}\,P$ in~\cite[Section~4.5]{Vera} and by~$\widehat{\cB}(0,\minus1)$ in~\cite[Section~5]{Kujawa}. It is the periplectic analogue of the categories~${\rm \underline{Rep}}\,GL_\delta$ and~${\rm \underline{Rep}}\,O_\delta$ introduced by Deligne in~\cite{Deligne}.

\subsubsection{}The periplectic Brauer category~$\cA$ is $\mk$-linear, so in particular pre-additive. It thus admits a unique (up to equivalence) additive envelope. We define such a supercategory~$\overline{\cA}$ which has as objects finite multisets of elements in~$\mN=\Ob\cA$.
For such a multiset $S$, the corresponding objet of $\overline{\cA}$ is denoted by~$\bigoplus_{r\in S}r$. Morphisms in~$\overline{\cA}$ are matrices with entries morphisms in~$\cA$. By construction, $\overline{\cA}$ is still skeletal. It is an additive category, with biproducts given by
$$\left(\bigoplus_{r\in S}r\right)\oplus \left(\bigoplus_{r\in S'}r\right)\;=\;\bigoplus_{r\in S\sqcup S'}r.$$ The category~$\overline{\cA}$ inherits a structure of a symmetric strict monoidal supercategory from its subcategory~$\cA$, with~$-\otimes-$ extended as a bi-additive functor. 

\subsubsection{} The additive category~$\overline{\cA}$ admits a unique (up to equivalence) Karoubi envelope. We define~$\DA$ with objects all pairs $(X,e)$ with~$X\in\Ob\overline{\cA}$ and~$e$ an idempotent in~$\End_{\overline{\cA}}(X)$. Morphism superspaces in~$\DA$ are given by
\begin{equation}\label{MorDeli}\Hom_{\DA}((X,e),(Y,f))\;=\; \{\alpha\in \Hom_{\overline{\cA}}(X,Y)\,|\, \alpha=f\circ \alpha\circ e\}\;=\; f\Hom_{\overline{\cA}}(X,Y)e.\end{equation}
Since~$\DA$ is karoubian, additive and~$\mk$-linear with finite dimensional endomorphism algebras, it is Krull-Schmidt. It also inherits naturally from its subcategory~$\overline{\cA}$ the structure of a symmetric monoidal supercategory,  with~$-\otimes-$ a bi-additive functor. For~$i,j\in \mN=\Ob\cA\subset\Ob\overline{\cA}$ and idempotents~$e\in A_i$ and~$f\in A_j$, we have for instance
\begin{equation}\label{eqtens}(j,f)\otimes (i,e)\;=\;(j+i, f\otimes e),\end{equation}
with~$f\otimes e$ interpreted as an element in~$A_{j+i}$ as in \eqref{embedrs}. 
\begin{rem} Consider the category $\cS:=\bigoplus_{i\in\mN}\mk\mS_i$-mod.
Since, $\charr(\mk)=0$, $\cS$ is a pseudo-abelian envelope of the~$\mk$-linear subcategory~$\cC$ of $\cA$ with objects~$\mN$, $\Hom_{\cC}(i,j)=0$ if $i\not=j$ and~$\End_{\cC}(i)=\mk\mS_i$.
 It is in this spirit that our categorical representation on~$\DA$ is an analogue of the one for~$\mathfrak{sl}_{\infty}$ on~$\cS$ in~\cite{LLT, Oded}.
\end{rem}

\subsection{Indecomposable objects and blocks}

For any $\lambda\in\PA$, we set
$$R(\lambda):= (|\lambda|, e_\lambda)\;\in\,\DA,$$
with~$e_\lambda$ the primitive idempotent in~$A_{|\lambda|}$ of \ref{FixIdem}. In particular, $R(\varnothing)=\unit$ and~$R(\Box)=(1,\II)$.

\begin{thm}\label{ThmClass}
The assignment $\lambda\mapsto R(\lambda)$ gives a bijection between $\PA$ and the set of isomorphism classes of
non-zero indecomposable objects in~$\DA$.
\end{thm}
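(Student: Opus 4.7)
The plan is to prove three statements in turn: each $R(\lambda)$ is indecomposable; every nonzero indecomposable in $\DA$ is isomorphic to some $R(\lambda)$; and $R(\lambda)\not\cong R(\mu)$ when $\lambda\neq \mu$. The first is immediate: by \eqref{MorDeli} we have $\End_{\DA}(R(\lambda))=e_\lambda A_{|\lambda|}e_\lambda$, which is local because $e_\lambda$ is a primitive idempotent in $A_{|\lambda|}$ by the choice fixed in \ref{FixIdem}; hence $R(\lambda)$ is a nonzero indecomposable object.

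For the second, $\DA$ is Krull--Schmidt (Karoubian, additive, $\mk$-linear with finite-dimensional endomorphism algebras) and is the Karoubi envelope of $\overline{\cA}$, so any nonzero indecomposable is a direct summand of some biproduct $\bigoplus_{r\in S}r\in\Ob\overline{\cA}$, and hence of a single $r\in\mN$ by Krull--Schmidt. Indecomposable summands of $r$ in $\DA$ biject with conjugacy classes of primitive idempotents in $A_r=\End_{\DA}(r)$, which are indexed by $\Lambda_r$ via \cite[Theorem~4.3.1]{Kujawa} (equivalently \cite[Theorem~1]{PB1}). Fixing a primitive representative $e_\lambda^{(r)}\in A_r$ for each $\lambda\in \Lambda_r$, I reduce surjectivity to the key identification $(r,e_\lambda^{(r)})\cong R(\lambda)$ in $\DA$. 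To prove it, equation \eqref{ProjC} supplies an isomorphism $\phi:A_re_\lambda^{(r)}=P_r(\lambda)\to\Hom_\cA(|\lambda|,r)e_\lambda$ of left $A_r$-modules; its value $a:=\phi(e_\lambda^{(r)})\in e_\lambda^{(r)}\Hom_\cA(|\lambda|,r)e_\lambda$ is a morphism $R(\lambda)\to(r,e_\lambda^{(r)})$ in $\DA$ via \eqref{MorDeli}. The pivotal/Frobenius structure of $\cA$ (the cups and caps make each object self-dual) furnishes a candidate morphism $b\in e_\lambda \Hom_\cA(r,|\lambda|)e_\lambda^{(r)}$ in the opposite direction; since both $e_\lambda^{(r)}A_re_\lambda^{(r)}$ and $e_\lambda A_{|\lambda|}e_\lambda$ are local algebras, once $ab\neq 0$ is established it is automatically a unit, I rescale $b$ to achieve $ab=e_\lambda^{(r)}$, and the companion $ba$ is then a nonzero idempotent in $e_\lambda A_{|\lambda|}e_\lambda$ forced to equal $e_\lambda$.

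For injectivity, if $|\lambda|$ and $|\mu|$ have different parity then $\Hom_\cA(|\lambda|,|\mu|)=0$ by \ref{SSBrdiag}; if $\lambda,\mu$ are nonempty of the same parity, I pick $r$ large with $\lambda,\mu\in \Lambda_r$ and the identification above reduces the problem to the distinctness of labels in $\Lambda_r$; the corner case $\mu=\varnothing$ with $|\lambda|>0$ even is ruled out because any composition of a morphism $|\lambda|\to 0$ with one $0\to|\lambda|$ in $\cA$ contains a closed loop, which evaluates to zero in the periplectic Brauer category. The main obstacle is establishing the nonvanishing $ab\neq 0$ in the second paragraph: the one-sided information from \eqref{ProjC} provides only $a$, and producing $b$ with nontrivial composition requires exploiting the non-degeneracy of the trace pairing arising from the Frobenius structure on $\cA$, or equivalently working in the enlarged algebra $C_r$ where $e_\lambda$ and $e_\lambda^{(r)}$ appear as primitive idempotents projecting onto a common simple factor.
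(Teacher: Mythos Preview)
Your overall strategy matches the paper's, but the key step in surjectivity has a genuine gap that you yourself flag and do not close: the nonvanishing $ab\neq 0$. Your first suggested route, via a ``Frobenius/pivotal trace pairing'' on $\cA$, cannot work here. The closed loop in $\cA$ evaluates to zero (this is precisely why the relevant Temperley--Lieb parameter is $0$), so the categorical trace on $\End_\cA(r)$ is identically zero and the pairing $\Hom_\cA(i,j)\otimes\Hom_\cA(j,i)\to\mk$ is completely degenerate. There is no nondegeneracy to invoke, and self-duality via $\cup,\cap$ alone does not manufacture a $b$ with $ab\neq 0$.

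Your second suggestion, working in $C_r$, is essentially what the paper does, but it has to be executed. The paper avoids the nondegeneracy issue entirely by first taking explicit $a\in\Hom_\cA(|\mu|,r)$ and $b\in\Hom_\cA(r,|\mu|)$ with $ba=e^*_{|\mu|}$ (this is \cite[4.2.1]{PB1}); then $\bar e_\mu:=a e_\mu b$ is an idempotent in $A_r$, and $x=e_\mu b$, $y=a e_\mu$ are mutually inverse morphisms between $R(\mu)$ and $(r,\bar e_\mu)$ by direct computation. The remaining isomorphism $(r,\bar e_\mu)\cong(r,e)$ then follows because $A_r\bar e_\mu\cong P_r(\mu)\cong A_r e$ as left $A_r$-modules, using \eqref{ProjC}. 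The explicit splitting $ba=e^*_{|\mu|}$ is what replaces your missing nondegeneracy. An equivalent abstract phrasing: the idempotent $e_r^*\in C_r$ is full, so $A_r$ and $C_r$ are Morita equivalent and $e_r^*C_r e_\lambda\cong e_r^*C_r e_\lambda^{(r)}$ forces $C_r e_\lambda\cong C_r e_\lambda^{(r)}$, which immediately yields the required mutually inverse morphisms in $\DA$; but that fullness is again input from \cite{PB1}, not something furnished by rigidity of $\cA$.

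Your injectivity argument is fine in the corner case (the composition through $0$ is a union of closed loops, hence zero), but your generic case relies on the identification $(r,e_\lambda^{(r)})\cong R(\lambda)$ and so inherits the gap above. The paper handles injectivity uniformly by passing to $C_r$-modules: an isomorphism $R(\lambda)\cong R(\mu)$ would give $C_r e_\lambda\cong C_r e_\mu$, contradicting the classification in \cite[Section~3]{PB1}.
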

\begin{proof}
Let~$X$ be an arbitrary non-zero indecomposable object in~$\DA$. Clearly $X$ is isomorphic to~$(r,e)$ for some~$r\in \mN=\Ob\cA\subset\Ob\overline{\cA}$ and~$e$ a primitive idempotent in~$A_r=\End_{\overline\cA}(r)$.  If $r=0$, then~$X=R(\varnothing)$, so we can assume $r>0$.
Let~$\mu$ be the partition of~$r-2i\in\JJJ^0(r)$ such that~$A_re\cong P_r(\mu)$. We will show in two steps that~$R(\mu)\cong X$.

By \cite[4.2.1]{PB1}, there exist $a\in\Hom_{\cA}(r-2i,r)$ and~$b\in\Hom_{\cA}(r,r-2i)$ such that~$ba=e^\ast_{r-2i}$. Consequently, $\overline{e}_\mu:= a e_\mu b $ is an idempotent in~$A_r$. We define, using~\eqref{MorDeli},
\begin{eqnarray*}
x:=e_\mu b=e_{\mu}b\overline{e}_{\mu}&\in& e_\mu\Hom_{\cA}(r,r-2i)\overline{e}_\mu =\Hom_{\DA}((r, \overline{e}_\mu),R(\mu))\quad\mbox{and}\\
 y:=ae_\mu=\overline{e}_\mu a e_\mu&\in& \overline{e}_\mu\Hom_{\cA}(r-2i,r)e_\mu= \Hom_{\DA}(R(\mu),(r,\overline{e}_\mu)).
 \end{eqnarray*}
Since~$xy=e_\mu$ and~$yx=\overline{e}_\mu$, the identity morphisms of~$R(\mu)$ and~$(r,\overline{e}_\mu)$, we have~$R(\mu)\cong (r,\overline{e}_\mu)$.

By equation~\eqref{ProjC} and the properties of~$a$ and~$b$, we have isomorphisms of left $A_r$-modules:
$$A_re\;\cong\;e_r^\ast C_re_\mu\;\cong\; A_r \overline{e}_\mu$$
This means that there exist $\alpha\in  e A_r\overline{e}_\mu$ and~$\beta\in \overline{e}_\mu A_re$, corresponding to the mutual inverses in 
$$e\Hom_{\cA}(r,r)\overline{e}_\mu \;=\;\Hom_{\DA}((r,\overline{e}_\mu), (r,e))\quad\mbox{and}\quad \overline{e}_\mu\Hom_{\cA}(r,r)e \;=\;\Hom_{\DA}((r,e), (r,\overline{e}_\mu)).$$
Hence~$(r,e)\cong(r,\overline{e}_\mu)\cong R(\mu)$, so we find that any indecomposable object in~$\DA$ is isomorphic to some~$R(\lambda)$.

Now assume that for~$\lambda\not=\mu$ we have $R(\mu)\cong R(\lambda)$. The corresponding isomorphism which must exist in~$e_\mu\Hom_{\cA}(t,s)e_\lambda$ with~$t=|\lambda|$ and~$s=|\mu|$ implies that~$t-s$ is even and that~$C_re_\lambda\cong C_re_\mu$ in~$C_r$-mod, for~$r$ such that~$s,t\in\JJJ(r)$. This is contradicted by \cite[Section~3]{PB1}.\end{proof}

\begin{rem}\label{RemThm}
The proof of Theorem~\ref{ThmClass} implies that for an arbitrary primitive idempotent $e\in A_r$, we have $R(\lambda)\;\cong (r,e)$ if and only if $A_re\cong P_r(\lambda)$.
\end{rem}

\begin{cor}
When neglecting the monoidal structure, we have an equivalence of categories 
$$\DA\;\cong\;\bigoplus_{k\in\mN}\DA[k],$$
with~$\DA[k]$ the full additive, indecomposable, subcategory of $\DA$ containing all $R(\lambda)$ where the 2-core of $\lambda$ is~$\partial^k$.
\end{cor}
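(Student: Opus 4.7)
The plan is to reduce the corollary to two independent claims: (a) for partitions $\lambda, \mu \in \PA$ with different 2-cores, $\Hom_{\DA}(R(\lambda), R(\mu)) = 0$; and (b) each $\DA[k]$ is itself indecomposable as an additive category. Once (a) and (b) are in hand, the asserted equivalence follows formally from Theorem~\ref{ThmClass} together with the Krull--Schmidt property of $\DA$: every object is a finite direct sum of indecomposables $R(\lambda)$, and these summands can be sorted according to the 2-core $\partial^{k(\lambda)}$ of each $\lambda$, which partitions $\PA$ into the disjoint pieces $\PA^k$.

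For claim (a), I would set $j=|\lambda|$ and $i=|\mu|$, and first dispose of the case $i+j$ odd using the parity rules of \ref{SSBrdiag}. In the even case, I would pick $r\in\mN$ large enough that $i,j\in\JJJ(r)$, and use \eqref{MorDeli} together with the definition of $C_r$ in \ref{FixIdem} to rewrite
$$\Hom_{\DA}(R(\lambda),R(\mu))\;=\;e_\mu\,\Hom_\cA(j,i)\,e_\lambda\;=\;e_\mu C_r e_\lambda.$$
The decisive input is then the block classification of the periplectic Brauer algebra from \cite{PB1}: the blocks of $A_r$, and hence via the identification \eqref{ProjC} of $P_r(\nu)$ with $e_r^\ast C_r e_\nu$ also those of $C_r$, are labeled by 2-cores, with $e_\lambda$ sitting in the block indexed by $\partial^{k(\lambda)}$. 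Since $\lambda$ and $\mu$ have distinct 2-cores, the idempotents $e_\lambda$ and $e_\mu$ are annihilated by disjoint central idempotents of $C_r$, forcing $e_\mu C_r e_\lambda=0$.

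For claim (b), it suffices to exhibit, for any two $\lambda,\mu\in\PA^k$, a zig-zag of non-zero morphisms in $\DA[k]$ connecting $R(\lambda)$ to $R(\mu)$. I would again choose $r$ large enough that $|\lambda|,|\mu|\in\JJJ^0(r)$, and invoke the fact that the block of $A_r$ (equivalently of $C_r$) attached to $\partial^k$ is itself an indecomposable algebra; this supplies a chain of projectives $P_r(\nu_0),\ldots,P_r(\nu_t)$, all with 2-core $\partial^k$, in which consecutive pairs admit non-zero Hom, and via \eqref{ProjC} this lifts to a chain of non-zero morphisms between the $R(\nu_s)$ inside $\DA[k]$. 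The main obstacle I anticipate is not mathematical but organisational: one must reconcile the block structures of $A_r$ and of the larger algebra $C_r$, and verify that choosing a still larger $r$ does not destroy the correspondence. I expect this step to be purely formal, but it needs to be handled carefully so that the passage to the inductive limit in $r$ is clean.
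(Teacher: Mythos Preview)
Your proposal is correct and follows essentially the same strategy as the paper: reduce the block decomposition of $\DA$ to the known block decomposition of the periplectic Brauer algebras from \cite{PB1}, where blocks are labelled by 2-cores.

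There is one organisational difference worth noting. You keep $R(\lambda)$ and $R(\mu)$ at their defining levels $(|\lambda|,e_\lambda)$ and $(|\mu|,e_\mu)$ and work inside the algebra $C_r$, which then forces you to reconcile the block structures of $A_r$ and $C_r$ --- precisely the obstacle you anticipated. The paper sidesteps this by invoking Remark~\ref{RemThm} to realise both $R(\lambda)$ and $R(\mu)$ as $(r,e)$ and $(r,f)$ with $e,f$ primitive idempotents in the \emph{same} algebra $A_r$; one then has directly
\[
\Hom_{\DA}(R(\lambda),R(\mu))\;\cong\; fA_re\;\cong\;\Hom_{A_r}(P_r(\lambda),P_r(\mu)),
\]
and both vanishing (for distinct 2-cores) and indecomposability (for equal 2-cores) are read off immediately from \cite[Theorem~1]{PB1}. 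This is a minor streamlining rather than a different argument.

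One notational caution: in the paper, $\PA^k$ denotes $\PA^{\ge k}\cap\PA^{\le k}$ (partitions containing $\partial^k$ but not $\partial^{k+1}$), which is \emph{not} the set of partitions with 2-core $\partial^k$; for instance $(3,1)\in\PA^2$ has 2-core $\varnothing$. You should introduce separate notation for the 2-core classes.
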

\begin{proof}
Since $\cA$ decomposes into the coproduct of two subcategories, corresponding to the even and odd integers, we know that $\DA$ decomposes similarly. Now take two partitions $\lambda,\mu$ with~$|\lambda|-|\mu|$ even.
By Remark~\ref{RemThm}, there exists $r\in\mN$ and idempotents $e,f\in A_r$ for which
$$\Hom_{\DA}(R(\lambda),R(\mu))\;\cong\;\Hom_{\DA}((r,e),(r,f))\;\cong\; fA_re\;\cong\; \Hom_{A_r}(P_r(\lambda),P_r(\mu)).$$
The block decomposition of $\DA$ is thus inherited from the one of $A_r$ in \cite[Theorem~1]{PB1}.
\end{proof}

\subsection{The split Grothendieck group}
We let~$[\DA]_{\oplus}$ denote the split Grothendieck group of the small additive category~$\DA$, see \cite[Section~1.2]{Maz}. Concretely,  $[\DA]_{\oplus}$ is the free abelian group with elements the isomorphism classes $[X] $ of objects~$X$ in~$\DA$, modulo the relations~$[X]=[ Y]+[ Z]$, whenever $X=Y\oplus Z$.
As an immediate consequence of Theorem~\ref{ThmClass}, we thus find the following.
\begin{cor}\label{CorSplit}
The map $\Psi:\PA_{\mZ}\to [\DA]_{\oplus}$ determined by~$\lambda\mapsto [R(\lambda)]$ is a $\mZ$-module isomorphism.
\end{cor}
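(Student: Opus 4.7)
The plan is to leverage the Krull--Schmidt property of $\DA$, which was already observed right after the construction of $\DA$ (namely that $\DA$ is karoubian, additive, $\mk$-linear with finite-dimensional endomorphism algebras, hence Krull--Schmidt). In any Krull--Schmidt additive category, every object decomposes as a finite direct sum of indecomposables uniquely up to permutation and isomorphism of the summands. A standard argument then shows that the split Grothendieck group is the free abelian group on the set of isomorphism classes of nonzero indecomposable objects, with the class of an arbitrary object given by the multiset of indecomposable summands in any such decomposition.

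Given this, the verification splits into two steps. First, surjectivity of $\Psi$: any $[X] \in [\DA]_\oplus$ can be written, by choosing a decomposition $X \cong \bigoplus_i X_i$ into indecomposables, as $[X] = \sum_i [X_i]$; since Theorem~\ref{ThmClass} shows each nonzero $X_i$ is isomorphic to some $R(\lambda_i)$, we have $[X] = \Psi\!\left(\sum_i \lambda_i\right)$ (with the convention that a zero summand contributes nothing, since $[0]=0$ from $0 = 0 \oplus 0$). Second, injectivity of $\Psi$: if $\sum_\lambda n_\lambda \lambda \in \PA_\mZ$ lies in $\Ker \Psi$, split the sum according to the sign of $n_\lambda$ to get $\sum_\lambda a_\lambda [R(\lambda)] = \sum_\lambda b_\lambda [R(\lambda)]$ in $[\DA]_\oplus$ with all $a_\lambda, b_\lambda \in \mN$; this forces $\bigoplus_\lambda R(\lambda)^{\oplus a_\lambda} \cong \bigoplus_\lambda R(\lambda)^{\oplus b_\lambda}$ in $\DA$, and by uniqueness of Krull--Schmidt decompositions together with the injectivity half of Theorem~\ref{ThmClass}, we get $a_\lambda = b_\lambda$ for all $\lambda$, i.e. $n_\lambda = 0$.

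There is no substantial obstacle here: once one has Theorem~\ref{ThmClass} and the Krull--Schmidt property, the statement is formal. If one wanted a more compact write-up, one could simply cite the general fact that for a Krull--Schmidt additive category $\cC$, the split Grothendieck group $[\cC]_\oplus$ is free abelian on $\{[Y] \mid Y \text{ indecomposable}\}$, and then invoke Theorem~\ref{ThmClass} to identify this basis with $\PA$, whence $\Psi$ is an isomorphism.
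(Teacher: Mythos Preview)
Your proposal is correct and matches the paper's approach: the paper simply records the corollary as an immediate consequence of Theorem~\ref{ThmClass} together with the Krull--Schmidt property of $\DA$, and your write-up spells out exactly these standard details. The only place worth tightening is the injectivity step, where the implication from $[A]=[B]$ in $[\DA]_\oplus$ to $A\cong B$ uses cancellation, itself a consequence of Krull--Schmidt; your final ``compact'' paragraph already circumvents this by invoking directly that $[\cC]_\oplus$ is free on indecomposables.
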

In the terminology of \cite[Section~1.3]{Maz}, $(\DA,\Psi)$ is a $\mZ$-categorification of~$\PA_{\mZ}$.


\section{Tensor product with the generator}\label{SecTensT}
In this section, we study the functor~$\TT$, the endo-superfunctor of~$\DA$ given by
$$\TT(-):= -\otimes R(\Box).$$
For idempotents~$e\in A_r$, $f\in A_s$ and~$a\in f\Hom_{\cA}(r,s)e=\Hom_{\DA}((r,e),(s,f))$, we thus have
\begin{equation}\label{eqDefT}\TT(r,e)\;=\;(r+1,e\otimes \II)\qquad\mbox{and}\qquad \TT(a)=a\otimes\II, \end{equation}
by definition and equation~\eqref{eqtens}.

\subsection{The combinatorics of~$\TT$}
We use the Krull-Schmidt category~$\DA$ to define a matrix $\aac$.
\begin{ddef}\label{defa}
For all $\nu,\mu\in \PA$, we define $\aac_{\nu,\mu}\in\mN$, by
$$\TT(R(\nu))\;=\;R(\nu)\otimes R(\Box)\;\cong\;\bigoplus_{\kappa} R(\kappa)^{\oplus \aac_{\nu\kappa}}.$$
\end{ddef}
Recall the matrices $\bb$ and~$\cc$ introduced in Section~\ref{ACat}.
\begin{thm}\label{ThmTensorB}We have $\aac=\cc\,\bb\,\cc^{\minus1}$. Concretely,
for all~$\nu,\kappa\in \PA$, we have
\begin{equation}\label{eqCmat}\aac_{\nu\kappa}\;=\;\sum_{\lambda\subseteq \nu}\sum_{\mu\supseteq\kappa}\,\cc_{\nu\lambda}\,\bb_{\lambda\mu} \cc^{\minus 1}_{\mu\kappa}.\end{equation}\end{thm}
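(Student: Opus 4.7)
The plan is to translate the splitting of $R(\nu)\otimes R(\Box)$ into a decomposition problem for projective $A_r$-modules, and then read off the multiplicities $\aac_{\nu\kappa}$ by computing the cell-filtration multiplicities of a single induced projective in two different ways.

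First, set $r:=|\nu|+1$. By \eqref{eqDefT} we have $\TT(R(\nu))=(r,e_\nu\otimes\II)$, and by Remark~\ref{RemThm} together with Krull--Schmidt in $\DA$, $\aac_{\nu\kappa}$ equals the multiplicity of the indecomposable projective $P_r(\kappa)$ in a decomposition of $A_r(e_\nu\otimes\II)$ into projective left $A_r$-modules. Via the embedding $A_{|\nu|}\hookrightarrow A_r$ from \eqref{embedrs}, this module is identified with $\ind_{|\nu|}P_{|\nu|}(\nu)$. Hence it suffices to show
$$\big[\,\ind_{|\nu|}P_{|\nu|}(\nu)\,:\,P_r(\kappa)\,\big]_{\oplus}\;=\;(\cc\,\bb\,\cc^{\minus 1})_{\nu\kappa}.$$

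Second, I would compute $(\ind_{|\nu|}P_{|\nu|}(\nu):W_r(\mu'))$ in two ways. By Brauer--Humphreys reciprocity for the cellular algebra $A_{|\nu|}$, the projective $P_{|\nu|}(\nu)$ has a cell filtration with multiplicities $\cc_{\nu\mu}$. By a Frobenius-reciprocity calculation from \eqref{ResEq} together with the cellular tower structure of $(A_r)_{r\in\mN}$ established in \cite{PB1}, each $\ind_{|\nu|}W_{|\nu|}(\mu)$ admits a cell filtration with $W_r(\mu')$-multiplicity equal to $\bb_{\mu\mu'}$. Additivity of cell-filtration multiplicities across short exact sequences then yields
$$\big(\ind_{|\nu|}P_{|\nu|}(\nu):W_r(\mu')\big)\;=\;\sum_\mu\cc_{\nu\mu}\bb_{\mu\mu'}\;=\;(\cc\bb)_{\nu\mu'}.$$
Alternatively, decomposing $\ind_{|\nu|}P_{|\nu|}(\nu)\cong\bigoplus_\kappa P_r(\kappa)^{\oplus\aac_{\nu\kappa}}$ from the first step and applying Brauer--Humphreys reciprocity now to $A_r$ gives $\sum_\kappa\aac_{\nu\kappa}\cc_{\kappa\mu'}=(\aac\cc)_{\nu\mu'}$. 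Equating and multiplying on the right by $\cc^{\minus 1}$ yields $\aac=\cc\,\bb\,\cc^{\minus 1}$, which is precisely \eqref{eqCmat}.

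The main obstacle is ensuring that all cell-filtration multiplicities above are well-defined: as noted after \eqref{ResEq}, multiplicities in cell filtrations of arbitrary $A_r$-modules are guaranteed only when $r\notin\{2,4\}$. I would handle this as follows. Cell-filtration multiplicities of projective modules are always well-defined, since Brauer--Humphreys reciprocity recovers them as dimensions of Hom-spaces into (dual) cell modules; similarly, the multiplicity of $W_r(\mu')$ in $\ind_{|\nu|}W_{|\nu|}(\mu)$ can be pinned down by a Hom-reciprocity computation using the adjunction $(\ind_{|\nu|},\res_r)$ together with the cell filtration of $\res_rW_r(\mu')$ from \eqref{ResEq}, independently of the exceptional values of $r$. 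If a finer argument is required, the refined eigenspace identity \eqref{eqbq} allows the whole calculation to be performed one box-content $q$ at a time, which circumvents the bad values of $r$ and is also consistent with the decomposition $\TT=\bigoplus_{i\in\mZ}\TT_i$ announced in the introduction.
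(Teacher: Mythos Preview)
Your argument is correct and reaches the same identity $\aac\,\cc=\cc\,\bb$ as the paper, but via the dual route. The paper's proof never uses cell filtrations of the induced projective at all: it instead observes that
\[
\aac_{\nu\kappa}=\dim\Hom_{A_{r+1}}(\ind_rP_r(\nu),L_{r+1}(\kappa))=[\res_{r+1}L_{r+1}(\kappa):L_r(\nu)],
\]
and then computes the single number $[\res_{r+1}W_{r+1}(\lambda):L_r(\nu)]$ in two ways, once by expanding $W_{r+1}(\lambda)$ into simples (yielding $(\aac\cc)_{\nu\lambda}$) and once by using the cell filtration~\eqref{ResEq} of $\res_{r+1}W_{r+1}(\lambda)$ and then counting simples (yielding $(\cc\bb)_{\nu\lambda}$). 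Since every step involves only composition-factor multiplicities of simples, no well-definedness issues arise and the discussion in your last paragraph becomes unnecessary.

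Your approach instead computes the adjoint quantity $(\ind_{|\nu|}P_{|\nu|}(\nu):W_r(\mu'))$, which requires two extra ingredients: Brauer--Humphreys reciprocity for~$A_r$, and the existence (with the claimed multiplicities) of a cell filtration on each $\ind_{|\nu|}W_{|\nu|}(\mu)$. The first is standard for cellular algebras; the second is true here but does not follow from~\eqref{ResEq} by ``Frobenius reciprocity'' alone---it is a separate structural fact about the tower which you should cite directly from~\cite{PB1} rather than infer. What your approach buys is a slightly more conceptual picture (everything happens on the projective side), at the cost of these additional inputs and the case analysis for $r\in\{2,4\}$ that the paper's route sidesteps entirely.
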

\begin{proof}
Take $r=|\nu|$.
Equation~\eqref{eqDefT} and Remark~\ref{RemThm} imply that~$\aac_{\nu\kappa}$ is the number of times the projective $A_{r+1}$-module~$P_{r+1}(\kappa)$ appears as a direct summand of 
$$A_{r+1}(e_\nu\otimes I)\;\cong\; \ind_{r} P_r(\nu).$$
Consequently
\begin{equation}\label{aSimple}\aac_{\nu\kappa}\;=\;\dim\Hom_{A_{r+1}}(\ind_{r} P_r(\nu),L_{r+1}(\kappa))\;=\;[\res_{r+1}L_{r+1}(\kappa):L_{r}(\nu)].\end{equation}
In particular, we thus find
$$\sum_{\kappa}\aac_{\nu\kappa}\cc_{\kappa\lambda}\;=\;\sum_\kappa[W_{r+1}(\lambda):L_{r+1}(\kappa)][\res_{r+1}L_{r+1}(\kappa):L_r(\nu)]\;=\;[\res_{r+1}W_{r+1}(\lambda):L_r(\nu)].$$
On the other hand, by equation~\eqref{ResEq}, we have
$$\sum_{\mu}\cc_{\nu\mu}\bb_{\mu\lambda}\;=\;\sum_{\mu}[\res_{r+1}W_{r+1}(\lambda):W_r(\mu)][W_{r}(\mu):L_r(\nu)]\;=\;[\res_{r+1}W_{r+1}(\lambda):L_r(\nu)].$$
This shows that~$\aac=\cc\,\bb\,\cc^{\minus1}$.
\end{proof}

\begin{rem}
Equation~\eqref{aSimple} shows the explicit connection between $\TT$ on~$\DA$ and~$\res$ between the Brauer algebras. This explains the similarities between translation functors for the periplectic Lie superalgebra \cite[Corollary~4.4.6]{gang} and  the restriction functors \cite[Proposition~2.3.1]{PB2}. \end{rem}

\subsection{The natural transformation~$\xi:\TT\Rightarrow\TT$}\label{SecXi}

For an object $X=(r,e)$ in~$\DA$, we define 
$$\xi_X\in\End_{\DA}(X\otimes R(\Box))=(e\otimes \II)A_{r+1}(e\otimes \II),\qquad \mbox{as}$$
$$ \xi_X\;=\;(e\otimes \II)x_{r+1}(e\otimes \II)\;=\;(e\otimes \II)x_{r+1}\;=\; x_{r+1}(e\otimes \II),$$
with~$x_{r+1}\in A_{r+1}$ the Jucys-Murphy element. The different identities for~$\xi_X$ are equal since~$x_{r+1}$ commutes with elements of~$A_r$.
We can easily extend this to arbitrary objects~$X$ in~$\DA$.
\begin{prop}
The family of morphisms $\{\xi_X\,|\, X\in\Ob\DA\}$ yields an even natural transformation of the superfunctor~$\TT$ on~$\DA$.
\end{prop}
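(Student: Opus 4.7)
The morphism $\xi_X$ is even for every $X \in \Ob \DA$: for $X = (r,e)$ it lies in $A_{r+1} = \End_\cA(r+1)$, whose elements are all even as noted in \ref{defAr}; the definition extends to general $X$ by additivity and yields even morphisms.

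For naturality, since $\DA$ is the pseudo-abelian envelope of $\overline \cA$, it suffices to check the naturality square for morphisms between objects of the form $(r,e)$ and $(s,f)$. Given $\alpha \in f \Hom_\cA(r,s) e = \Hom_{\DA}((r,e),(s,f))$, the required equality $\TT(\alpha) \circ \xi_{(r,e)} = \xi_{(s,f)} \circ \TT(\alpha)$ reads
$$(\alpha \otimes \II) \circ (e \otimes \II) x_{r+1} (e \otimes \II) \;=\; (f \otimes \II) x_{s+1} (f \otimes \II) \circ (\alpha \otimes \II).$$
Since $x_{r+1}$ commutes with $A_r \hookrightarrow A_{r+1}$ by \ref{JMel} and $\alpha = f\alpha e$, one has $(e \otimes \II) x_{r+1} (e \otimes \II) = (e \otimes \II) x_{r+1}$, and analogously for $x_{s+1}$, so the idempotents can be absorbed. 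The naturality condition thus reduces to the identity
$$(\alpha \otimes \II) \circ x_{r+1} \;=\; x_{s+1} \circ (\alpha \otimes \II) \quad \text{in } \Hom_\cA(r+1, s+1), \qquad (\ast)$$
which is the essential content of the proposition.

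The plan for $(\ast)$ is a generators-and-relations reduction. The collection of $\alpha$ for which $(\ast)$ holds is closed under $\mk$-linear combinations and under composition, so by the presentation of $\cA$ as a strict monoidal supercategory generated by $\II, \XX, \cup, \cap$ (\ref{anti}), it suffices to verify $(\ast)$ when $\alpha$ is of the form $\II_a \otimes g \otimes \II_b$ for a single generator $g \in \{\XX, \cup, \cap\}$ and integers $a,b \ge 0$. Each such base case becomes a diagrammatic identity for the periplectic JM element $x_{r+1}$ that I would read off from its explicit description in \cite[Section~6]{PB1}; there, $x_{r+1}$ is a sum of morphisms each involving only the last strand together with one earlier strand, which makes its behaviour under an additional identity strand on the right transparent.

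The main obstacle is the verification of $(\ast)$ on the cup and cap generators, where one side of $(\ast)$ involves $x_1 = 0$. For $\alpha = \cup : 0 \to 2$, $(\ast)$ becomes the annihilation $x_3 \circ (\cup \otimes \II) = 0$, and for $\alpha = \cap : 2 \to 0$, it becomes $(\cap \otimes \II) \circ x_3 = 0$. These identities encode that the summands of $x_3$ cancel pairwise when composed with a cup or cap on the outer two strands, a property that has to be extracted from the explicit formula for $x_{r+1}$ in \cite{PB1}; once verified, the rest of the argument is formal.
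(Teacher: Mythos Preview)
Your approach is essentially the paper's: both reduce naturality to the identity $(\ast)$ and then argue by generators. The paper isolates $(\ast)$ as a separate lemma (Lemma~\ref{LemJM}); after noting that the case $r=s$ is just the commutation of $x_{r+1}$ with $A_r$, it reduces to the two families
\[
(\cup\otimes\II^{\otimes r-1})\,x_{r-1}=x_{r+1}\,(\cup\otimes\II^{\otimes r-1})
\quad\text{and}\quad
(\cap\otimes\II^{\otimes r-1})\,x_{r+1}=x_{r-1}\,(\cap\otimes\II^{\otimes r-1}),
\]
for all $r\ge 2$, and leaves these as easy diagrammatic checks.

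One imprecision in your write-up: the final paragraph reads as if verifying only the bare cases $\alpha=\cup$ and $\alpha=\cap$ (your $a=b=0$) suffices, after which ``the rest of the argument is formal.'' But closure under composition does \emph{not} reduce $\II_a\otimes\cup\otimes\II_b$ with $a+b>0$ to the bare $\cup$: since $\cup$ has source $0$, it cannot be postcomposed with anything nontrivial to raise the source. Using crossings you can move the cup to the far left, reducing to $\cup\otimes\II^{\otimes k}$, but you still need this for every $k\ge 0$, exactly as the paper records. Your earlier sentence does say each base case $\II_a\otimes g\otimes\II_b$ is a diagrammatic identity to be read off from the explicit JM formula, so the issue is one of emphasis rather than a real gap; but the claim that only the $x_3$ annihilation identities are the obstacle, with everything else formal, is misleading as stated.
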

\begin{proof}
Consider objects~$X=(r,e)$, $Y=(s,f)$ and a morphism 
$$a\;\in\;\Hom_{\DA}(X,Y)=f\Hom_{\cA}(r,s)e.$$
We claim that~$\TT(a)\circ \xi_X=\xi_Y \circ\TT(a)$.
Indeed, by \eqref{eqDefT} the left-hand, resp. right-hand side, becomes
$$(a\otimes \II)(e\otimes \II)x_{r+1}\;=\;(a\otimes \II)x_{r+1},\quad\mbox{resp.}\quad x_{s+1}(f\otimes\II)(a\otimes \II)\;=\;x_{s+1}(a\otimes \II).$$
The claim then follows from the subsequent Lemma~\ref{LemJM}. 
\end{proof}

\begin{lemma}
\label{LemJM}
For arbitrary~$a\in \Hom_{\cA}(s,r)$, we have
$(a\otimes \II)x_{r+1}=x_{s+1}(a\otimes \II).$
\end{lemma}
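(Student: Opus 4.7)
The plan is to reduce to the case where $a$ is (up to identity strands on both sides) one of the monoidal generators $\II,\XX,\cup,\cap$ of $\cA$ from \ref{anti}, and then handle each generator in turn. By $\mk$-linearity we may assume $a$ is a single Brauer diagram. Since $\cA$ is generated as a strict monoidal supercategory by those four morphisms, any such diagram admits a factorisation $a=a_n\circ\cdots\circ a_1$ into elementary morphisms of the form $a_i=\II^{\otimes k_i}\otimes g_i\otimes \II^{\otimes l_i}$ with $g_i\in\{\XX,\cup,\cap\}$.

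The claimed identity is compatible with composition. If $a=b\circ c$ factors through some intermediate object $t$, and the lemma holds for $b$ and $c$, then using $(b\circ c)\otimes\II=(b\otimes\II)\circ(c\otimes\II)$ we compute
$$(a\otimes\II)\, x_{r+1}\;=\;(b\otimes\II)(c\otimes\II)\, x_{r+1}\;=\;(b\otimes\II)\, x_{t+1}(c\otimes\II)\;=\;x_{s+1}(b\otimes\II)(c\otimes\II)\;=\;x_{s+1}(a\otimes\II),$$
where the middle two equalities apply the lemma to $c$ and $b$ respectively. Hence it suffices to verify the identity for each elementary $a=\II^{\otimes k}\otimes g\otimes\II^{\otimes l}$. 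When $g=\XX$, the morphism $a\otimes\II$ already lies in the subalgebra $A_r\hookrightarrow A_{r+1}$ acting on the first $r$ strands, and the claim is immediate from the commutativity of $x_{r+1}$ with $A_r$ recalled in \ref{JMel}.

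The substantive cases are $g=\cup$ and $g=\cap$, for which the source and target of $a$ differ and genuine Jucys-Murphy elements in different algebras are being compared. For these we invoke the explicit form of $x_{r+1}$ given in \cite[Section~6]{PB1}. By analogy with related Brauer-type algebras, $x_{r+1}$ is expected to be a sum over $1\le i\le r$ of terms constructed from the transposition and the contraction that couple strand $r+1$ to strand $i$, each term being supported near the rightmost strand. Attaching the extra identity strand on the far right then yields a term-by-term matching between the formulas for $x_{r+1}$ and $x_{s+1}$ modulo the common factor $a\otimes\II$. The main obstacle is precisely this diagrammatic matching in the cup/cap case, which cannot be deduced from the abstract commutation property alone and requires direct reference to the explicit Jucys-Murphy formula from \cite{PB1}.
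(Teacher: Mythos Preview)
Your reduction via monoidal generators and compatibility with composition is exactly the paper's strategy. Two remarks.

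First, the paper's reduction is slightly sharper than yours. Once you know the identity for all $a\in A_r=\Hom_{\cA}(r,r)$ (this is just the commutation of $x_{r+1}$ with $A_r$), you do not need to check $\II^{\otimes k}\otimes g\otimes\II^{\otimes l}$ for every pair $(k,l)$: any Brauer diagram can be written as a composition of a permutation, then cups/caps on the far left, then another permutation. Hence the paper only verifies the two special cases
\[
(\cup\otimes \II^{\otimes r-1})\,x_{r-1}=x_{r+1}\,(\cup\otimes \II^{\otimes r-1})
\quad\text{and}\quad
(\cap\otimes \II^{\otimes r-1})\,x_{r+1}=x_{r-1}\,(\cap\otimes \II^{\otimes r-1}).
\]
Your more general elementary factors would also work, but are redundant once the $A_r$-case is in hand.

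Second, your final paragraph is not a proof but a description of what a proof would look like, and the phrasing (``is expected to be'', ``the main obstacle'') signals that you have not actually carried out the computation. In the paper these are precisely the ``easy calculations left to the reader'': using the explicit definition of $x_{r+1}$ from \cite[Section~6]{PB1} as a signed sum of crossings and contractions coupling strand $r+1$ to each earlier strand, one checks the two displayed identities by a direct diagrammatic comparison of a handful of terms. So your strategy is sound, but you should replace the speculative last paragraph with the actual short verification (or at least state the explicit formula for $x_{r+1}$ and indicate which local relations between $\cup,\cap$ and the individual summands make the two sides agree).
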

\begin{proof}
The case~$r=s$ is precisely the fact that~$x_{r+1}$ commutes with~$A_r$, see \ref{JMel}. This means that it suffices to prove that, for $r\ge 2$,
$$(\cup\otimes \II^{\otimes r\,\minus\,1})x_{r\,\minus\,1}\;=\; x_{r+1}(\cup\otimes \II^{\otimes r\,\minus\,1})\quad\mbox{and}\quad(\cap\otimes \II^{\otimes r\,\minus\,1})x_{r+1}\;=\; x_{r\,\minus\,1}(\cap\otimes \II^{\otimes r\,\minus\,1}).$$
These easy calculations are left to the reader.
\end{proof}

\subsection{The functors~$\TT_q$}
We introduce some elements of~$A_r$. On any $A_r$-module, $x_r\in A_r$ only attains integer eigenvalues, see \cite[Section~6.2]{PB1}. If $r>0$, we can thus construct mutually orthogonal idempotents~$\gamma^{(r)}_i\in A_r$, for~$i\in\mZ$, which are in the subalgebra generated by~$x_r$, such that
\begin{equation}\label{eqsum1}1_{A_r}\;=\;e_r^\ast\;=\;\sum_{i\in\mZ}\gamma^{(r)}_i,\qquad\mbox{and~$\;\;\;\;\;\;\,(x_r-i)^p\gamma^{(r)}_i=0$, for some $p\in\mN$. }\end{equation}
Since we keep track of~$r$ in the notation, we can with slight abuse of notation also write $\gamma_j^{(r)}$ for~$\gamma_j^{(r)}\otimes e^\ast_{s-r}\in A_s$. By construction, $\gamma^{(r)}_i$ commutes with any element of~$A_{r\,\minus\,1}\subset A_r$. We also set~$\gamma_i^{(0)}=\delta_{i0}\in \mk =A_1$.
\begin{ex}
We have $x^2_2=1$ and consequently $\gamma^{(2)}_{1}=\frac{1}{2}(1+x_2)$, $\gamma^{(2)}_{-1}=\frac{1}{2}(1-x_2)$ and~$\gamma^{(2)}_i=0$ if $i\not\in\{1,-1\}$.
\end{ex}
For an idempotent $e\in A_r$, we set
$$e[j]\;=\; \gamma_j^{(r+1)}(e\otimes\II)\;=\;(e\otimes\II)\gamma_j^{(r+1)}.$$

\begin{ddef}\label{defaq}
For any $j\in\mZ$, the additive functor~$\TT_j$ on~$\DA$ is defined as
$$\TT_j(r,e)\;=\; (r+1,e[j]),\qquad\mbox{for all $r\in\mN$ and~$e$ an idempotent in~$A_r$, and}$$
$$\TT_j(a)\;=\; f[j](a\otimes \II)e[j]=\gamma_j^{(s+1)}(a\otimes\II)\gamma_j^{(r+1)},\qquad\mbox{for all $a\in\Hom_{\DA}((r,e),(s,f))$.}$$
By construction, we have $\TT=\bigoplus_{j\in\mZ}\TT_j$. Following Definition~\ref{defa}, for each $q\in\mZ$, we define a matrix $\aac^q$ by
$$\TT_q(R(\nu))\;=\; \bigoplus_{\kappa}R(\kappa)^{\oplus \aac^q_{\nu\kappa}}.$$
\end{ddef}

\begin{prop}\label{Propaq}
For each $q\in\mZ$, we have $\aac^q=\cc\,\bb^q\,\cc^{\minus1}$.
\end{prop}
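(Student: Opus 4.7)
My plan is to follow the same template as the proof of Theorem \ref{ThmTensorB}, but refined by keeping track of the idempotent $\gamma_q^{(r+1)}$ at each stage; the refined restriction formula \eqref{eqbq} from \cite{PB2} will play the role that \eqref{ResEq} played in that earlier proof.

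First, I would set $r=|\nu|$ and translate $\aac^q_{\nu\kappa}$ into a multiplicity of projectives. By Remark~\ref{RemThm} the object $\TT_q(R(\nu))=(r+1,e_\nu[q])$ corresponds to the $A_{r+1}$-module $A_{r+1}e_\nu[q]$, so $\aac^q_{\nu\kappa}$ equals the multiplicity of $P_{r+1}(\kappa)$ in it, which is $\dim\Hom_{A_{r+1}}(A_{r+1}e_\nu[q],L_{r+1}(\kappa))=\dim e_\nu[q]L_{r+1}(\kappa)$. The next key move is that, since $x_{r+1}$ (hence $\gamma^{(r+1)}_q$) commutes with $A_r\subset A_{r+1}$, we may rewrite
$$e_\nu[q]L_{r+1}(\kappa)\;=\;\gamma^{(r+1)}_q(e_\nu\otimes\II)L_{r+1}(\kappa)\;=\;(e_\nu\otimes\II)L_{r+1}(\kappa)_q\;=\;e_\nu L_{r+1}(\kappa)_q,$$
where $L_{r+1}(\kappa)_q$ is viewed as an $A_r$-submodule of $\res_{r+1}L_{r+1}(\kappa)$ via the generalised $q$-eigenspace decomposition. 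Hence $\aac^q_{\nu\kappa}=[L_{r+1}(\kappa)_q:L_r(\nu)]$, the analogue of formula \eqref{aSimple}.

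Next I would multiply by $\cc_{\kappa\lambda}$ and sum over $\kappa$. Because $M\mapsto M_q=\gamma^{(r+1)}_qM$ is an exact endofunctor on the category of $A_{r+1}$-modules (it is given by multiplication by an idempotent), the $q$-eigenspace of any composition series of $W_{r+1}(\lambda)$ still computes $[W_{r+1}(\lambda)_q]$ in the Grothendieck group of $A_r$-modules, yielding
$$\sum_\kappa \aac^q_{\nu\kappa}\cc_{\kappa\lambda}\;=\;\sum_\kappa [W_{r+1}(\lambda):L_{r+1}(\kappa)]\,[L_{r+1}(\kappa)_q:L_r(\nu)]\;=\;[W_{r+1}(\lambda)_q:L_r(\nu)].$$
On the other hand, by the refined cell-filtration formula \eqref{eqbq}, $W_{r+1}(\lambda)_q$ admits a filtration with $A_r$-cell multiplicities $(W_{r+1}(\lambda)_q:W_r(\mu))=\bb^q_{\mu\lambda}$, so
$$[W_{r+1}(\lambda)_q:L_r(\nu)]\;=\;\sum_\mu \bb^q_{\mu\lambda}\,\cc_{\nu\mu}.$$
Comparing these two expressions gives the matrix identity $\aac^q\cc=\cc\bb^q$, i.e.\ $\aac^q=\cc\bb^q\cc^{\minus 1}$.

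The only delicate point is the commutation of $\gamma^{(r+1)}_q$ past $e_\nu\otimes\II$ and the resulting identification $(e_\nu\otimes\II)L_{r+1}(\kappa)_q=e_\nu L_{r+1}(\kappa)_q$; this rests on the fact, noted in \ref{JMel}, that $x_{r+1}$ commutes with $A_r$. Everything else is a routine refinement of the proof of Theorem~\ref{ThmTensorB}, with \eqref{ResEq} replaced by its eigenspace-sensitive version \eqref{eqbq}.
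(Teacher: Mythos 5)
Your proposal is correct and follows essentially the same route as the paper's own proof: reduce $\aac^q_{\nu\kappa}$ to $\dim_\mk e_\nu[q]L_{r+1}(\kappa)$, use that $\gamma^{(r+1)}_q$ commutes with $A_r$ to identify this with $[L_{r+1}(\kappa)_q:L_r(\nu)]$, and then combine with \eqref{eqbq} to obtain $\aac^q\cc=\cc\,\bb^q$. The details, including the careful handling of the generalised eigenspace and the index bookkeeping in \eqref{eqbq}, match the paper's argument.
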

\begin{proof}
This is an analogue of the proof of Theorem~\ref{ThmTensorB}. Consider $\nu\in\PA$ with~$r=|\nu|$. We have
$$a_{\nu\kappa}\,=\, \dim_{\mk} (e_\nu\otimes\II)L_{r+1}(\kappa)\,=\, \dim_{\mk} e_\nu\,\res_{r+1}L_{r+1}(\kappa).$$
Correspondingly, we find
$$a^q_{\nu\kappa}\,=\, \dim_{\mk} e_\nu[q] L_{r+1}(\kappa).$$
Since~$e_\nu[q](x_{r+1}-q)^k=0$ for some $k\in\mN$, we find
\begin{equation}\label{eqaq}a^q_{\nu\kappa}\,=\, \dim_{\mk} e_\nu L_{r+1}(\kappa)_{q}\,=\,\dim_{\mk}\Hom_{A_r}(A_re_\nu,L_{r+1}(\kappa)_{q})\,=\,[L_{r+1}(\kappa)_{q}:L_r(\nu)].\end{equation}
This and equation~\eqref{eqbq} imply that
$$(\aac^q\,\cc)_{\nu\lambda}\;=\;[W_{r+1}(\lambda)_{q}:L_r(\nu)]\;=\; (\cc\,\bb^q)_{\nu\lambda},$$
which concludes the proof.
\end{proof}

  \begin{lemma}\label{RemoveBox}
 Let~$\nu$ be a partition with~$\kappa=\nu\boxminus q$. 
 \begin{enumerate}[(i)]
 \item If $\kappa$ has a removable~$q-1$-box, then~$\aac^{q\,\minus \,1}_{\nu\kappa}=1$.
  \item  If $\kappa$ has a removable~$q+1$-box, then~$\aac^{q+1}_{\nu\kappa}=1$.
 \end{enumerate}
  \end{lemma}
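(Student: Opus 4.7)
The plan is to start from Proposition~\ref{Propaq}, which states $\aac^{q'} = \cc\,\bb^{q'}\,\cc^{\minus 1}$, or equivalently $\aac^{q'}\cc = \cc\,\bb^{q'}$. Evaluating this matrix identity at the entry $(\nu, \kappa)$, and using that $\bb^{q'}_{\mu\kappa}$ is nonzero precisely when $\mu = \kappa \boxminus q'$ or $\mu = \kappa \boxplus (q'-1)$, together with the unitriangularity of $\cc$ with respect to containment, yields
\[
\aac^{q'}_{\nu\kappa} \,+\, \sum_{\kappa'\supsetneq\kappa} \aac^{q'}_{\nu\kappa'}\,\cc_{\kappa'\kappa}
\;=\; \cc_{\nu,\,\kappa\boxminus q'} \,+\, \cc_{\nu,\,\kappa\boxplus(q'-1)}.
\]
Since $\aac^{q'}_{\nu\kappa'}$ is the multiplicity of $R(\kappa')$ in an object of the form $(r+1,e)$, the indices $\kappa'$ on the left are forced to lie in $\Lambda_{r+1}$, so their sizes satisfy $|\kappa'|\in\JJJ^0(r+1)$; combined with $\kappa'\supsetneq\kappa$ this forces $|\kappa'|=r+1=|\kappa|+2$, whence $\kappa'/\kappa$ is a two-box skew shape in $\Gamma$.

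For part~(i), substitute $q'=q-1$. Since $\nu/\kappa$ is a single $q$-box, the partition $\kappa\boxplus(q-2)$ contains a box not present in $\nu$, so $\cc_{\nu,\kappa\boxplus(q-2)}=0$ and the right-hand side reduces to $\cc_{\nu,\kappa\boxminus(q-1)}$. By hypothesis $\kappa\boxminus(q-1)$ is a partition, and the skew shape $\nu/(\kappa\boxminus(q-1))$ consists of the $q$-box of $\nu/\kappa$ together with the removable $(q-1)$-box of $\kappa$. Invoking the explicit description of $\Gamma$ from~\cite{PB2} shows this two-box skew shape belongs to $\Gamma$, so $\cc_{\nu,\kappa\boxminus(q-1)}=1$.

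For part~(ii), substitute $q'=q+1$. Then $\kappa\boxplus q=\nu$, so $\cc_{\nu,\kappa\boxplus q}=\cc_{\nu\nu}=1$ and the right-hand side becomes $\cc_{\nu,\kappa\boxminus(q+1)}+1$. A quick parity check shows $|\nu|=r$ is not in $\JJJ^0(r+1)$, so $\nu\notin\Lambda_{r+1}$, $L_{r+1}(\nu)=0$, and hence $\aac^{q'}_{\nu\nu}=0$ by~\eqref{eqaq}; thus the diagonal term does not contribute to the correction sum.

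The main obstacle is controlling the correction sum $\sum_{\kappa'\supsetneq\kappa}\aac^{q'}_{\nu\kappa'}\cc_{\kappa'\kappa}$. Each summand involves a two-box skew shape $\kappa'/\kappa\in\Gamma$, and the individual value $\aac^{q'}_{\nu\kappa'}$ must be extracted using~\eqref{eqaq}. A case-by-case enumeration of those $\kappa'$, based on the description of $\Gamma$ in~\cite{PB2} (together with the non-negativity of both factors), should show that in part~(i) the correction sum vanishes---every candidate $\kappa'$ has either $\aac^{q-1}_{\nu\kappa'}=0$ or $\cc_{\kappa'\kappa}=0$---while in part~(ii) the correction sum equals $\cc_{\nu,\kappa\boxminus(q+1)}$, exactly cancelling the matching summand on the right. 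In both cases one is left with $\aac^{q'}_{\nu\kappa}=1$, establishing the lemma.
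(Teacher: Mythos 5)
Your reduction via $\aac^{q'}\cc=\cc\,\bb^{q'}$ and the triangularity of $\cc$ is sound, and it is a genuinely different route from the paper, which simply observes that by equation~\eqref{eqaq} one has $\aac^{q'}_{\nu\kappa}=[L_{r+1}(\kappa)_{q'}:L_r(\nu)]$ and then quotes \cite[Lemmas~2.2.1 and~2.2.3]{PB2} (plus, for part~(ii), the one-line geometric observation that $\nu$ admits no addable $(q+1)$-box). The problem is that in your version the entire content of the lemma sits in the two steps you defer: the evaluation of the correction sum (``a case-by-case enumeration \ldots should show'') and the membership of the two dominoes $\nu/(\kappa\boxminus(q\mp1))$ in $\Gamma$ (``invoking the explicit description of $\Gamma$''). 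Neither is carried out, and the second genuinely requires the definition of $\Gamma$ from \cite[Section~3]{PB2}, which is not reproduced in this paper; so as written the argument establishes nothing beyond the identity in your first display.

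Two concrete points would repair and also correct the picture. First, your own size constraint $|\kappa'|=r+1=|\nu|+1$ does more than you use: for $|\kappa'|=|\nu|+1$ the triangularity forces $\mu=\nu$ and $\rho=\kappa'$ in $\aac^{q'}_{\nu\kappa'}=\sum_{\mu,\rho}\cc_{\nu\mu}\bb^{q'}_{\mu\rho}\cc^{\minus1}_{\rho\kappa'}$, so $\aac^{q'}_{\nu\kappa'}=\bb^{q'}_{\nu\kappa'}$ and the correction sum has at most one term, namely $\kappa'=\nu\boxplus q'$, contributing $\cc_{\nu\boxplus q',\,\kappa}$. Second, an elementary check of the Young diagram around the removed $q$-box shows that under the hypothesis of (i) the partition $\nu$ has no addable $(q-1)$-box, and under the hypothesis of (ii) no addable $(q+1)$-box (the latter is exactly the observation in the paper's proof). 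Hence the correction sum vanishes in \emph{both} cases, and the lemma becomes equivalent to $\cc_{\nu,\,\kappa\boxminus(q-1)}=1$ in case~(i) and $\cc_{\nu,\,\kappa\boxminus(q+1)}=0$ in case~(ii). In particular your proposed mechanism for (ii) --- a nontrivial cancellation of the correction sum against $\cc_{\nu,\kappa\boxminus(q+1)}$ --- is not what happens: both quantities must vanish separately, the second because the vertical domino with contents $q,q+1$ does not lie in $\Gamma$, while the horizontal domino with contents $q-1,q$ does. Those two facts about $\Gamma$ still have to be checked against \cite[Section~3]{PB2}, so the proof cannot be closed from the present paper alone.
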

 \begin{proof}
 Part (i) is  \cite[Lemma~2.2.3]{PB2}, by equation~\eqref{eqaq}.

 For part (ii), we claim that~$\nu$ does not admit an addable~$q+1$-box. Indeed, in order for~$\kappa$ to have a removable~$q+1$-box, there must be a $q+1$-box above the~$q$-box in~$\nu$, such that there is no $q+2$-box to the right of the~$q+1$-box. 
Part (ii) then follows from 
 \cite[Lemma~2.2.1]{PB2}, by equation~\eqref{eqaq}.
 \end{proof}

 An alternative way to prove Lemma~\ref{RemoveBox} is to use the results in Section~\ref{SecTL}.
 
 \begin{lemma}\label{Lemaa1}
 If $\tilde\lambda=\lambda\boxplus q$, then~$\aac^q_{\lambda\tilde{\lambda}}=1$.
 \end{lemma}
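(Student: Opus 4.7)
The plan is to compute $\aac^{q}_{\lambda\tilde\lambda}$ directly from the factorisation $\aac^q=\cc\,\bb^q\,\cc^{\minus1}$ provided by Proposition~\ref{Propaq}, and show that exactly one term in the resulting double sum survives, namely the diagonal one. Writing
\[
\aac^q_{\lambda\tilde\lambda}\;=\;\sum_{\mu,\nu\in\PA}\cc_{\lambda\mu}\,\bb^q_{\mu\nu}\,\cc^{\minus1}_{\nu\tilde\lambda},
\]
I would exploit three very restrictive support conditions on the three factors.

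\textbf{The key bookkeeping.} The factor $\cc_{\lambda\mu}$ forces $\mu\subseteq\lambda$, and $\cc^{\minus1}_{\nu\tilde\lambda}$ forces $\tilde\lambda\subseteq\nu$. In particular $|\mu|\le|\lambda|$ and $|\nu|\ge|\tilde\lambda|=|\lambda|+1$. The middle factor $\bb^q_{\mu\nu}$ is nonzero only in the two cases $\nu=\mu\boxplus q$ (so $|\nu|=|\mu|+1$) or $\nu=\mu\boxminus(q\minus1)$ (so $|\nu|=|\mu|-1$). The second case is immediately ruled out, since it would give $|\nu|\le|\lambda|-1<|\lambda|+1\le|\nu|$. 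In the first case, chaining the inequalities gives $|\lambda|+1\le|\nu|=|\mu|+1\le|\lambda|+1$, which forces $|\mu|=|\lambda|$ and $|\nu|=|\tilde\lambda|$. Combined with $\mu\subseteq\lambda$ and $\tilde\lambda\subseteq\nu$, this pins down $\mu=\lambda$ and $\nu=\tilde\lambda$, with $\nu=\mu\boxplus q$ automatically holding since $\tilde\lambda=\lambda\boxplus q$.

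\textbf{Conclusion.} The unique nonzero term is $\cc_{\lambda\lambda}\,\bb^q_{\lambda\tilde\lambda}\,\cc^{\minus1}_{\tilde\lambda\tilde\lambda}=1\cdot 1\cdot 1=1$, using the already-recorded normalisations $\cc_{\lambda\lambda}=\cc^{\minus1}_{\lambda\lambda}=1$ from Section~\ref{SecCell}. Thus $\aac^q_{\lambda\tilde\lambda}=1$.

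There is no real obstacle here: the argument is a short support-plus-cardinality pigeonhole, and the only thing one needs to be careful about is the asymmetric support of $\bb^q$ (addition of a $q$-box versus removal of a $(q\minus1)$-box), which is what eliminates the second case cleanly by cardinalities alone. If desired, the same conclusion can be derived representation-theoretically from equation~\eqref{eqaq}, by noting that $[L_{r+1}(\tilde\lambda)_q:L_r(\lambda)]\ge 1$ (because $\tilde\lambda=\lambda\boxplus q$ means $L_{r+1}(\tilde\lambda)$ appears in a summand associated to the $q$-content in the restriction branching of $L_r(\lambda)$) while the support constraints above show the multiplicity cannot exceed $1$.
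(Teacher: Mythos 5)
Your proposal is correct and follows essentially the same route as the paper: both compute $\aac^q_{\lambda\tilde\lambda}$ from Proposition~\ref{Propaq} and observe that the support conditions $\mu\subseteq\lambda\subsetneq\tilde\lambda\subseteq\nu$ together with the one-box support of $\bb^q$ force $\mu=\lambda$ and $\nu=\tilde\lambda$. Your cardinality bookkeeping just makes explicit the elimination of the $\boxminus(q\minus1)$ case, which the paper handles implicitly.
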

 \begin{proof}
 By Proposition~\ref{Propaq}, we have
 $$\aac^q_{\lambda\tilde{\lambda}}=\sum_{\mu\subseteq \lambda}\sum_{\nu\supseteq \tilde{\lambda}}\cc_{\lambda\mu}\bb^q_{\mu\nu}\cc^{\minus 1}_{\nu\tilde{\lambda}}.$$
 The summation thus goes over $\mu,\nu\in\PA$ with
 $\mu\subseteq \lambda\subsetneq \tilde{\lambda}\subseteq\nu.$
 On the other hand~$\bb^{q}_{\mu\nu}=0$ unless~$\mu$ and~$\nu$ differ by precisely one box. Hence we have
 $$\aac^q_{\lambda\tilde{\lambda}}=\cc_{\lambda\lambda}\bb^q_{\lambda\tilde{\lambda}}\cc^{\minus 1}_{\tilde{\lambda}\tilde{\lambda}}=1,$$
 which concludes the proof.
 \end{proof}
 
 \begin{cor}\label{Coraa1}
If $\tilde\lambda=\lambda\boxplus q$, then~$(\cc\,\bb^q)_{\lambda\nu}\ge \cc_{\tilde{\lambda}\nu}$ for all $\nu\in\PA$.
 \end{cor}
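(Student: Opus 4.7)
The plan is to rewrite the left-hand side in the form $\aac^q \cc$ using Proposition~\ref{Propaq}, and then pick out a single term indexed by $\tilde\lambda$ from the resulting sum, using Lemma~\ref{Lemaa1} to identify the coefficient and non-negativity of the remaining entries to get the inequality.

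More concretely, by Proposition~\ref{Propaq} we have $\aac^q = \cc\,\bb^q\,\cc^{\minus 1}$, so multiplying by $\cc$ on the right yields the identity $\cc\,\bb^q = \aac^q\,\cc$. Taking the $(\lambda,\nu)$-entry gives
\begin{equation*}
(\cc\,\bb^q)_{\lambda\nu} \;=\; \sum_{\kappa\in\PA}\aac^q_{\lambda\kappa}\,\cc_{\kappa\nu}.
\end{equation*}

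Now I would argue that every term on the right is non-negative: the entries $\aac^q_{\lambda\kappa}$ are multiplicities of indecomposable summands (Definition~\ref{defaq}), hence lie in $\mN$, while the entries $\cc_{\kappa\nu}\in\{0,1\}$ are non-negative by construction in Section~\ref{SecCell}. Singling out the term $\kappa=\tilde\lambda$ and applying Lemma~\ref{Lemaa1}, which says $\aac^q_{\lambda\tilde\lambda}=1$, gives
\begin{equation*}
(\cc\,\bb^q)_{\lambda\nu} \;\geq\; \aac^q_{\lambda\tilde\lambda}\,\cc_{\tilde\lambda\nu} \;=\; \cc_{\tilde\lambda\nu},
\end{equation*}
which is the desired inequality.

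There isn't really a hard step here — the whole content is the observation that once $\aac^q\cc$ expands as a sum of non-negative contributions, Lemma~\ref{Lemaa1} identifies a single contribution equal to $\cc_{\tilde\lambda\nu}$. The only thing one needs to check carefully is that all the relevant quantities are non-negative, which follows immediately from their interpretation as direct summand multiplicities and as cell composition multiplicities.
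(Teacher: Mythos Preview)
Your proof is correct and essentially identical to the paper's own argument: both rewrite $\cc\,\bb^q$ as $\aac^q\,\cc$ via Proposition~\ref{Propaq}, use the non-negativity of the entries of $\aac^q$ (from Definition~\ref{defaq}) and of $\cc$, and then invoke Lemma~\ref{Lemaa1} to isolate the $\kappa=\tilde\lambda$ term.
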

 \begin{proof}
 By Lemma~\ref{Lemaa1}, we have $\aac^q_{\lambda\eta}\ge\delta_{\eta\tilde{\lambda}}$, for all $\eta\in\PA$, where positivity of the entries of~$\aac^{q}$ follows by Definition~\ref{defaq}. Since the entries of~$\cc$ are also positive, we thus find
$$(\cc\bb^q)_{\lambda\nu}\;=\;(\aac^q\cc)_{\lambda\nu}\;\ge\; \cc_{\tilde{\lambda}\nu},$$
where the first equation is Proposition~\ref{Propaq}.
 \end{proof}
 

\section{The Fock space representation of the infinite Temperley-Lieb algebra}\label{SecTL}
Consider the~$\mZ$-algebra with generators~$\{T_i\,|\,i\in\mZ\}$ and relations (with~$|i-j|>1$)
$$T_i^2=0,\;\; \;\;T_iT_j=T_jT_i\;\;\;\mbox{and}\;\;\;\;\; T_{i}T_{i\pm 1}T_i=T_i.$$
This is the infinite Temperley-Lieb algebra over $\mZ$ for parameter zero, $\TL_\infty(0)$. In this section, we will consider two representations of~$\TL_\infty(0)$ on~$\PA_{\mZ}$, related by an automorphism of~$\PA_{\mZ}$. Due to its close connection with the Fock space representation of~$\mathfrak{sl}_\infty$, we will refer to one as the Fock space representation of~$\TL_\infty(0)$. The twisted version is the one that will describe the combinatorics of the periplectic Deligne category and will be referred to as $\Xi$.

\subsection{The representation~$\Xi$}
By Propositions~\ref{PropUnique}, \ref{PropExis} and~\ref{PropFaith}, we have the following theorem.
\begin{thm}
There exists a unique representation 
$$\Xi\,:\;\TL_{\infty}(0)\,\to\,\End_\mZ(\PA_{\mZ})$$
which satisfies for all $q\in\mZ$:
\begin{itemize}
\item $\Xi(T_q)(\varnothing)=\delta_{q0}\Box$;
\item $\Xi(T_q)(\lambda)\;=\;\lambda\boxplus q$ for any $\lambda\in\PA$ which admits an addable~$q$-box.
\end{itemize}
Moreover, the representation~$\Xi$ is faithful.
\end{thm}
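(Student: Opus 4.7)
My plan decomposes into the three claims (uniqueness, existence, faithfulness), corresponding to Propositions~\ref{PropUnique}, \ref{PropExis}, and~\ref{PropFaith}.

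For uniqueness, I would induct on $|\lambda|$, with base case $\lambda = \varnothing$ given by the first specification. For $|\lambda| \ge 1$, the value $\Xi(T_q)(\lambda)$ is forced by case analysis: if $\lambda$ has an addable $q$-box, the second specification gives the value directly; if $\lambda$ has a removable $q$-box, writing $\lambda = \mu \boxplus q$ and applying $T_q^2 = 0$ yields $\Xi(T_q)(\lambda) = \Xi(T_q^2)(\mu) = 0$; otherwise pick a removable $p$-box of $\lambda$ with $p \ne q$. In the generic subcase $|p - q| \ge 2$, the commutation relation $\Xi(T_q)\Xi(T_p) = \Xi(T_p)\Xi(T_q)$ applied to $\lambda \boxminus p$ reduces via the inductive hypothesis, with the subsequent application of $\Xi(T_p)$ handled by iterating the same procedure. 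The delicate subcase is when every removable box of $\lambda$ has content $q \pm 1$; here one exploits that $\lambda$ still has addable corners of extremal content (bottom-left and top-right), combining commutation against such a distant addable box with the braid relations $T_p T_q T_p = T_p$ to force a unique value.

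For existence, I would give an explicit combinatorial formula for $\Xi$ on the basis $\PA$ of $\PA_\mZ$ and verify each of the three defining relations directly. The construction is naturally modelled on the Fock space representation of $\mathfrak{sl}_\infty$, with $\Xi(T_q)(\lambda)$ expressed as a $\mZ$-linear combination of partitions obtained from $\lambda$ by prescribed sequences of box additions and removals involving contents near $q$. As suggested by the section title, one might first construct the untwisted Fock space representation (which admits a cleaner description) and then define $\Xi$ via an explicit change of basis. Verifying the three TL relations reduces to a finite case analysis based on the local structure of the Young diagram of $\lambda$ near contents $q-1$, $q$, and $q+1$.

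For faithfulness, I would use a canonical basis of $\TL_\infty(0)$ given by reduced monomials in the $T_q$'s (described e.g.\ diagrammatically) and, for each such monomial $w$, examine the vector $\Xi(w)(\varnothing) \in \PA_\mZ$. Because every partition arises from $\varnothing$ by a sequence of addable-$q$-box operations precisely captured by the second specification, distinct reduced monomials produce distinct partitions (or zero) when applied to $\varnothing$; combined with a triangularity argument in $|\lambda|$, this forces the $\mZ$-linear independence of the images $\{\Xi(w)\}$ and hence faithfulness of $\Xi$.

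The principal obstacle is the existence step: one must design a combinatorial formula that simultaneously agrees with both specifications and satisfies all three TL relations. The braid relation $T_q T_{q \pm 1} T_q = T_q$ in particular couples the action of $\Xi(T_q)$ and $\Xi(T_{q \pm 1})$ on partitions where neither specification applies directly, and its verification requires careful bookkeeping of how box additions and removals at adjacent contents interact.
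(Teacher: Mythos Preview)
Your three-part decomposition matches the paper's, and your uniqueness argument is close in spirit to Proposition~\ref{PropUnique}: the paper also inducts, handles the addable and removable $q$-box cases exactly as you do, and then treats the remaining configurations by peeling off a box and invoking either commutation or the braid relation. Your ``delicate subcase'' is essentially the paper's types~(d) and~(e); note that the paper does \emph{not} use distant addable boxes there (that would increase $|\lambda|$ and break the induction), but rather observes that whenever every removable box of $\lambda$ has content $q\pm 1$, one of those removable boxes sits immediately adjacent to the rim $q$-box, so the braid relation $T_qT_{q\pm 1}T_q=T_q$ applies directly after two removals. Your reference to addable corners is at best a red herring.

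For existence, your plan to build the untwisted Fock space representation first and then change basis is exactly what the paper does (Lemma~\ref{LemEasyRep} followed by Proposition~\ref{PropExis}). However, you misidentify the difficulty: once $\Xi$ is defined as a twist of a $\TL_\infty(0)$-representation, the TL relations are automatic, so there is nothing to verify there. The genuine work is checking that the twisted representation satisfies the \emph{specification} $\Xi(T_q)(\lambda)=\lambda\boxplus q$ for addable $q$-boxes. In the paper this requires the explicit change-of-basis matrix $\cc$ coming from decomposition numbers of the periplectic Brauer algebra, and the verification (equation~\eqref{eq:two_sets}) is a nontrivial combinatorial argument using the set $\Gamma$ of skew shapes from~\cite{PB2}. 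Your sketch does not indicate what the change of basis is or how to carry out this check.

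Your faithfulness argument has a real gap. Applying basis monomials to $\varnothing$ cannot distinguish them: $\Xi(T_q)(\varnothing)=0$ for every $q\neq 0$, so any monomial whose rightmost factor is not $T_0$ kills $\varnothing$, and there are infinitely many such basis elements. The paper's Proposition~\ref{PropFaith} instead works with the untwisted $\Xi'$, takes a maximal-length monomial $T_{\underline w}$ in the given linear combination, and constructs a specific large partition $\lambda$ (depending on $\underline w$) such that $\Xi'(T_{\underline w})(\lambda)$ contains a partition of size $|\lambda|-\ell(\underline w)$ not reachable by any other term. The key is that one must choose the test vector $\lambda$ adapted to the element being separated, and look at the component of minimal size; a fixed test vector such as $\varnothing$ is not enough.
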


The following theorem is an immediate consequence of the realisation of~$\Xi$ in Proposition~\ref{PropExis}.
\begin{thm}\label{ThmDecat}
The $\mZ$-module isomorphism $\Psi: \PA_{\mZ}\to[\DA]_{\oplus}$ in Corollary~\ref{CorSplit} satisfies~$[\TT_j]\circ\Psi=\Psi\circ\Xi(T_j)$.
Hence, the functors~$\TT_j$ satisfy the properties (with~$|i-j|>1$)
$$[\TT_i]^2=0,\;\; \;\;[\TT_i][\TT_j]=[\TT_j][\TT_i]\;\;\;\mbox{and}\;\;\;\;\; [\TT_{i}][ \TT_{i\pm 1}][\TT_i]=[\TT_i].$$
\end{thm}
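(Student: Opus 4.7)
The plan is to reduce the theorem to a matrix comparison in the bases $\PA$ of $\PA_\mZ$ and $\{[R(\lambda)]\}$ of $[\DA]_\oplus$, and then deduce the three identities formally from the fact that $\Xi$ is a representation of $\TL_\infty(0)$. Since both $[\TT_j]\circ\Psi$ and $\Psi\circ\Xi(T_j)$ are $\mZ$-linear maps $\PA_\mZ\to[\DA]_\oplus$ and $\PA$ is a $\mZ$-basis of $\PA_\mZ$, it suffices to check the equality on each partition $\lambda\in\PA$.

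First I would unpack the left-hand side. By Corollary~\ref{CorSplit}, $\Psi(\lambda)=[R(\lambda)]$, and by Definition~\ref{defaq} combined with Proposition~\ref{Propaq},
\[ [\TT_j]\circ\Psi(\lambda)\;=\;\sum_{\kappa\in\PA}\aac^j_{\lambda\kappa}\,[R(\kappa)]\;=\;\sum_{\kappa\in\PA}(\cc\,\bb^j\,\cc^{\minus 1})_{\lambda\kappa}\,[R(\kappa)]. \]
For the right-hand side I would invoke the realisation of $\Xi$ in Proposition~\ref{PropExis}: the claim that Theorem~\ref{ThmDecat} is an immediate consequence of that proposition means the realisation provided there exhibits $\Xi(T_j)$, in the basis $\PA$, by the very matrix $\cc\,\bb^j\,\cc^{\minus 1}$. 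Applying $\Psi$ termwise then yields the same expression as above, which establishes the intertwining identity $[\TT_j]\circ\Psi=\Psi\circ\Xi(T_j)$.

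For the second statement I would transport the defining relations of $\TL_\infty(0)$ through $\Xi$ and $\Psi$. Since $\Xi$ is a representation, the equalities $\Xi(T_i)^2=0$, $\Xi(T_i)\Xi(T_j)=\Xi(T_j)\Xi(T_i)$ for $|i-j|>1$, and $\Xi(T_i)\Xi(T_{i\pm 1})\Xi(T_i)=\Xi(T_i)$ hold in $\End_\mZ(\PA_\mZ)$. Iterating the intertwining identity gives $[\TT_{j_1}]\cdots[\TT_{j_k}]\circ\Psi=\Psi\circ\Xi(T_{j_1})\cdots\Xi(T_{j_k})$ for every word, and since $\Psi$ is a $\mZ$-module isomorphism each of the three Temperley--Lieb relations transfers directly to the operators $[\TT_i]$ on $[\DA]_\oplus$.

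The main obstacle is external to this proof: everything rests on extracting from Proposition~\ref{PropExis} the explicit identification of $\Xi(T_j)$ with the matrix $\cc\,\bb^j\,\cc^{\minus 1}$ in the basis $\PA$ (as opposed to the partial characterisation via addable $q$-boxes given in the theorem statement of $\Xi$). Granted that realisation, the present theorem is a routine decategorification transport and requires no further structural input.
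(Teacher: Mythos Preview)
Your proposal is correct and follows exactly the approach the paper intends: the paper states the theorem is ``an immediate consequence of the realisation of~$\Xi$ in Proposition~\ref{PropExis}'', and your argument spells out precisely this, matching $[\TT_j]$ and $\Xi(T_j)$ via the matrix $\aac^j=\cc\,\bb^j\,\cc^{\minus1}$ and then transporting the Temperley--Lieb relations along the isomorphism~$\Psi$. The ``obstacle'' you flag is not actually an obstacle: Proposition~\ref{PropExis} \emph{defines} its representation by $T_q(\nu)=\sum_\kappa \aac^q_{\nu\kappa}\kappa$ and then verifies the characterising properties (I) and (II), so uniqueness (Proposition~\ref{PropUnique}) forces this representation to coincide with~$\Xi$; the identification $\Xi(T_j)=\aac^j$ is therefore immediate from the statements, not buried in the proof.
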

This means that~$(\DA,\Psi,\{\TT_i\,|\,i\in\mZ\})$ is a $\mZ$-categorification of the $\TL_{\infty}(0)$-representation~$\Xi$, in the na\"ive sense in the terminology of~\cite[Section~2.2]{Maz}. We will improve this statement in Section~\ref{SecCateg}.

 In the following, we will usually write $T_q(\lambda)$ instead of~$\Xi(T_q)(\lambda)$.

\subsection{Uniqueness of the representation}
The combinatorial arguments in this subsection are inspired by the results and proofs in~\cite[Section~7.2]{gang}.

\subsubsection{} For aribrary~$\lambda\in\PA$ and $q\in\mZ$, there are 5 mutually exclusive possibilities:
\begin{enumerate}[(a)]
\item $\lambda$ admits an addable~$q$-box;
\item $\lambda$ has a removable~$q$-box;
\item $\lambda$ has a no boxes with content in~$\{q-1,q,q+1\}$ (and~$\lambda\not=\varnothing$ when~$q=0$);
\item there is a box right of the (existing) rim $q$-box of~$\lambda$, but not below;
\item there is a box below the (existing) rim $q$-box of~$\lambda$, but not to its right.
\end{enumerate}
We draw the~$q-1$, $q$ and~$q+1$ boxes on the rim of~$\lambda$ in the `generic' cases (meaning assuming that all three contents appear in~$\lambda$) corresponding to (a), (b), (d) and~(e):
\[
(a):\parbox[c]{1cm}{ \young(q\leeg,\leeg)},\quad (b): \parbox[c]{1cm}{\young(:\leeg,\leeg q)},\quad
(d):\parbox[c]{1.5cm}{\young(\leeg q\leeg) },\quad (e):\parbox[c]{0.7cm}{\young(\leeg,q,\leeg)}.
\]
If it is clear from context which $q$ is referred to, we will simply say that~$\lambda$ is of type (a), (b), etc.

\subsubsection{} We also introduce some terminology for (rim) hooks. A hook is called {\em balanced} if its height (the number of rows it has boxes in) is the same as its width (the number of columns it has boxes in). A rim hook of~$\lambda$ such that the minimal, resp. maximal, content of its boxes is $q$ is called {\em a rim hook starting at~$q$}, resp. {\em a rim hook ending at~$q$}.

In case~(d) there will always be a rim hook starting at~$q$ and one starting at~$q+1$, in case~(e) there will always be a rim hooks ending at~$q$ and~$q-1$.

\begin{prop}\label{PropUnique}
Assume that a representation of~$\TL_{\infty}(0)$ on~$\PA_{\mZ}$ satisfies, for any $q\in\mZ$ and~$\lambda\in\PA$:
\begin{enumerate}[(I)] 
\item $T_q(\varnothing)=\delta_{q0}\Box$;
\item $T_q(\lambda)\;=\;\lambda\boxplus q$ if $\lambda$ is of type (a).
\end{enumerate}
Then we have the following:
\begin{enumerate}
\item[(III)] If $\lambda$ is of type (b) or (c), $T_q(\lambda)\;=\;0$;
\item[(IV)] If $\lambda$ is of type (d), $T_q(\lambda)$ is the partition obtained by removing the minimal balanced rim hook starting at~$q+1$, if that exists, otherwise $T_q(\lambda)=0$;
\item[(V)] If $\lambda$ is of type (e), $T_q(\lambda)$ is the partition obtained by removing the minimal balanced rim hook ending at~$q-1$, if that exists, otherwise $T_q(\lambda)=0$.
\end{enumerate}
In particular, there is at most one representation of~$\TL_{\infty}(0)$ on~$\PA_{\mZ}$ satisfying (I) and~(II).
\end{prop}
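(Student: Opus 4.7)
The plan is to proceed by induction on $|\lambda|$, establishing all four cases (b)–(e) simultaneously. The base case $\lambda=\varnothing$ follows directly from (I), noting that $\varnothing$ is of type (a) for $q=0$ (where (I) and (II) agree on $T_0(\varnothing)=\Box$) and of type (c) for $q\neq 0$ (where (I) gives $T_q(\varnothing)=0$, matching (III)). For the inductive step each case is reduced to smaller partitions via exactly one of the three defining relations $T_q^2=0$, $T_qT_p=T_pT_q$ for $|p-q|>1$, and $T_qT_{q\pm 1}T_q=T_q$.

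Cases (b) and (c) are routine. For (b), set $\mu:=\lambda\boxminus q$; this is of type (a), so $\lambda=T_q(\mu)$ by (II) and hence $T_q(\lambda)=T_q^2(\mu)=0$. For (c), if $\lambda\neq\varnothing$, pick any corner of $\lambda$ of content $p$; the hypothesis on contents of boxes of $\lambda$ forces $|p-q|>1$. Commutation together with (II) applied to the type-(a) partition $\lambda\boxminus p$ then gives $T_q(\lambda)=T_pT_q(\lambda\boxminus p)$. The induction hypothesis applied to $\lambda\boxminus p$—which is either empty (with $q\neq 0$, so $T_q(\varnothing)=0$ by (I)) or still of type (c) for $q$, since removing a $p$-box with $p\notin\{q-1,q,q+1\}$ preserves the hypothesis—yields $T_q(\lambda\boxminus p)=0$, whence $T_q(\lambda)=0$.

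Cases (d) and (e) are the main obstacle. They are handled by parallel arguments related by the row–column duality $\lambda\leftrightarrow\lambda^t$, $q\leftrightarrow -q$, which exchanges (d) and (e) and precisely matches the two claimed formulas (rim hooks ``starting at $q+1$'' in $\lambda$ correspond to rim hooks ``ending at $-q-1$'' in $\lambda^t$), so I will spell out only case (d). The key relation is $T_qT_{q+1}T_q=T_q$. Let $(i,i+q)$ denote the rim $q$-box of $\lambda$. The clean subcase is $\lambda_i=i+q+1$, when $(i,i+q+1)$ is a corner: setting $\mu:=\lambda\boxminus(q+1)$ and $\nu:=\mu\boxminus q$, I will verify that $\mu$ is type (a) for $q+1$ (with $\mu\boxplus(q+1)=\lambda$) and that $\nu$ is type (a) for $q$ (with $\nu\boxplus q=\mu$; this uses that the rim $q$-box of $\nu$ lies at $(i-1,i+q-1)$ and has both neighbouring boxes, hence the addable $q$-box appears at $(i,i+q)$). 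Two applications of (II) give $\lambda=T_{q+1}T_q(\nu)$, so the TL relation yields $T_q(\lambda)=T_qT_{q+1}T_q(\nu)=T_q(\nu)=\mu$; and $\mu$ is indeed $\lambda$ with the size-one balanced rim hook $\{(i,i+q+1)\}$ removed.

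For the remaining subcase $\lambda_i\geq i+q+2$, the row-$i$ corner at $(i,\lambda_i)$ has content $p=\lambda_i-i\geq q+2$, so $|p-q|>1$. Commutation and (II) give $T_q(\lambda)=T_pT_q(\lambda\boxminus p)$, with $\lambda\boxminus p$ still of type (d) for $q$, and the induction hypothesis determines $T_q(\lambda\boxminus p)$. The delicate—and genuinely technical—step is the combinatorial verification that applying $T_p$ to this result gives the correct answer: namely, that the minimal balanced rim hook of $\lambda$ starting at $q+1$, when it exists, is obtained from that of $\lambda\boxminus p$ by a controlled modification near the corner $(i,\lambda_i)$, so that $T_p$ re-inserts exactly the right box, and that nonexistence of such a rim hook passes cleanly between $\lambda$ and $\lambda\boxminus p$ (so that $T_p(0)=0$ gives the correct $T_q(\lambda)=0$). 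This rim-hook bookkeeping under successive corner removal is the principal hurdle, but it is tractable because only contents $\geq q+1$ are involved and a single corner removal affects the rim only locally.
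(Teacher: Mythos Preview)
Your overall strategy---simultaneous induction on $|\lambda|$, removing the row-$i$ corner in case (d)---is correct and coincides with the paper's approach (which organises it as four lemmata rather than a single induction). Cases (b), (c), and the subcase $\lambda_i=i+q+1$ of (d) are handled exactly as the paper does.

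There is, however, a genuine gap in your subcase $\lambda_i\ge i+q+2$ when $p=q+2$ (that is, $\lambda_i=i+q+2$). You assert that ``$T_p$ re-inserts exactly the right box'' and that ``nonexistence of such a rim hook passes cleanly between $\lambda$ and $\lambda\boxminus p$''. Both are false at this borderline. Take $i=1$, $\lambda_1=q+3$, $\lambda_2\le q$: the only rim hook of $\lambda$ starting at $q+1$ is $\{(1,q+2),(1,q+3)\}$, of height~$1$ and width~$2$, hence not balanced, so (IV) predicts $T_q(\lambda)=0$. Yet $\lambda\boxminus(q+2)$ \emph{does} have a balanced such hook (the single box $(1,q+2)$), so $T_q(\lambda\boxminus(q+2))=\nu\neq 0$. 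To recover $T_q(\lambda)=0$ you must argue $T_{q+2}(\nu)=0$, which holds because $\nu$ is of type (c) for $q+2$---not by any failure of a rim hook in $\lambda\boxminus p$.

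More generally, when $p=q+2$ the partition $\nu=T_q(\lambda\boxminus p)$ is of type (c) (if $i=1$) or again of type (d) (if $i>1$) for $q+2$, and one must apply (III) or (IV) to $\nu$---legitimate since $|\nu|<|\lambda|$---rather than (II). The paper separates this case into its own lemma precisely because the mechanism is qualitatively different: for $p\ge q+3$ your ``re-insertion via (II)'' picture and the clean transfer of (non)existence of balanced rim hooks are indeed correct; for $p=q+2$ one instead recurses into (III)/(IV) and must check that the minimal balanced rim hooks of $\lambda$ starting at $q+1$ correspond to those of $\nu$ starting at $q+3$ by adjoining the two removed boxes.
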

We prove this in four lemmata and denote by~$\Omega$ an arbitrary representation of~$\TL_\infty(0)$ on~$\PA_\mZ$.

\begin{lemma}\label{Lem1}
Assume that~$\Omega$ satisfies (I) and~(II), then it satisfies (III).
\end{lemma}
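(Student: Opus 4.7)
The plan is to split the verification into the two types and handle each using a different defining relation of $\TL_\infty(0)$. Case (b) will be immediate from $T_q^2 = 0$, while case (c) will be handled by induction on $|\lambda|$ using the commutation relation $T_pT_q = T_q T_p$ for $|p-q|\geq 2$.

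For case (b), the idea is that any $\lambda$ of type (b) is itself in the image of $T_q$: setting $\mu = \lambda \boxminus q$, the partition $\mu$ admits an addable $q$-box and is therefore of type (a), so hypothesis (II) gives $T_q(\mu) = \lambda$. Applying the relation $T_q^2 = 0$ then yields
$$T_q(\lambda) \;=\; T_q^2(\mu) \;=\; 0.$$

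For case (c), one first observes that the type-(c) condition forces $q \neq 0$: any non-empty $\lambda$ contains the box $(1,1)$ of content $0$, so type (c) with $q = 0$ would require $\lambda = \varnothing$, which is explicitly excluded. The proof then proceeds by induction on $|\lambda|$. The base case $\lambda = \varnothing$ (with $q \neq 0$) follows immediately from hypothesis (I). For the inductive step, choose any removable box of $\lambda$ and call its content $p$; since $\lambda$ has no box of content in $\{q-1,q,q+1\}$, one has $|p-q|\geq 2$, so $T_p$ and $T_q$ commute. Writing $\mu = \lambda \boxminus p$, one has $T_p(\mu) = \lambda$ by (II) applied to $\mu$ (which is of type (a) with respect to $p$), and $\mu$ remains of type (c) with respect to $q$ because removing a box of content $p \notin \{q-1,q,q+1\}$ does not introduce a box of forbidden content. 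By the inductive hypothesis, $T_q(\mu) = 0$, and hence
$$T_q(\lambda) \;=\; T_qT_p(\mu) \;=\; T_pT_q(\mu) \;=\; 0.$$

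The only point requiring care is verifying that the reduction in the inductive step stays within type (c); this amounts to the easy observation that removing a box whose content is at distance at least two from $q$ cannot create an addable $q$-box nor a box of content $q-1,q,q+1$. No genuine obstacle is expected—the combinatorial bookkeeping is straightforward once one notices that a removable box of a type-(c) partition automatically has content far from $q$, which is precisely what enables the commutation relation.
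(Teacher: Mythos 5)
Your proof is correct and follows essentially the same route as the paper's: case (b) is handled identically via $T_q^2=0$, and your induction for case (c) is just a step-by-step repackaging of the paper's one-shot argument, which writes $\lambda=T_{p_1}\cdots T_{p_k}(\varnothing)$ with all $p_i\notin\{q-1,q,q+1\}$ and commutes $T_q$ through to hit $T_q(\varnothing)=0$. The small points you flag (that $q\neq 0$ in type (c), and that a removable box of a type-(c) partition has content at distance at least $2$ from $q$) are exactly the ones the paper also relies on.
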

\begin{proof}
Assume first that~$\lambda$ has a removable~$q$-box (type (b)). Then (II) implies that~$\lambda=T_q(\mu)$ for $\mu=\lambda\boxminus q$. Hence we find $T_q(\lambda)=T^2_q(\mu)=0$. 

Now assume that~$\lambda$ is of type (c). Then~$\lambda=T_{p_1}T_{p_2}\cdots T_{p_k}(\varnothing)$, with~$k=|\lambda|$ and each $p_i\not\in\{q-1,q,q+1\}$ by (II). The Temperley-Lieb relations thus imply that
$$T_q(\lambda)\;=\;T_{p_1}T_{p_2}\cdots T_{p_k}T_q(\varnothing). $$
As we can clearly assume that~$q\not=0$, this must be zero by (I), which concludes the proof.
\end{proof}

If $\lambda$ is of type (d) for~$q$, we let~$t\in\mN$ denote the maximal number such that there is a box in~$\lambda$ with content $q+t+1$ on the row of the rim $q$-box. We then specify that~$\lambda$ {\em is of type (d,[$r$,$t$])}, with~$r=|\lambda|$.

\begin{lemma}\label{Lem2}
Assume that~$\Omega$ satisfies (II), then it satisfies condition (IV) for all $\lambda$ of types (d,$[r,0]$) and~(d,$[0,t]$).
\end{lemma}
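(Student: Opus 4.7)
The strategy is to reduce each of the two base configurations to a type (a) situation (where property (II) applies directly), and then to collapse the resulting words of $T_p$'s using the Temperley-Lieb relations $T_q T_{q\pm 1} T_q = T_q$ and the commutation $T_p T_{p'} = T_{p'} T_p$ for $|p-p'|>1$. Property (II) is the only concrete input on values of the $T_p$, so every identification of $T_q(\lambda)$ must come from rewriting $\lambda$ as a word of $T_p$'s applied to a partition of type (a).

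For $\lambda$ of type $(d,[r,0])$, so $t=0$: let $(i,j)$ be the position of the rim $q$-box of $\lambda$, so $j-i=q$. Since $t=0$ and $\lambda$ is of type (d), we have $\lambda_i = j+1$ and $\lambda_{i+1} \le j-1$. Hence removing the $q{+}1$-box at $(i,j{+}1)$ produces a valid partition $\mu$, and then removing the $q$-box at $(i,j)$ produces a valid partition $\nu$. By construction, $\nu$ is of type (a) for $q$ and $\mu$ is of type (a) for $q{+}1$, so (II) gives $T_q(\nu)=\mu$ and $T_{q+1}(\mu)=\lambda$. Therefore $\lambda = T_{q+1}T_q(\nu)$, and applying $T_q$ combined with the relation $T_qT_{q+1}T_q=T_q$ yields
\begin{equation*}
T_q(\lambda) \;=\; T_q T_{q+1} T_q(\nu) \;=\; T_q(\nu) \;=\; \mu.
\end{equation*}
Since $\mu = \lambda \setminus \{(i,j{+}1)\}$, and the single $q{+}1$-box is a balanced $1\times 1$ rim hook (necessarily the minimal one starting at $q{+}1$), this matches condition (IV).

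For $\lambda$ of type $(d,[0,t])$, the same template applies: peel off the boxes of content $q,q{+}1,\ldots,q{+}t{+}1$ along the row of the rim $q$-box to obtain a partition $\nu$ of type (a) for $q$; by iterating (II) this expresses $\lambda$ as $T_{q+t+1}T_{q+t}\cdots T_{q+1}T_q(\nu)$. Applying $T_q$ on the left and simplifying with the commutations $T_q T_p = T_p T_q$ for $p \ge q{+}2$ together with $T_q T_{q+1} T_q = T_q$ produces a single word that identifies $T_q(\lambda)$ as the partition obtained by removing the minimal balanced rim hook starting at $q{+}1$. The main obstacle is the book-keeping in this second case: one must verify that every intermediate partition really is of type (a) for the content being added (so that (II) is applicable), and that the final TL simplification removes precisely the boxes constituting the minimal balanced rim hook predicted by (IV) — both of which are local combinatorial checks near the rim $q$-box.
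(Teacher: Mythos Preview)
Your treatment of type $(d,[r,0])$ is correct and essentially identical to the paper's argument.

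Your treatment of type $(d,[0,t])$ misreads the notation. In the paper, the pair $[r,t]$ records $r=|\lambda|$ and the row-length parameter $t$; so type $(d,[0,t])$ means $|\lambda|=0$, i.e.\ $\lambda=\varnothing$. Since the empty partition is never of type~(d), this case is vacuous --- the paper's proof of this half is a single sentence. What you have sketched instead (peeling off the boxes of contents $q,q{+}1,\dots,q{+}t{+}1$ along the rim row, then simplifying a long TL word) is an attempt at the general $t>0$ situation, which in the paper is handled separately by induction on $r$ in Lemmata~\ref{Lem3} and~\ref{Lem4}. Note also that your direct reduction does not quite go through as written: after commuting $T_q$ past $T_{q+t+1},\dots,T_{q+2}$ and collapsing $T_qT_{q+1}T_q$ to $T_q$, you are left with $T_{q+t+1}\cdots T_{q+2}$ acting on $\nu\boxplus q$, and this partition typically does \emph{not} have an addable $(q{+}2)$-box (the $(q{+}1)$-slot on the rim row is empty while the row above is still long), so~(II) alone cannot evaluate the remaining word. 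This is precisely why the paper proceeds by induction on $|\lambda|$ rather than by a single TL simplification.
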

\begin{proof}
If $\lambda$ is of type (d,$[r,0]$), then~$\lambda$ has a removable~$q+1$-box, so by (II) we have $\lambda=T_{q+1}(\mu)$, with~$\mu=\lambda\boxminus (q+1)$. Furthermore, $\mu$ has a removable~$q$-box, so 
$$\lambda=T_{q+1}T_q(\nu)$$
with~$\nu$ obtained from~$\lambda$ by removing the~$q$ and~$q+1$ boxes on its rim. We thus find
$$T_q(\lambda)=T_qT_{q+1}T_q(\nu)=T_q(\nu)=\mu.$$
In conclusion, $T_q(\lambda)=\lambda\boxminus (q+1)$. Clearly, that removed rim $q+1$-box is the minimal balanced rim hook starting at~$q+1$.

The case~(d,$[0,t]$) is empty, since~$\lambda=\varnothing$ is never of type (d). 
\end{proof}

\begin{lemma}\label{Lem3}
Assume that~$\Omega$ satisfies (II), (III) in general and~(IV) for all partitions of type (d,$[r',-]$) with~$r'<r$, then it satisfies condition (IV) for~$\lambda$ of type (d,$[r,1]$).
\end{lemma}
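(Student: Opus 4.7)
The plan is to use the Temperley-Lieb relations to reduce $T_q(\lambda)$ to $T_{q+2}$ applied to a smaller partition and then invoke the inductive hypothesis on (IV). Write the rim $q$-box of $\lambda$ at position $(i,j)$ with $j=i+q$. The type (d,$[r,1]$) hypothesis means that row $i$ of $\lambda$ ends in column $j+2$ and that row $i+1$ has length strictly less than $j$. In particular $(i,j+2)$ is the unique removable $(q+2)$-box of $\lambda$, so by (II) we obtain $\lambda=T_{q+2}(\mu)$ with $\mu:=\lambda\boxminus(q+2)$. Since $|q-(q+2)|=2>1$, the generators $T_q$ and $T_{q+2}$ commute, hence $T_q(\lambda)=T_{q+2}T_q(\mu)$. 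A direct check shows that $\mu$ is of type (d,$[r-1,0]$) for $q$, so Lemma~\ref{Lem2} yields $T_q(\mu)=\mu\boxminus(q+1)=:\nu$, and therefore
$$T_q(\lambda)\;=\;T_{q+2}(\nu),$$
where $\nu$ is obtained from $\lambda$ by deleting the two boxes $(i,j+1)$ and $(i,j+2)$ of row $i$.

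To compute $T_{q+2}(\nu)$ one distinguishes two cases. If $i=1$, then $\lambda_1=q+3$, $\lambda_2<q+1$ (and $q\geq 0$), so every box of $\nu$ has content at most $q$. Hence $\nu$ is of type (c) for $q+2$ and (III) gives $T_{q+2}(\nu)=0$; on the other hand the only rim hook of $\lambda$ with minimum content $q+1$ is $\{(1,q+2),(1,q+3)\}$, which has height $1$ and width $2$ and is therefore not balanced, matching the right-hand side of (IV). If $i\geq 2$, then the inequality $\lambda_{i-1}\geq \lambda_i=j+2$ guarantees that $(i-1,j+1)$ is the rim $(q+2)$-box of $\nu$, with a right neighbour in $\nu$ (namely $(i-1,j+2)$) but no box below (since $(i,j+1)\notin\nu$). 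Thus $\nu$ is of type (d,$[r-2,t']$) for $q+2$ with $t'=\lambda_{i-1}-i-q-2\geq 0$, and as $|\nu|=r-2<r$ the inductive hypothesis on (IV) delivers $T_{q+2}(\nu)$ in terms of the minimal balanced rim hook of $\nu$ starting at $q+3$.

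The final step is to match this rim-hook data with the statement of (IV) for $\lambda$. Concretely, the assignment $H\mapsto \tilde H:=\{(i,j+1),(i,j+2)\}\cup H$ is a bijection between balanced rim hooks of $\nu$ with minimum content $q+3$ and balanced rim hooks of $\lambda$ with minimum content $q+1$, which increases both height and width by one. The nontrivial direction is that any balanced rim hook of $\lambda$ starting at $q+1$ must contain both $(i,j+1)$ and $(i,j+2)$: the first because it is the rim $(q+1)$-box, and the second because removing only $(i,j+1)$ would break the partition condition as $(i,j+2)\in\lambda$. This bijection preserves minimality, and since removing $\tilde H$ from $\lambda$ is the same as removing $H$ from $\nu$, the identity $T_q(\lambda)=T_{q+2}(\nu)$ yields exactly the conclusion of (IV) for $\lambda$. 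The main obstacle is precisely this rim-hook bookkeeping: pinning down the shape of $H$ when its height and width equal $k$ (it occupies rows $i-k,\ldots,i-1$ and columns $j+2,\ldots,j+k+1$) and checking that appending the two boxes in row $i$ produces a genuine balanced rim hook of $\lambda$ of height and width $k+1$.
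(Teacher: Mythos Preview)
Your proof is correct and follows essentially the same approach as the paper: reduce $T_q(\lambda)$ to $T_{q+2}(\nu)$ via the commutation relation, split into the cases $i=1$ (use (III)) and $i\ge 2$ (use the inductive hypothesis on (IV) for $\nu$), and then set up the bijection $H\mapsto H\cup\{(i,j+1),(i,j+2)\}$ between the relevant balanced rim hooks. The only cosmetic differences are that you work in explicit coordinates throughout and invoke Lemma~\ref{Lem2} directly for the type (d,$[r-1,0]$) step, whereas the paper cites the inductive hypothesis there; both are valid since Lemma~\ref{Lem2} already covers all (d,$[\ast,0]$) cases.
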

\begin{proof}
By assumption on~$\lambda$ and~(II) we have $\lambda=T_{q+2}(\mu)$, with~$\mu=\lambda\boxminus (q+2)$. Hence we have
$$T_q(\lambda)=T_{q}T_{q+2}(\mu)=T_{q+2}T_{q}(\mu).$$
Furthermore $\mu$ is of type (d,$[r\,\minus\,1,0]$), so (IV) holds true which means $T_q(\mu)=\nu$, with~$\nu=\mu\boxminus (q+1)$. Hence, we have
$$T_q(\lambda)\;=\;T_{q+2}(\nu).$$
We review the two possibilities for~$\nu$. 

If the rim $q$-box of~$\lambda$ was on the highest row, then~$\nu$ contains no box with content in~$\{q+1,q+2,q+3\}$, so $T_q(\lambda)=T_{q+2}(\nu)=0$ by (III). In this case, $\lambda$ has no balanced rim hook starting at~$q+1$, so (IV) is indeed satisfied for~$\lambda$. 

If there is a row above the rim $q$-box, then~$\nu$ is clearly again of type (d), now for~$q+2$. Furthermore, $|\nu|<|\lambda|=r$ so $\nu$ satisfies (IV). Moreover, we have a clear one-to-one correspondence between the rim hooks of~$\nu$ starting at~$q+3$ and the rim hooks of~$\lambda$ starting at~$q+1$, by adding the rim $q+1$ and $q+2$-boxes in~$\lambda$ to the former hook. This correspondence preserves the notion of balancedness. Hence we find that~$\lambda$ satisfies (IV).
\end{proof}

\begin{lemma}\label{Lem4}
Assume that~$\Omega$ satisfies (II) in general and~(IV) for all partitions  of type (d,$[r',-]$) with~$r'<r$, then it satisfies condition (IV) for~$\lambda$ of type (d,$[r,t]$) with~$t>1$.
\end{lemma}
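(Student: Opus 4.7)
The plan is to mirror the strategy of Lemmas~\ref{Lem2} and~\ref{Lem3}, peeling off the rightmost box of the row of the rim $q$-box. Since $t>1$, that box has content $q+t+1$ and is removable, so by~(II) I would write $\lambda=T_{q+t+1}(\mu)$ with $\mu:=\lambda\boxminus(q+t+1)$ of type (d,$[r-1,t-1]$); as $r-1<r$, the inductive hypothesis applies to $\mu$. Because $|q-(q+t+1)|=t+1\ge 3$, the Temperley--Lieb commutativity yields
\[
T_q(\lambda)\;=\;T_{q+t+1}\bigl(T_q(\mu)\bigr).
\]

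The main combinatorial step is to set up a height-preserving, minimality-preserving bijection between the balanced rim hooks of $\mu$ starting at $q+1$ and those of $\lambda$ starting at $q+1$. Write $(i_0,j_0)$ for the position of the rim $q$-box, so $\lambda_{i_0}=j_0+t+1$, $\mu_{i_0}=j_0+t$, and $\lambda_k=\mu_k$ for $k\ne i_0$. Any rim hook starting at $q+1$ must contain the entire right-tail of row $i_0$ (otherwise removal would leave a gap), forcing width $\ge t\ge 2$ in $\mu$ and $\ge t+1\ge 3$ in $\lambda$; thus any \emph{balanced} such hook has height $\ge 2$. For height $h\ge 2$, the ribbon is pinned down by the row lengths $\lambda_{i_0-h+1},\ldots,\lambda_{i_0-1}$, which coincide in $\mu$ and $\lambda$; the balancedness condition reads $\lambda_{i_0-h+1}=j_0+h$ in either case, and the corresponding hooks $H_\mu\leftrightarrow H_\lambda$ differ only by the swap of the boxes $(i_0,j_0+t+1)\in H_\lambda$ and $(i_0-1,j_0+t)\in H_\mu$.

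With the bijection established, split into two cases. If neither $\mu$ nor $\lambda$ admits such a balanced rim hook, the inductive hypothesis gives $T_q(\mu)=0$, so $T_q(\lambda)=T_{q+t+1}(0)=0$, which is precisely what (IV) asserts for $\lambda$. Otherwise, let $H_\mu$ and $H_\lambda$ be the minimal balanced rim hooks starting at $q+1$ in $\mu$ and $\lambda$. The box swap implies
\[
\lambda\setminus H_\lambda\;=\;(\mu\setminus H_\mu)\cup\{(i_0-1,j_0+t)\},
\]
and the added box, of content $q+t+1$, is addable to $\mu\setminus H_\mu$: its left neighbour $(i_0-1,j_0+t-1)$ is present because row $i_0-1$ of $\mu\setminus H_\mu$ has length $j_0+t-1$, and when $i_0\ge 3$ the upper neighbour $(i_0-2,j_0+t)$ is present because $\lambda_{i_0-2}\ge\lambda_{i_0}=j_0+t+1$. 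By~(II) then $T_{q+t+1}(\mu\setminus H_\mu)=\lambda\setminus H_\lambda$, so $T_q(\lambda)=\lambda\setminus H_\lambda$, as required by~(IV).

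The hardest part will be verifying the rim hook bijection carefully: one must confirm that the ribbon structure, the existence conditions, and the balancedness condition for hooks starting at $q+1$ genuinely match between $\mu$ and $\lambda$ despite their differing row $i_0$. This hinges on the observation that, for $t\ge 2$, all such balanced hooks must have height $\ge 2$, and their controlling data then sits on rows above $i_0$, where $\mu$ and $\lambda$ agree.
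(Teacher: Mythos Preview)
Your argument is correct and follows essentially the same route as the paper's proof: write $\lambda=T_{q+t+1}(\mu)$, commute $T_q$ past $T_{q+t+1}$, apply the induction hypothesis to~$\mu$, and match balanced rim hooks starting at $q+1$ in $\lambda$ and $\mu$ via the swap of the $q{+}t{+}1$-box. Your treatment is in fact more explicit than the paper's, which simply asserts the ``obvious one-to-one correspondence'' and that $T_q(\mu)$ admits an addable $q{+}t{+}1$-box. One minor remark: your check that the upper neighbour $(i_0-2,j_0+t)$ lies in $\mu\setminus H_\mu$ via $\lambda_{i_0-2}\ge\lambda_{i_0}$ tacitly assumes row $i_0-2$ is untouched by $H_\mu$, i.e.\ $h=2$; for $h\ge 3$ the length of that row in $\mu\setminus H_\mu$ is $\lambda_{i_0-1}-1\ge j_0+t$, which still suffices --- or more directly, addability is automatic since $(\mu\setminus H_\mu)\cup\{(i_0-1,j_0+t)\}=\lambda\setminus H_\lambda$ is already known to be a partition.
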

\begin{proof}
We have $\lambda=T_{q+t+1}(\mu)$, with~$\mu=\lambda\boxminus (q+t+1)$. We thus have
$$T_q(\lambda)=T_qT_{q+t+1}(\mu)=T_{q+t+1}T_q(\mu),$$
where now $\mu$ is of type (d,$[r\,\minus\,1,t\,\minus\,1]$) for~$q$, and thus satisfies (IV). Hence, by assumption, $T_{q}(\mu)$ is obtained from~$\mu$ by removing the minimal balanced rim hook starting at~$q+1$, if it exists and zero otherwise. There is an obvious one-to-one correspondence between the rim hooks starting at~$q+1$ for~$\lambda$ and~$\mu$, corresponding to `moving' the~$q+t+1$-box of the hook. This correspondence thus preserves the notion of balancedness. 

If $\lambda$ does not have a balanced rim hook starting at~$q$, we thus find $T_q(\lambda)=T_{q+t+1}T_q(\mu)=0$ since~$\mu$ satisfies (IV). If $\lambda$ does have a balanced rim hook starting at~$q$, then~$T_q(\mu)$ is obtained from~$\mu$ by removing its minimal rim balanced rim hook starting at~$q$. By construction~$T_q(\mu)$ then allows an addable~$q+t+1$-box and~$T_q(\lambda)=T_{q+t+1}T_q(\mu)$ is obtained by adding this box by (II). Hence also in this case, $\lambda$ satisfies indeed (IV).
\end{proof}

\begin{proof}[Proof of Proposition~\ref{PropUnique}]
By Lemma~\ref{Lem1}, (III) is satisfied. By Lemma~\ref{Lem2}, (IV) is satisfied for all partitions of types (d,$[r,0]$) and~(d,$[0,t]$). Lemmata~\ref{Lem3} and~\ref{Lem4} then allow to prove (IV) in general by induction on~$r$.

The proof of (V) is completely symmetrical to that of (IV).
\end{proof}

\subsection{Existence of the representation}
First we construct the Fock space representation.
\begin{lemma}\label{LemEasyRep}
We have a representation~$\Xi'$ of~$\TL_\infty(0)$ on~$\PA_\mZ$, determined by
$$T_q(\lambda)\;=\;\sum_{\mu}\bb^q_{\lambda\mu}\,\mu,\qquad\mbox{for all $\lambda\in\PA$}.$$
\end{lemma}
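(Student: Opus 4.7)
My plan is to verify the three families of Temperley--Lieb relations ($T_q^2 = 0$, $T_p T_q = T_q T_p$ for $|p - q| > 1$, and $T_q T_{q \pm 1} T_q = T_q$) by encoding partitions via their Maya diagrams so that the action of $T_q$ becomes a purely local bit manipulation.

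To any $\lambda \in \PA$ I attach the indicator sequence $s(\lambda) = (s_c)_{c \in \mZ} \in \{0,1\}^\mZ$ of the set $\{\lambda_i - i : i \geq 1\}$; this is the standard abacus/Maya description. Under this encoding, $\lambda$ has an addable $c$-box iff $(s_{c-1}, s_c) = (1,0)$ and a removable $c$-box iff $(s_{c-1}, s_c) = (0,1)$, and in each case the corresponding box operation is exactly the swap of those two adjacent bits. Consequently $T_q$, which sums the addition of a $q$-box and the removal of a $(q-1)$-box, depends only on the triple $(s_{q-2}, s_{q-1}, s_q)$ and acts by
\[
010 \mapsto 001 + 100, \quad 011 \mapsto 101, \quad 110 \mapsto 101, \quad \text{all other patterns} \mapsto 0.
\]

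From this table, $T_q^2 = 0$ is immediate, since the output patterns $001$, $100$, $101$ all lie in the kernel of $T_q$. The commutativity $T_p T_q = T_q T_p$ for $|p - q| \geq 3$ is trivial from disjointness of the respective three-bit windows, while the borderline case $|p - q| = 2$ (where the windows overlap in one bit) reduces to a finite check on a five-bit window.

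The main obstacle is the braid relation $T_q T_{q \pm 1} T_q = T_q$, whose operators share a window of four consecutive bits. The straightforward route is to enumerate the $2^4$ configurations of $(s_{q-2}, s_{q-1}, s_q, s_{q+1})$ and verify pointwise. A cleaner conceptual route is the fermionic realization: identify the Maya diagrams with basis vectors of a half-infinite wedge and use creation/annihilation operators $\psi_c, \psi_c^*$ with canonical anticommutation relations $\{\psi_c, \psi_d^*\} = \delta_{cd}$ and $\{\psi_c, \psi_d\} = 0 = \{\psi_c^*, \psi_d^*\}$. Then $T_q = (\psi_q + \psi_{q-2}) \psi_{q-1}^*$; the relations $T_q^2 = 0$ and $T_p T_q = T_q T_p$ for $|p - q| > 1$ become one-line consequences of CAR, and expanding $T_q T_{q+1} T_q$ using $\{\psi_{q-1}^*, \psi_{q-1}\} = 1$ together with $\psi_{q-1}^{*2} = 0$ and $(\psi_q + \psi_{q-2})^2 = 0$ collapses all correction terms and leaves exactly $T_q$, with the $T_q T_{q-1} T_q$ case being entirely symmetric.
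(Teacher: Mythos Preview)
Your proposal is correct. The paper itself offers essentially no proof here---it simply declares the lemma an ``easy combinatorial exercise'' and points to \cite[Proposition~2.3.1]{PB2}---so your Maya-diagram reduction and fermionic computation constitute a genuine execution of that exercise rather than a competing approach.

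A few remarks on what you gain. The local three-bit description of $T_q$ is exactly the right bookkeeping device and makes $T_q^2=0$ and the distant commutation transparent; for $|p-q|=2$ and for the braid relation you defer to a finite enumeration, which is fine but not written out. Your second route via $T_q=(\psi_q+\psi_{q-2})\psi_{q-1}^\ast$ in the Clifford algebra is the cleaner argument and is complete as stated: the identities $(\psi_{q-1}^\ast)^2=0$, $(\psi_q+\psi_{q-2})^2=0$, and the single nontrivial anticommutator $\{\psi_{q-1}^\ast,\psi_{q-1}\}=1$ really do collapse $T_qT_{q\pm 1}T_q$ to $T_q$ after one expansion. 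The only point worth making explicit is that the fermionic operators $\psi_c\psi_{c-1}^\ast$ and $\psi_{c-1}\psi_c^\ast$ act on the semi-infinite wedge with sign $+1$ (the two sign contributions cancel because both the removed and inserted factor sit at the same position in the ordered wedge), so that your Clifford element really does realise the matrix $\bb^q$ and not a signed variant. This fermionic picture is in fact the one the paper invokes later, in Section~5.5, when it writes $\Xi'(T_i)=\Phi(e_i+f_{i-1})$ for the Fock representation of~$\mathfrak{sl}_\infty$.
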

\begin{proof}
This is an easy combinatorial exercise, see also the proof of \cite[Proposition~2.3.1]{PB2}.
\end{proof}

We can identify the matrix $\cc$ with an automorphism of~$\PA_\mZ$, defined by
\begin{eqnarray}\label{eq:mat_to_endo}
\lambda\;\mapsto\;\sum_{\mu}\cc_{\lambda\mu}\mu.
\end{eqnarray}
Note that the above summation is finite, by~\ref{SecCell}. We twist the representation in Lemma~\ref{LemEasyRep} by this automorphism and use Proposition~\ref{Propaq}.
\begin{prop}
\label{PropExis}
The representation of~$\TL_\infty(0)$ on~$\PA_\mZ$ defined by
$$T_q(\nu)\;=\;\sum_{\lambda,\mu,\kappa}\cc_{\nu\lambda}\bb^q_{\lambda\mu}\cc^{\minus1}_{\mu\kappa}\,\kappa\;=\;\sum_\kappa \aac^q_{\nu\kappa}\kappa,$$
satisfies~$T_q(\varnothing)=\delta_{q0}\Box$ and~$T_q(\lambda)=\lambda\boxplus q$ if $\lambda$ has an addable~$q$-box.
\end{prop}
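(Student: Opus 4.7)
The plan is to exhibit $\Xi$ as a conjugate of the Fock-space representation $\Xi'$ from Lemma \ref{LemEasyRep} by the automorphism of $\PA_\mZ$ induced by the matrix $\cc$, and then verify the two specific identities on $\varnothing$ and on a partition with an addable $q$-box.

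First I would identify $\cc$ with the endomorphism $C:\PA_\mZ\to\PA_\mZ$ defined in \eqref{eq:mat_to_endo}; the size-triangular structure of $\cc$ with unit diagonal makes $C$ invertible, with inverse described by $\cc^{-1}$. Proposition \ref{Propaq} then identifies the formula in the statement as $\Xi(T_q)=C^{-1}\circ\Xi'(T_q)\circ C$. Since conjugation by an invertible element of $\End_\mZ(\PA_\mZ)$ preserves all defining relations of $\TL_\infty(0)$, this immediately shows that $\Xi$ is a representation.

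Next I would verify $T_q(\varnothing)=\delta_{q0}\Box$ by direct substitution. Using that $\cc_{\varnothing\lambda}=\delta_{\varnothing\lambda}$, the triple sum collapses to $\sum_{\mu,\kappa}\bb^q_{\varnothing\mu}\cc^{-1}_{\mu\kappa}\kappa$. The coefficient $\bb^q_{\varnothing\mu}$ is nonzero only for $q=0$ with $\mu=\Box$, so the expression reduces to $\delta_{q0}\sum_\kappa\cc^{-1}_{\Box\kappa}\kappa$. The support of $\cc^{-1}_{\Box,\cdot}$ is contained in $\{\varnothing,\Box\}$; one has $\cc^{-1}_{\Box\Box}=1$, while the defining identity of $\cc^{-1}$ gives $\cc^{-1}_{\Box\varnothing}=-\cc_{\Box\varnothing}$, and $\cc_{\Box\varnothing}=0$ since any $r$ with $\Box\in\Lambda_r$ is odd while $\varnothing\in\LL_r$ forces $r$ even, so $W_r(\varnothing)=0$ on every algebra where $L_r(\Box)$ lives.

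Turning to the second identity with $\tilde\lambda=\lambda\boxplus q$, Lemma \ref{Lemaa1} already yields $\aac^q_{\lambda\tilde\lambda}=1$, so everything reduces to showing $\aac^q_{\lambda\kappa}=0$ for $\kappa\ne\tilde\lambda$. The size-triangular and parity-compatible structures of $\cc$, $\bb^q$ and $\cc^{-1}$ restrict the support of $\aac^q_{\lambda,\cdot}$ to $\kappa$ with $|\kappa|\equiv|\lambda|+1\pmod 2$ and $|\kappa|\le|\lambda|+1$. The maximal-size case $|\kappa|=|\lambda|+1$ forces $\kappa=\tilde\lambda$ by the very degree argument in the proof of Lemma \ref{Lemaa1}. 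The main obstacle will be eliminating the remaining contributions with $|\kappa|\le|\lambda|-1$. For this I would rewrite the vanishing, using $\aac^q\cc=\cc\bb^q$, as the identity $(\cc\bb^q)_{\lambda\nu}=\cc_{\tilde\lambda\nu}$ for all $\nu$; Corollary \ref{Coraa1} already supplies the inequality $\ge$, and the reverse inequality would follow from a bijective combinatorial analysis on the set $\Gamma$ of \cite{PB2}, matching each $\nu\subseteq\tilde\lambda$ with $\tilde\lambda/\nu\in\Gamma$ to a unique pair $(\mu,\nu)$ with $\mu\subseteq\lambda$, $\lambda/\mu\in\Gamma$, and either $\nu=\mu\boxplus q$ or $\nu=\mu\boxminus(q-1)$, tracking how the addable $q$-box of $\lambda$ interacts with the $\Gamma$-skew structure. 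This combinatorial matching, building on the explicit description of $\Gamma$ in \cite{PB2}, is the principal technical step of the proof.
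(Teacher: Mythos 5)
Your setup is exactly the paper's: $\Xi$ is obtained by twisting the Fock space representation $\Xi'$ of Lemma~\ref{LemEasyRep} by the automorphism \eqref{eq:mat_to_endo}, which makes the representation property automatic; the computation of $T_q(\varnothing)$ by collapsing the triple sum is also the paper's argument (your extra remark that $\cc^{\minus 1}_{\Box\varnothing}=0$ is correct, since $\Gamma$ consists of skew shapes of even size). Likewise, the reduction of the second identity to \eqref{eq:two_sets}, i.e.\ $(\cc\,\bb^q)_{\lambda\nu}=\cc_{\tilde\lambda\nu}$ for all $\nu$, with the inequality $\ge$ supplied by Corollary~\ref{Coraa1}, is precisely how the paper proceeds.

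However, at that point you stop: you state that the reverse inequality ``would follow from a bijective combinatorial analysis on the set $\Gamma$'' and call it the principal technical step, without carrying it out. That step \emph{is} the proof --- the paper spends the bulk of the argument on it, decomposing $\lambda/\mu=\gamma_1\sqcup\dots\sqcup\gamma_r$, isolating the index $k$ at which the addable $q$-box interferes, and running a case analysis (Cases 1(i), 1(ii), 2, 3) on the boxes adjacent to the addable $q$-box, followed by a separate argument (counting $p$-boxes for $p\in\{q-1,q,q+1\}$ in $\tilde\lambda/\nu$ and examining the hook containing the $q-1$-box of minimal anticontent) to show the relevant multiset is multiplicity-free. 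Moreover, the matching you sketch goes in the wrong direction: assigning to each $\nu$ with $\tilde\lambda/\nu\in\Gamma$ a pair $(\mu,\nu)$ with $\lambda/\mu\in\Gamma$ would give an injection from the set counted by $\cc_{\tilde\lambda\nu}$ into the multiset counted by $(\cc\,\bb^q)_{\lambda\nu}$, i.e.\ the inequality $\cc_{\tilde\lambda\nu}\le(\cc\,\bb^q)_{\lambda\nu}$ that Corollary~\ref{Coraa1} already provides. What is needed is the opposite: an injection of the multiset $\mathcal{S}_1(\lambda)$ of outcomes of (remove a $\Gamma$-shape from $\lambda$, then add a $q$-box or remove a $(q-1)$-box) into the set $\mathcal{S}_2(\lambda)=\{\nu\,:\,\tilde\lambda/\nu\in\Gamma\}$, which in particular requires proving that $\mathcal{S}_1(\lambda)$ has no repeated elements. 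As it stands, the core of the proposition is asserted rather than proved.
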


\begin{proof}
Using the elementary properties of~$\cc$ in \ref{SecCell} and the definition of~$\bb^q$, we find
$$T_q(\varnothing)\;=\;\sum_{\mu,\kappa}\bb^q_{\varnothing\mu}\cc^{\minus1}_{\mu\kappa}\,\kappa\;=\;\delta_{q,0}\sum_{\kappa}\cc^{\minus1}_{\Box\kappa}\kappa\;=\; \delta_{q,0}\Box.$$
To prove the second relation, we need to show that, for all partitions $\tilde{\lambda}=\lambda\boxplus q$, we have
\begin{eqnarray} \label{eq:two_sets}
\sum_{\mu}\cc_{\lambda\mu}\bb^q_{\mu\nu}\;=\; \cc_{\tilde{\lambda}\nu},\qquad\mbox{for all $\nu\in\PA$}.
\end{eqnarray}
By Corollary~\ref{Coraa1}, we have $(\cc\bb^q)_{\lambda\nu}\ge \cc_{\tilde{\lambda}\nu}$, so we focus on the inequality in the other direction.

We first reformulate \eqref{eq:two_sets} combinatorially. We will assume the reader is familiar with the set~$\Gamma_0$ of connected hooks and the set~$\Gamma$ of skew Young diagrams introduced in~\cite[Section~3.3]{PB2}, which describe the matrix $\cc$. Let~$\mathcal{S}_1(\lambda)$ denote the {\em multiset} of partitions~$\nu$ obtained by the following procedure, first take a partition~$\mu\subseteq\lambda$ such that~$\lambda / \mu \in \Gamma$, then either add a $q$-box to~$\mu$ or remove a $(q-1)$-box from~$\mu$ to obtain the partition~$\nu$. This multiset is linked to the left-hand side of~\eqref{eq:two_sets}. Concretely, each~$\nu\in\PA$ appears $(\cc\bb^q)_{\lambda\nu}$ times in~$\mathcal{S}_1(\lambda)$. Let~$\mathcal{S}_2(\lambda)$ denote the set of partitions~$\nu\subseteq\tilde{\lambda}$ such that~$\tilde{\lambda} / \nu \in \Gamma$. This describes the right-hand side of~\eqref{eq:two_sets}. First we will show that each element in~$\mathcal{S}_1(\lambda)$ is also an element in~$\mathcal{S}_2(\lambda)$ and then secondly that~$\mathcal{S}_1(\lambda)$ is actually a set. In conclusion, we have $\mathcal{S}_1(\lambda)\subseteq\mathcal{S}_2(\lambda)$ and hence~$(\cc\bb^q)_{\lambda\nu}\le \cc_{\tilde{\lambda}\nu}$, which thus implies the proposition.

We start with the following observation, which follows from immediate application of the properties of~$\Gamma$. Let~$\mu$ be a partition such that~$\lambda / \mu = \gamma \in \Gamma$, with decomposition~$\gamma = \gamma_1 \sqcup \ldots \sqcup \gamma_r$, such that each $\gamma_i$ is a disjoint union of connected rim hooks belonging to $\Gamma_0$ in the partition~$\lambda\backslash ( \gamma_1 \sqcup \ldots \sqcup\gamma_{i-1})$.
 Under the assumption that~$\lambda$ has an addable~$q$-box, there is a $k$, $1 \leq k \leq r$, such that~$\gamma_1, \ldots , \gamma_{k-1}$ all contain a $q$, $(q-1)$ and~$(q+1)$-box, while~$\gamma_k$ does not contain a $q$-box, and~$\gamma_{k+1},\ldots,\gamma_r$ contain no boxes with any of the three contents.

Each $\gamma_1,\ldots, \gamma_{k-1}$ will thus contain a shape of the form
$$\begin{minipage}{1cm}\;\young(ab,c)\end{minipage},$$
with~$a$ being a $q$-box. By swapping $a$ for the~$q$-box below $b$ and to the right of~$c$ we thus obtain~$\gamma_1^\prime,\ldots,\gamma_{k-1}^\prime\in\Gamma_0$, such that the skew Young diagram $\gamma_1^\prime\sqcup\ldots\sqcup\gamma_{k-1}^\prime\in \Gamma$ is removable from~$\tilde{\lambda}$. 

From now on, to avoid additional notation, we denote by~$a$, $b$ and~$c$ the boxes, in the same configuration as above with~$a$ being a $q$-box, that are on the rim of the partition~$\lambda \setminus (\gamma_1 \sqcup \ldots \sqcup \gamma_{k-1})$. Furthermore denote by~$d$ the~$q$-box directly below $b$ and to the right of~$c$. By construction we know that~$\gamma_k$ does not contain~$a$, but may contain any of the other two boxes, and that~$d$ is directly adjacent to~$\gamma_{k-1}^\prime$ (it was the box in~$\gamma_{k-1}$ that was swapped for another box to obtain~$\gamma_{k-1}^\prime$). We treat the three possible cases one by one.

\textit{Case 1: $\gamma_k$ contains neither $b$ nor~$c$.} In this case, we can always add a $q$-box to $\mu$ (the box $d$) and sometimes it is possible to remove a $q-1$-box from~$\mu$ (the box $c$).

(i) If $\nu\in\mathcal{S}_1(\lambda)$ is obtained by adding the box $d$ to~$\mu$, we set~$\gamma_j^\prime = \gamma_j$ for~$j \geq k$ and obtain~$\nu = \tilde{\lambda}\setminus (\gamma_1^\prime \sqcup \ldots \sqcup \gamma_{r}^\prime)$, so $\nu\in\mathcal{S}_2(\lambda)$.

(ii) If $\nu\in\mathcal{S}_1(\lambda)$ is obtained by removing the~$(q-1)$-box $c$ from~$\mu$, we define $\gamma_k^\prime$ as the union of~$\gamma_{k}$ and the boxes $c$ and~$d$. By construction~$\gamma'_k$ is either an element of~$\Gamma_0$ or the disjoint union of two elements of~$\Gamma_0$. Furthermore, we set~$\gamma'_j=\gamma_j$ for~$j > k$ and we have $\nu = \tilde{\lambda} \setminus (\gamma_1^\prime \sqcup \ldots \sqcup \gamma_{r}^\prime)\in \mathcal{S}_2(\lambda)$.

\textit{Case 2: $\gamma_k$ contains $c$ but not $b$.} In this case it is obvious that one cannot add the~$q$-box $d$ to~$\mu$ and one can also not remove the~$(q-1)$-box directly to the left of~$a$ from~$\mu$. Thus this case will not produce any elements in~$\mathcal{S}_1(\lambda)$.

\textit{Case 3: $\gamma_k$ contains $b$ but not $c$.} As in Case 2, it is not possible to add the~$q$-box $d$, but it can be possible to remove the~$(q-1)$-box $c$. In case that this is possible we add the two boxes $c$ and~$d$ to~$\gamma_k$ as in Case 1 above to obtain~$\gamma_k^\prime$ and set~$\gamma'_j=\gamma_j$ for~$j > k$. Thus  $\nu = \tilde{\lambda} \setminus (\gamma_1^\prime \sqcup \ldots \sqcup \gamma_{r}^\prime)$.

In this way we have realised every element of~$\mathcal{S}_1(\lambda)$ as an element of~$\mathcal{S}_2(\lambda)$. 

Now we prove that~$\mathcal{S}_1(\lambda)$ is in fact a set, by showing that each element of~$\mathcal{S}_2(\lambda)$ can only be created in at most one of the above ways from the construction in the definition of~$\mathcal{S}_1(\lambda)$.
For this, note that in the different cases we obtain the following:
\begin{itemize}
\item {\em Case 1(i):} for~$p\in\{q-1,q,q+1\}$, the skew diagram $\tilde{\lambda}/\nu$ contains $k\,\minus\, 1$ $p$-boxes.
\item {\em Case 1(ii):} for~$p\in\{q-1,q\}$, the skew diagram $\tilde{\lambda}/\nu$ contains $k$ $p$-boxes and $k\,\minus\, 1$ $q+1$-boxes.
\item {\em Case 3:} for~$p\in\{q-1,q,q+1\}$, the skew diagram $\tilde{\lambda}/\nu$ contains $k$ $p$-boxes.
\end{itemize} 
Clearly there is no overlap between 1(ii) and the other cases. 
To distinguish elements obtained from Case 1(i) and 3, we look at the unique hook $\alpha$ in~$\Gamma_0$, in the covering (see \cite[3.3]{PB2}) of~$\tilde{\lambda}/\nu\in\Gamma$, which contains the~$q-1$-box with minimal anticontent. In case 1(i), we have $\alpha\subset\gamma'_{k-1}$ and the fact that the connected hooks in~$\gamma_{k-1}$ must satisfy the D-condition in~\cite[Definition~3.3.4]{PB2} shows that~$\gamma_{k-1}$ and also $\alpha$ contains a $q-2$-box. In Case 3, we have $\alpha\subset \gamma_k'=\gamma_k\sqcup \{c,d\}$. Since the box $c$ was not contained in~$\lambda/\mu\supset \gamma_k$, neither was the~$q-2$-box left of~$c$. Hence~$\alpha$ does not contain that~$q-2$-box. The $q-2$-box below $c$ belongs to $\gamma_{k-1}'$, so also not to $\alpha$.  In conclusion, $\alpha$ is different for cases 1(i) and~(3). A fixed element of~$\cS_2(\lambda)$ can thus only be identified in at most one way with an element of~$\cS_1(\lambda)$.
\end{proof}

\subsection{A filtration of~$\Xi$}\label{SecFilt}
Recall the set~$\PA^{\ge k}$ and $\PA^k$ from~\eqref{SetsPAk}
\begin{prop}
The representation~$\Xi$ of~$\TL_\infty(0)$ on~$\PA_\mZ$ restricts to subrepresentations $\Xi^{\ge k}$ on~$\PA^{\ge k}_\mZ$ for each $k\in\mN$.
\end{prop}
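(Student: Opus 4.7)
The plan is to prove by induction on $|\lambda|$ that $T_q(\lambda)\in\PA^{\ge k}_\mZ$ for every $\lambda\in\PA^{\ge k}$ and every $q\in\mZ$, using the explicit description of $T_q(\lambda)$ provided by Proposition~\ref{PropUnique}.

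For the base case $|\lambda|=|\partial^k|$ we must have $\lambda=\partial^k$, and I would verify directly that $\partial^k$ is never of type (d) or (e) for any $q$. Each rim box $(i,j)$ of $\partial^k$ has anticontent $i+j\in\{k,k+1\}$: those with $i+j=k+1$ have neither east nor south neighbour in $\partial^k$ and are therefore removable (type (b)), while those with $i+j=k$ have both such neighbours in $\partial^k$, in which case $(i+1,j+1)$ is an addable $q$-box (type (a)). Hence $T_q(\partial^k)\in\{\partial^k\boxplus q,\,0\}\subseteq\PA^{\ge k}_\mZ$.

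For the inductive step with $\lambda\supsetneq\partial^k$, cases (a), (b), (c) of Proposition~\ref{PropUnique} are immediate. The key geometric input for types (d), (e) is that the defining condition $(i_0+1,j_0)\notin\lambda$ (respectively $(i_0,j_0+1)\notin\lambda$) for the rim $q$-box $(i_0,j_0)$, combined with $\lambda\supseteq\partial^k$, forces $(i_0+1,j_0)\notin\partial^k$ and hence $i_0+j_0\ge k+1$. Every box $(i_0,j_0+s)$ with $s\ge 1$ then has anticontent at least $k+2$ and lies outside $\partial^k$. This settles type (d,$[r,0]$) at once: the removed box $(i_0,j_0+1)$ has anticontent $\ge k+2$, so $T_q(\lambda)=\lambda\boxminus(q+1)\supseteq\partial^k$.

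For type (d,$[r,t]$) with $t\ge 1$, I would invoke the recursive identity from Lemmas~\ref{Lem3} and~\ref{Lem4} to write $T_q(\lambda)=T_{q+t+1}(T_q(\mu))$ with $\mu=\lambda\boxminus(q+t+1)$. Since the removed box $(i_0,j_0+t+1)$ has anticontent $\ge k+t+2$, we have $\mu\supseteq\partial^k$, and $|\mu|<|\lambda|$ lets the inductive hypothesis supply $T_q(\mu)\in\PA^{\ge k}_\mZ$. Writing $T_q(\mu)=\nu$ (a single partition or $0$ by Proposition~\ref{PropUnique}), the zero case is trivial; otherwise $\nu\in\PA^{\ge k}$, and because $\mu$ is itself of type (d,$[r-1,t-1]$) the partition $\nu$ is obtained from $\mu$ by removing a nonempty rim hook, so $|\nu|<|\mu|<|\lambda|$. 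A second use of the inductive hypothesis, this time for $\nu$ and $T_{q+t+1}$, then yields $T_{q+t+1}(\nu)=T_q(\lambda)\in\PA^{\ge k}_\mZ$. Type (e) is entirely symmetric, with rim hooks ending at $q-1$ replacing rim hooks starting at $q+1$. The main obstacle is precisely this second step: one must check that $\mu$ is genuinely of type (d) so that $T_q$ strictly decreases its size, which holds because removing a single box of content $q+t+1\ne q$ cannot produce an addable $q$-box for $\mu$.
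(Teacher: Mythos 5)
Your proof is correct, but it follows a different route from the paper's. The paper argues directly on the removed rim hook: by Proposition~\ref{PropUnique}(IV)--(V) the hook is balanced and minimal, which forces its minimal anticontent to be attained at the starting $q+1$-box (resp.\ ending $q-1$-box); since the type~(d)/(e) hypothesis places that box at anticontent at least $k+2$, no box of the hook can lie in $\partial^k$, and the containment $\partial^k\subseteq\kappa$ follows in one step. You instead run an induction on $|\lambda|$ that re-uses the recursive identities $T_q(\lambda)=T_{q+t+1}T_q(\lambda\boxminus(q+t+1))$ underlying Lemmata~\ref{Lem3} and~\ref{Lem4}, peeling off one box of large anticontent at a time and invoking the inductive hypothesis twice (once for $\mu$, once for $\nu=T_q(\mu)$, which is legitimate since $\nu$ is strictly smaller because $\mu$ is again of type~(d)). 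What your approach buys is that you never need the slightly delicate observation about where a minimal balanced rim hook attains its minimal anticontent; what it costs is the extra bookkeeping of the base case $\lambda=\partial^k$ and the verification that each intermediate partition stays in $\PA^{\ge k}$ and strictly decreases in size. Both arguments rest on the same geometric fact, which you isolate correctly: in types~(d) and~(e) the rim $q$-box has anticontent at least $k+1$, so everything removed lives strictly outside $\partial^k$.
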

We denote the composition factors of the above filtration by
\begin{equation}\label{eqsections}\Xi^k\,:\;\TL_\infty(0)\,\to\,\End_\mZ(\PA^k_\mZ).
\end{equation}
The proposition follows immediately from the following lemma.
\begin{lemma}\label{Preserve}
If $\lambda\in\PA^{\ge k}$, for some $k\in\mN$, and $\aac^q_{\lambda\kappa}\not=0$ for some $q\in\mZ$, then~$\kappa\in\PA^{\ge k}$.
\end{lemma}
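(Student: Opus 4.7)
My plan is to reduce the claim to the explicit computation of $T_q(\lambda)$ provided by Proposition~\ref{PropUnique}: since Proposition~\ref{PropExis} gives $T_q(\lambda)=\sum_\kappa\aac^q_{\lambda\kappa}\kappa$, it suffices to verify, for each of the mutually exclusive type-classification cases (a)--(e) for $\lambda$ at $q$, that every nonzero summand of $T_q(\lambda)$ lies in $\PA^{\ge k}$. Cases (a), (b), (c) are instant: $T_q(\lambda)$ is either $\lambda\boxplus q\supseteq\lambda\supseteq\partial^k$ or $0$. Case (e) will reduce to case (d) under transposition of partitions, since transposition swaps types (d) and (e), negates all contents (so a balanced hook ending at $q-1$ becomes one starting at $1-q$), and fixes $\partial^k$ (self-transpose); I therefore just handle case (d).

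In case (d) I record the rim $q$-box of $\lambda$ as $(r_0,q+r_0)$ and set $c_0:=q+r_0+1$, so that the adjacent $(q+1)$-box to its right is $(r_0,c_0)$. The type (d) data give $\lambda_{r_0}\ge c_0$ and $\lambda_{r_0+1}\le c_0-2$, and validity of the column index forces $r_0\ge 1-q$. The key inequality I need is
\[
2r_0+q+1\;\ge\;k+2.
\]
This I plan to establish by splitting into the subcase $r_0+1\le k$ (where $\partial^k$-containment gives $\lambda_{r_0+1}\ge k-r_0$, so that $2r_0\ge k-q+1$) and the subcase $r_0\ge k$ (where the combination $r_0\ge k$ and $r_0\ge 1-q$ yields the inequality after a further short split into $q\ge 1-k$ versus $q\le -k$).

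A standard rim-hook analysis, via the no-$2\times 2$ and connectedness conditions, gives $\mu_{r_0-i}=\lambda_{r_0-i+1}-1$ for $i\ge 1$ and $\mu_{r_0}=c_0-1$. From this, a balanced width-$w$ rim hook with SW corner $(r_0,c_0)$ exists precisely when $\lambda_{r_0-w+1}=c_0+w-1$, and in that case the leftmost column of the hook in row $r_0-i$ is $c_0$ for $i=0$ and $\lambda_{r_0-i+1}$ for $1\le i\le w-1$. Minimality of $w$ combined with partition monotonicity $\lambda_{r_0-w'+1}\ge\lambda_{r_0-w+1}=c_0+w-1>c_0+w'-1$ for $w'<w$ then forces $\lambda_{r_0-w'+1}\ge c_0+w'$ for each $1\le w'\le w-1$.

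Putting these inputs together, every box $(r_0-i,c)\in R$ in the minimal balanced rim hook $R$ satisfies
\[
r+c\;\ge\;(r_0-i)+(c_0+i)\;=\;2r_0+q+1\;\ge\;k+2,
\]
so $R\cap\partial^k=\varnothing$ and hence $T_q(\lambda)=\lambda\setminus R\supseteq\partial^k$, as required. I expect the main obstacle to be the case analysis establishing $2r_0+q+1\ge k+2$, in particular handling the boundary regime $r_0\ge k$ cleanly; once this is in hand, the rim-hook minimality step is short bookkeeping about the leftmost columns.
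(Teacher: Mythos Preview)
Your proof is correct and follows essentially the same strategy as the paper: reduce via Propositions~\ref{PropExis} and~\ref{PropUnique} to showing that in cases (d) and (e) the removed minimal balanced rim hook lies entirely outside $\partial^k$, which amounts to showing every box of the hook has anticontent at least $k+2$. The paper argues this in two shorter strokes: first, minimality of the balanced hook forces the minimal anticontent to be attained at the $(q{+}1)$-box (otherwise the prefix of the hook up to the last box of anticontent $r_0+c_0$ is a strictly smaller balanced rim hook starting at $q{+}1$); second, the type~(d) condition that there is no box below the rim $q$-box forces that $q$-box, if it lies in $\partial^k\subseteq\lambda$, to be a removable box of $\partial^k$, hence of anticontent exactly $k{+}1$. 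Your case split on $r_0{+}1\le k$ versus $r_0\ge k$ and your row-by-row leftmost-column computation reach the same inequalities, just more explicitly; note that your monotonicity bound $\lambda_{r_0-w'+1}\ge\lambda_{r_0-w+1}=c_0+w-1\ge c_0+w'$ for $w'\le w{-}1$ already suffices without invoking minimality at that step. Your transposition reduction of case~(e) to case~(d) makes explicit what the paper leaves as ``completely symmetrical''.
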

\begin{proof}
By Propositions~\ref{PropExis} and~\ref{PropUnique}, $\kappa$ is obtained from~$\lambda$ either by adding a $q$-box (in which case~$\partial^k\subseteq \lambda\subset \kappa$) or by removing a rim hook as described in \ref{PropUnique}(IV) or (V). We restrict to the case~(IV) for simplicity. The rim hooks which are removed are balanced and minimal with that property. This means that the minimal anticontent of a box in the hook is attained by the~$q+1$-box, since otherwise one could construct a smaller balanced rim hook which ends at the box before the first one with strictly smaller anticontent.  

Assume first that the rim $q$-box of~$\lambda$ is inside $\partial^k$. It is then necessarily a removable box in~$\partial^k$ (in other words a box with maximal anticontent in~$\partial^k$). As the rim $q+1$-box has anticontent one higher than the~$q$-box, the above observation on the anticontent shows that no boxes in the rim hook are contained in~$\partial^k$. If the rim $q$-box already is not contained in~$\partial^k$ then the~$q$-box consequently has higher content than the ones in~$\partial^k$ and the same reasoning thus allows to conclude that no boxes in the rim hook are contained in~$\partial^k$. 
\end{proof}

\subsection{Faithfulness of~$\Xi$}\label{SecFaithful}
\begin{prop}\label{PropFaith}
The representations $\Xi$ and $\Xi'$ are faithful.
\end{prop}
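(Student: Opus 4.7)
The plan is to first reduce faithfulness of $\Xi$ to that of $\Xi'$ via conjugation by the matrix $\cc$. Let $C : \PA_{\mZ} \to \PA_{\mZ}$ denote the $\mZ$-module endomorphism $\lambda \mapsto \sum_{\mu} \cc_{\lambda\mu}\mu$. Since $\cc$ is unitriangular with respect to the inclusion order on $\PA$ (Section~\ref{SecCell}), $C$ is a $\mZ$-module automorphism, with inverse determined by $\cc^{\minus 1}$; comparing the defining formulae in Proposition~\ref{PropExis} and Lemma~\ref{LemEasyRep} one obtains $\Xi(T_q) = C^{\minus 1} \circ \Xi'(T_q) \circ C$ for every generator $T_q$. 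Hence $\Xi$ and $\Xi'$ are conjugate representations, $\ker\Xi = \ker\Xi'$, and it suffices to show that $\Xi'$ is faithful.

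For $\Xi'$, the plan is a leading-term/triangularity argument. Let $x \in \TL_\infty(0)$ be nonzero; since $x$ is supported on finitely many generators, fix a finite interval $I \subset \mZ$ with $x \in \TL_I(0)$ and expand $x$ in a chosen basis (for instance, the Catalan basis of planar diagrams, or a reduced-word normal form). Evaluate $\Xi'(x)$ on a ``generic'' starting partition, e.g.\ the staircase $\mu = \partial^k$ with $k$ much larger than $|I|$: for such $\mu$, every generator $T_q$ with $q \in I$ produces both a box-addition term and a box-removal term, of two different sizes. Filtering $\PA_{\mZ}$ by $|\lambda|$, the top-degree component of $\Xi'(x)(\mu)$ collects only the ``add a $q$-box'' summand of each $T_q$ applied at each step, and thus coincides with the naive ``add-only'' action on $\mu$. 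On this top stratum, distinct normal-form words produce distinct sequences of box-additions---equivalently, distinct standard tableau shapes stacked above $\mu$---which are linearly independent, forcing $\Xi'(x)(\mu) \neq 0$.

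The main obstacle will be making this triangularity rigorous: fixing a normal form for $\TL_I(0)$ and verifying, using the Temperley-Lieb relations together with the combinatorics of addable $q$-boxes on $\partial^k$, that distinct normal-form words really do produce distinct top-degree sequences of box-additions, and that subdominant (box-removing) contributions cannot conspire to cancel the leading term. Should this prove cumbersome, an alternative is to work through the filtration $\{\Xi^{\geq k}\}$ of Section~\ref{SecFilt} and prove faithfulness at each section $\Xi^k$---where the action on $\PA^k_{\mZ}$ admits a cleaner combinatorial description once the $2$-core is fixed---before assembling faithfulness of $\Xi$ from the pieces via an inductive-limit argument on $|I|$.
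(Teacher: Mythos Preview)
Your reduction of faithfulness of $\Xi$ to that of $\Xi'$ via conjugation by $C$ is correct and is exactly how the paper begins. The overall strategy---filter $\PA_\mZ$ by size and isolate a leading term of a normal-form word---is also the paper's. But there is a genuine gap in your execution.

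The staircase $\partial^k$ does not work as a universal test partition. Its addable boxes have contents $k,k-2,\ldots,-k$ and its removable boxes have contents $k-1,k-3,\ldots,-k+1$; both sets lie in a single residue class modulo~$2$. Hence for $q$ of the wrong parity one has $\Xi'(T_q)(\partial^k)=0$ outright, so your claim that ``every generator $T_q$ with $q\in I$ produces both a box-addition term and a box-removal term'' is false. Concretely, take $x=T_1T_2-T_2T_1$, which is nonzero since $[1,2]$ and $[2,2]\cdot[1,1]$ are distinct elements of $\fcs$. For every $k$ one checks that the top-degree (add-only) part of both $\Xi'(T_1T_2)(\partial^k)$ and $\Xi'(T_2T_1)(\partial^k)$ vanishes: whichever of $T_1,T_2$ has the right parity to add a box to $\partial^k$, the resulting partition never has the addable box required for the next step. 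So your leading-term argument on $\partial^k$ yields no information about $x$. There is also a second issue you flag but do not resolve: the add-only result depends only on the \emph{multiset} of contents (the shape $\nu/\mu$), not on the order, so ``distinct tableau shapes'' is not what lands in $\PA_\mZ$; you would still need to show that two distinct normal forms cannot both give a nonzero add-only result on the same $\mu$.

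The paper fixes both problems by working at the \emph{bottom} of the size filtration (box removals only) and by tailoring $\lambda$ to the chosen word rather than using one $\mu$ for all of $\TL_I(0)$. For $\underline{w}=[a_1,b_1]\cdots[a_r,b_r]\in\fcs$, the remove-only action of $T_{[a,b]}$ strips $b-a+1$ boxes from a single row, and the decreasing conditions on the $a_j,b_j$ force successive intervals to act on strictly higher rows; thus both the row and the number of boxes removed in each row recover $\underline{w}$ uniquely (Lemma~\ref{NewLem}). A witnessing $\lambda$ is then constructed row by row from the data $(a_j,b_j)$. Your fallback via the sections $\Xi^k$ is also unlikely to succeed: as noted later in the paper, the individual $\Xi^k$ are not faithful, so faithfulness of $\Xi$ cannot be assembled from them directly.
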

Before we get to the proof we need some preparatory results.

\subsubsection{}For every sequence of integers $\underline{i}=(i_1,\ldots,i_r)$, we define the element $T_{\underline{i}}=T_{i_1} \cdots T_{i_r}$ in~$\TL_\infty(0)$. We also denote by~$\ell(\underline{i})=r$ the length of~$\underline{i}$. We multiply sequences of integers by concatenation and, for~$a\le b$, we write $[a,b]$ for the sequence~$(a,a+1,\ldots,b)$. Sequences
\begin{equation}\label{eqwint}
\underline{w} = [a_1,b_1] \cdot [a_2,b_2]\cdot \ldots\cdot [a_r,b_r],
\end{equation}
for some $r \ge 0$ such that~$a_1 > a_2 >  \ldots >a_r$ and $b_1> b_2 > \cdots  > b_r$, will be called \emph{fully commutative sequences}. We denote the set of such sequences by~$\mathsf{fcs}$. The segments $[a_j,b_j]$ will be called the \emph{intervals} of~$\underline{w}$.

\subsubsection{} The algebra~$\TL_\infty(0)$ admits {\em a basis} of the form $\{T_{\underline{w}}\;|\; \underline{w} \in\fcs \}.$
By~\cite[Proposition 1]{Fan}, a basis of~$\TL_\infty(0)$ is given by products of generators corresponding to fully commutative elements in~$\mathbb{S}_\infty$, see \cite[Section 1]{Fan} for a definition of fully commutative elements. Furthermore, by~\cite[Corollary 5.8]{Stem}, fully commutative elements have a normal form given by the elements in~$\mathsf{fcs}$. Such an expression for a fully commutative element in~$\mathbb{S}_\infty$ is unique, see \cite[Section 1.3]{Stem}. 

\subsubsection{} Consider the Fock space representation~$\Xi'$ of~$\TL_{\infty}(0)$. For $\underline{w}\in\fcs$ and $\lambda \in\PA$, we denote by~$\langle T_{\underline{w}}(\lambda)\rangle_m$, the part of the summation in~$\Xi'(T_{\underline{w}})(\lambda)$ of partitions of size $|\lambda|-\ell(\underline{w})$.

\begin{lemma}\label{NewLem} Consider arbitrary $\underline{w},\underline{v}$ in~$\fcs$.
\begin{enumerate}[(i)]
\item For an arbitrary $\lambda\in \PA$, we have either $\langle T_{\underline{w}}(\lambda)\rangle_m=0$ or $\langle T_{\underline{w}}(\lambda)\rangle_m$ is a partition.
\item There exists $\lambda\in \PA$, such that~$\langle T_{\underline{w}}(\lambda)\rangle_m\not=0$.
\item If $\ell(\underline{w})=\ell(\underline{v})$ and $\langle T_{\underline{w}}(\lambda)\rangle_m=\langle T_{\underline{v}}(\lambda)\rangle_m\not=0$ for some $\lambda\in\PA$, then $\underline{w}=\underline{v}$.
\end{enumerate}
\end{lemma}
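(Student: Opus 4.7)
For part (i), I directly analyse the action of a single generator in $\Xi'$: each $T_i$ sends $\mu$ to $(\mu\boxplus i)+(\mu\boxminus(i-1))$, so its "minus" summand is at most one partition (the unique removable $(i-1)$-box of $\mu$, if it exists). Iterating along $T_{\underline{w}}=T_{i_1}\cdots T_{i_{\ell(\underline{w})}}$, the only contribution of size $|\lambda|-\ell(\underline{w})$ arises from taking the minus branch at every step, and each such branch produces at most one partition. Hence $\langle T_{\underline{w}}(\lambda)\rangle_m$ is either zero or a single partition.

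For (ii), the key structural observation is that the minus branch of $T_{[a_j,b_j]}=T_{a_j}T_{a_j+1}\cdots T_{b_j}$ (with its rightmost factor acting first) removes, in this order, boxes of contents $b_j-1,b_j-2,\ldots,a_j-1$; this amounts precisely to stripping a rim hook of length $b_j-a_j+1$ with content range $[a_j-1,b_j-1]$, whenever those boxes are successively removable. I will then exhibit $\lambda_{\underline{w}}$ by stacking rim hooks: after prepending enough padding rows to accommodate negative contents, place $h_j$ (the rim hook for $[a_j,b_j]$) in row $j$, occupying columns $a_j+j-1$ through $b_j+j-1$. The strict inequalities $a_1>\cdots>a_r$ and $b_1>\cdots>b_r$ guarantee both that $\lambda_{\underline{w}}$ is a genuine partition (rows weakly decreasing) and that processing the intervals right-to-left successively strips each $h_j$ as a valid rim hook at its turn; verification reduces to checking that after the lower rows have been processed, the last box of row $j$ still has no box below, which is precisely $a_{j+1}<a_j$.

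For (iii), I argue by induction on $\ell(\underline{w})=\ell(\underline{v})=\ell$. The case $\ell=0$ is trivial. For the inductive step, let $\underline{w}=[a_1,b_1]\cdots[a_r,b_r]$ and $\underline{v}=[c_1,d_1]\cdots[c_s,d_s]$; the first box removed by $\underline{w}$'s minus branch is the unique removable $(b_r-1)$-box $B$ of $\lambda$, and similarly the first removed by $\underline{v}$ is a removable $(d_s-1)$-box $B'$. Once I establish $B=B'$ (which in particular forces $b_r=d_s$, since distinct removable boxes of a single partition have distinct contents), I truncate the rightmost factor from both sides: the shortened sequences $\underline{w}^*$ and $\underline{v}^*$ remain in $\fcs$ (the inequality $b_{r-1}>b_r-1$ is automatic from $b_{r-1}>b_r$), have length $\ell-1$, and produce the same $\nu$ on $\lambda\setminus B$. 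The inductive hypothesis then yields $\underline{w}^*=\underline{v}^*$, and hence $\underline{w}=\underline{v}$.

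The main obstacle is the identification $B=B'$ in the inductive step of (iii): the content $b_r-1$ is the smallest of the "top contents" of the rim hooks in the decomposition of $\lambda/\nu$ arising from $\underline{w}$, but this minimality is not directly readable from $\lambda/\nu$ alone. To overcome this, I will match both rim hook decompositions of $\lambda/\nu$ coming from $\underline{w}$ and $\underline{v}$, using the strict monotonicity of both $a_j$'s and $b_j$'s to force the smallest "top content" rim hook in either decomposition to be initiated by the same removable box of $\lambda$; the removability structure of $\lambda$ then rules out any discrepancy between $b_r$ and $d_s$.
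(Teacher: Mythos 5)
Parts (i) and (ii) are correct and essentially coincide with the paper's argument: the paper likewise notes that every step of the ``minus branch'' is forced because a partition has at most one removable box of a given content, and its explicit witness for (ii) is exactly your stacked-rim-hook partition with padding (namely $\lambda_l=p+b_1$ for $l\le p$ and $\lambda_{p+i}=b_i+p+i-1$), verified by the same inequality $a_{j+1}<a_j$.

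Part (iii) is where you have a genuine, self-acknowledged gap: the identification $B=B'$ (equivalently $b_r=d_s$) is only promised, not proved. The missing ingredient is the observation — valid for \emph{arbitrary} $\lambda$, not just your constructed one — that the minus branch of $T_{[a_j,b_j]}$ removes a \emph{horizontal strip contained in a single row}. Indeed, a partition cannot simultaneously have corners of contents $c$ and $c-1$ (this would force two consecutive rows of equal length, contradicting that the upper one ends in a corner), so after removing the corner of content $b_j-1$ from row $k$ the unique candidate for a removable box of content $b_j-2$ is the newly exposed box in the same row $k$; iterating, the whole strip for $[a_j,b_j]$ lies in the row $k$ with $\lambda_k-k=b_j-1$ and has $b_j-a_j+1$ boxes, and since $b_1>\cdots>b_r$ the successive intervals touch strictly higher rows. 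Consequently $\lambda/\nu$ is a disjoint union of horizontal strips, one per touched row, and each nonempty row $k$ of $\lambda/\nu$ recovers its interval via $b=\lambda_k-k+1$ and $b-a+1=\lambda_k-\nu_k$; the $\fcs$ ordering of these intervals is then forced by the monotonicity conditions. This determines $\underline{w}$ from $(\lambda,\nu)$ outright, which is the paper's (non-inductive) proof of (iii) and in particular yields your $B=B'$ by looking at the bottom-most touched row — but it also makes the induction on $\ell$ redundant. As it stands, your write-up of (iii) is a plan rather than a proof; with the single-row-strip fact made explicit, it closes immediately.
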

\begin{proof}
We will assume $\underline{w}$ is of the form \eqref{eqwint} for the entire proof. Consider first the interval $[a_r,b_r]$. Then $T_{[a_r,b_r]}$ can remove $b_r-a_r+1$ boxes in~$\lambda\in\PA$ if and only if there is an $i\in\mZ_{\ge 1}$ such that
$$\lambda_i-i=b_r-1\qquad\mbox{and}\qquad \lambda_i-\lambda_{i+1}\ge b_r-a_r+1.$$
In this case, the unique partition~$\overline{\lambda}$ of size $|\lambda|-(b_r-a_r+1)$ in the summation~$T_{[a_r,b_r]}(\lambda)$ is obtained from $\lambda$ by removing $b_r-a_r+1$ boxes in row $i$. We can use the above argument on~$T_{[a_{r-1},b_{r-1}]}(\overline{\lambda})$. Moreover, since~$b_{r-1}>b_r$, it follows that the row from which boxes are removed in this step is strictly above the previous one.

It follows that the unique partition of~$|\lambda|-\ell(\underline{w})$ which can appear in~$T_{\underline{w}}\lambda$ is obtained by removing $b_j-a_j+1$ boxes in the unique row $k$ for which $\lambda_k=b_j+j-1$. This already proves part (i). Furthermore, since the number of boxes which are removed in each row reflects the lengths of the intervals of~$\underline{w}$ and the rows in which they are removed determines the values $b_j$, we obtain part~(iii).

Now we prove part (ii). Take $p\in\mN$ such that~$p\ge 2-a_r-r$. 
We define $\lambda\in\PA$ of length $p+r$, by setting
$$\begin{cases} \lambda_{l}&=\;\;p+1+b_1-1,\qquad\mbox{for }\;1\le l\le p,\\
\lambda_{p+i}&= \;\;p+i+b_{i}-1,\qquad\mbox{for }\;1\le i\le r. 
\end{cases}$$
That this is a partition follows from $\underline{w}\in\fcs$ and the definition of~$p$.
Clearly, by acting with~$T_{[a_r,b_r]}$ we can remove $b_r-a_r+1$ boxes in row $p+r$. As such we obtain a partition~$\overline{\lambda}$ with
$$\overline{\lambda}_{p+r-1}=\lambda_{p+r-1}=p+r+b_{r-1}-2\qquad\mbox{and}\qquad \overline{\lambda}_{p+r}=p+r+a_r-2.$$
In particular
$$\overline{\lambda}_{p+r-1}-\overline{\lambda}_{p+r}=b_{r-1}-a_r\ge b_{r-1}-a_{r-1}+1.$$
Hence, $\langle T_{[a_{r-1},b_{r-1}]}(\overline{\lambda})\rangle_m$ will again be non-zero and we can proceed iteratively.
\end{proof}

\begin{proof}[Proof of Proposition~\ref{PropFaith}]
Since the representation~$\Xi$ in Proposition~\ref{PropExis} is obtained from $\Xi'$ in Lemma~\ref{LemEasyRep} by applying an automorphism we will only prove faithfulness of the latter.

Fix an arbitrary element $x$ in~$\TL_\infty(0)$, written as $\sum_{k=1}^m r_k T_{\underline{w}^k}$, with~$r_k \in R$ and the~$\underline{w}^k\in\fcs$ distinct. Assume that~$\underline{w}:=\underline{w}^1$ has maximal $\ell(\underline{w})$. By Lemma~\ref{NewLem}(ii), there exists $\lambda\in\PA$ such that the summation~$T_{\underline{w}}\lambda$ contains a partition of size $|\lambda|-\ell(\underline{w})$, say $\nu$, with coefficient $1$. If $\ell(\underline{w}^j)<\ell(\underline{w})$, then clearly $T_{\underline{w}^j}\lambda$ will not contain~$\nu$, since all appearing partitions will be of strictly bigger size. Furthermore, if $\ell(\underline{w}^j)=\ell(\underline{w})$, Lemma~\ref{NewLem}(i) and~(iii) imply that~$T_{\underline{w}^j}\lambda$ does not contain~$\nu$ either. This proves that~$x(\lambda)\not=0$.
\end{proof}

\subsection{The Temperley-Lieb algebra as an enveloping algebra}
Consider the $\mZ$-Lie algebra~$\mathfrak{sl}_\infty$ with standard Chevalley generators $\{e_i,f_i\,|\,i\in\mZ\}$. The Fock space representation~$\Phi$ of~$\mathfrak{sl}(\infty)$ on~$\PA_{\mZ}$, see e.g.~\cite[Section~2.3]{Oded}, is clearly such that
$$\Phi(e_i+f_{i-1})\;=\; \Xi'(T_i),\;\quad\mbox{for all $i\in\mZ$.}$$
Let $\mathfrak{k}$ denote the Lie subalgebra of~$\mathfrak{sl}_\infty$ generated by~$\{e_i+f_{i-1}\,|\,i\in\mZ\}$. By construction, we have
$$\Phi(U(\mathfrak{k}))\;=\; \Xi'(\TL_{\infty}(0)),$$
 as subalgebras of~$\End_{\mZ}(\PA_{\mZ}).$ By Proposition~\ref{PropFaith}, we thus have
 $$\TL_{\infty}(0)\;\cong\;U(\mathfrak{k})/K$$
 with~$K$ the kernel of~$\Phi|_{U(\mathfrak{k})}$. 

\section{Main theorems}\label{SecMain}

\subsection{Thick tensor ideals and cells in the periplectic Deligne category}
\subsubsection{Thick tensor ideals}
A thick tensor ideal in a Krull-Schmidt monoidal (super)category~$\cC$ is a full subcategory~$\ideal$ which is
\begin{itemize}
\item {\em an ideal:} $X\otimes Y\in \ideal$, whenever $X\in \ideal$ or~$Y\in \ideal$;
\item {\em thick:} if $Z\in \ideal$ satisfies~$Z\cong X\oplus Y$, then $X,Y\in\ideal$.
\end{itemize}
For $\cC$ and $\ideal$ as above, {\em the monoidal supercategory $\cC/\ideal$} is defined as the quotient category of~$\cC$ with respect to all morphisms which factor through objects in~$\ideal$. 

\begin{rem}
The first condition simplifies for braided monoidal supercategories, such as~$\DA$. The second condition implies in particular that~$\ideal$ is {\em strictly full}. Sometimes it is imposed that~$\ideal$ must also be an {\em additive} subcategory. As all thick tensor ideals in~$\DA$, using the above definition, will be generated by one indecomposable object, they are obviously additive.
\end{rem}

Let~$\ideal_k$ denote the thick tensor ideal in~$\DA$ generated by~$R(\partial^{k})$. Concretely, $\ideal_k$ is the strictly full additive subcategory which contains all direct summands of~$R(\partial^k)\otimes R(\nu)$ for all $\nu\in\PA$.

\begin{thm}\label{Classif}
The set~$\{\ideal_k\,|\,k\in\mN\}$ yields a complete set of thick tensor ideals in~$\DA$. The indecomposable objects in~$\ideal_k$ are (up to isomorphism) given by
$\{R(\lambda)\;|\; \partial^{k}\subseteq\lambda\}.$ We thus have one chain of ideals
$$\DA\;=\;\ideal_0\;\supsetneq\; \ideal_1\;\supsetneq\;\cdots\;\supsetneq\; \ideal_k\;\supsetneq\;\ideal_{k+1}\;\supsetneq\;\cdots.$$
\end{thm}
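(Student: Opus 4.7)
My plan is to first describe the indecomposables of each $\ideal_k$ and then use a combinatorial reduction to show these are the only thick tensor ideals. First I would identify the indecomposable objects of $\ideal_k$ as exactly $\{R(\lambda) : \lambda \in \PA^{\ge k}\}$. For any $\lambda \in \PA^{\ge k}$, one can reach $\lambda$ from $\partial^k$ by adding boxes one at a time, and Lemma~\ref{Lemaa1} at each step produces the next $R(\cdot)$ as a direct summand of $\TT_q$ applied to the previous one. Conversely, since $R(\nu)$ is always a direct summand of $R(\Box)^{\otimes |\nu|}$, summands of $R(\partial^k) \otimes R(\nu)$ already occur in $\TT^{|\nu|}R(\partial^k)$, and iterating Lemma~\ref{Preserve} through the decomposition $\TT = \bigoplus_q \TT_q$ forces every such summand to lie in $\PA^{\ge k}$. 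The strict chain $\ideal_k \supsetneq \ideal_{k+1}$ and the identity $\ideal_0 = \DA$ are then immediate from $\partial^k \in \PA^{\ge k} \setminus \PA^{\ge k+1}$ and $\partial^0 = \varnothing$.

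For completeness, fix a nonzero thick tensor ideal $\ideal$, write $X := \{\lambda \in \PA : R(\lambda) \in \ideal\}$, and set $k(\mu) := \max\{k : \partial^k \subseteq \mu\}$. Since $\TT_q R(\lambda) \cong R(T_q(\lambda))$ whenever $T_q(\lambda) \neq 0$ (by Theorem~\ref{ThmDecat} combined with the fact that $\Xi(T_q)\lambda$ is $0$ or a single partition by Proposition~\ref{PropUnique}), the set $X$ is closed under the $T_q$'s. The key claim is that every $\mu \in X$ can be reduced to $\partial^{k(\mu)}$ by a finite sequence of $T_q$'s, hence $\partial^{k(\mu)} \in X$. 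Granting this, put $k_0 := \min\{k : \partial^k \in X\}$; then $\ideal \supseteq \ideal_{k_0}$ because $R(\partial^{k_0}) \in \ideal$, while any $R(\mu) \in \ideal$ forces $k(\mu) \ge k_0$ by the claim, so $\mu \in \PA^{\ge k_0}$ and $R(\mu) \in \ideal_{k_0}$; this yields $\ideal = \ideal_{k_0}$.

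The main obstacle is the reduction claim, which I would prove by induction on $|\mu| - |\partial^{k(\mu)}|$. Given $\mu \supsetneq \partial^{k(\mu)}$, pick $(a,b) \in \mu \setminus \partial^{k(\mu)}$ maximising the anticontent $a+b$; maximality together with the partition shape of $\partial^{k(\mu)}$ forces $(a,b)$ to be a removable box of $\mu$. If $\mu_{a+1} \le b-2$ then $T_{b-a-1}$ (type (d) in Proposition~\ref{PropUnique}) removes $\{(a,b)\}$ as a $1 \times 1$ balanced rim hook, and if $a > 1$ and $\mu_{a-1} = b$ then $T_{b-a+1}$ (type (e)) does the same. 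When both conditions fail, the inequality $a+b > k(\mu)+1$ (forced by $(a,b) \notin \partial^{k(\mu)}$) implies that the anti-diagonal $\{(i,j) : i+j = a+b\}$ meets $\mu$ in a contiguous row-interval $[i_0, i_1]$ with $\mu_i = a+b-i$ throughout, and by the choice of $i_1$ one has $\mu_{i_1+1} \le a+b-i_1-2$, so type (d) at $(i_1, a+b-i_1-1)$ succeeds and removes $(i_1, a+b-i_1) \in \mu \setminus \partial^{k(\mu)}$. In every case the resulting $\mu'$ still contains $\partial^{k(\mu)}$ and satisfies $k(\mu') = k(\mu)$ (since removing a box can never create a new inclusion $\partial^{k(\mu)+1} \subseteq \mu'$), so induction closes. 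The delicate point is verifying that after passing from $(a,b)$ to $(i_1, a+b-i_1)$ a type (d) removal always becomes available; the remainder is routine bookkeeping.
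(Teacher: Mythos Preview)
Your first paragraph (identification of the indecomposables of $\ideal_k$) is correct and matches the paper's argument. The overall structure of your completeness argument is also sound: reducing an arbitrary $\mu$ to $\partial^{k(\mu)}$ inside any ideal would indeed finish the proof.

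However, your inductive step has a genuine gap. Your claim that the anti-diagonal of maximal anticontent meets $\mu$ in a \emph{contiguous} interval is false in general: for $\mu=(5,4,2,2,1)$ one has $k(\mu)=4$, the maximal anticontent is $c=6$, and the rows meeting this anti-diagonal are $\{1,2,4,5\}$. More importantly, even when you pass to $i_1=\max S$, your assertion that $\mu_{i_1+1}\le c-i_1-2$ and hence that type~(d) applies at $(i_1,c-i_1-1)$ fails precisely when $c-i_1=1$, i.e.\ when the bottom box of the anti-diagonal sits in the first column. Concretely, take $\mu=(3,3,2,1)$: here $k(\mu)=3$, $c=5$, and choosing $(a,b)=(3,2)$ both of your direct conditions fail; but $i_1=4$, $\mu_5=0$, $c-i_1-2=-1$, and the box $(4,0)$ does not exist, so no type~(d) move is available at $i_1$. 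The same obstruction occurs for $\mu=(5,4,2,2,1)$ with $(a,b)=(1,5)$. In these examples the reduction is still achievable (e.g.\ remove $(2,3)$ via type~(e) in the first case), but not by the rule you wrote down; you would need a more careful case analysis handling the first-column situation, for instance by symmetrically trying type~(e) at $i_0=\min S$ and treating the doubly degenerate case $i_0=1$, $c-i_1=1$ separately.

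The paper sidesteps this combinatorics entirely. Instead of tracking anticontents, it invokes Lemma~\ref{RemoveBox}: any partition $\nu$ that is not a $2$-core has a removable rim $2$-hook, and the lemma guarantees that removing one box of that $2$-hook already produces a summand $R(\kappa)$ of $\TT(R(\nu))$ with $\kappa\subsetneq\nu$. One then takes $\nu\in\ideal$ of minimal size with $\partial^{k+1}\not\subset\nu$ and concludes $\nu=\partial^k$ immediately. This is both shorter and avoids the edge cases your approach runs into.
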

\begin{proof}
Proposition~\ref{PropExis} implies that~$\TT(R(\nu))=R(\nu)\otimes R(\Box)=\oplus_\kappa R(\kappa)^{\oplus \aac_{\nu\kappa}}$ contains any $R(\kappa)$, with~$\kappa$ obtained by adding a box to $\nu$.
Consequently, $\ideal_k$ contains $R(\lambda)$ for all partitions~$\lambda$ which contain~$\partial^k$. On the other hand, Lemma~\ref{Preserve} implies that~$R(\lambda)\in\ideal_k$ requires $\partial^k\subseteq \lambda$.

It thus suffices to show that there are no more thick tensor ideals. Let~$\ideal$ be such an ideal and~$\partial^k$ the largest 2-core which is contained in all $\lambda$ with~$R(\lambda)\in\ideal$. 
Let~$\nu$ be a partition with~$R(\nu)\in\ideal$, with~$\partial^{k+1}\not\subset \nu$ and which has minimal $|\nu|$ under those two restrictions. 
Assume first that~$\nu\not=\partial^k$. Then~$\nu$ must contain a removable rim 2-hook and by 
Lemma~\ref{RemoveBox} there exists~$\kappa\subsetneq \nu$ such that~$R(\kappa)$ is a direct summand of~$\TT(R(\nu))$. This violates the minimality of~$|\nu|$, so $\nu=\partial^k$. By the above paragraph we then find $\ideal=\ideal_k$. 
\end{proof}

\subsubsection{Two-sided cells}\label{cells} Following~\cite[Section~3]{MM}, we have the notions of left, right and two-sided cells on a monoidal supercategory. As we work with symmetric categories, these three notions coincide. The quasi-order $\preceq$ on the set of isomorphism classes of indecomposable objects in a Krull-Schmidt symmetric monoidal (super)category~$\cC$ is determined by
$$[X]\;\preceq\; [Y],\qquad\mbox{if there exists $Z\in \Ob \cC$ such that~$Y$ is a direct summand of~$X\otimes Z$}.$$
There is a corresponding equivalence relation, defined as $[X]\sim [Y]$ if we have both $[X]\preceq [Y]$ and $[Y]\preceq [X]$. We denote the equivalence class of~$[X]$ under $\sim$ by~$[[X]]$.

For each $c=[[X]]$, we consider the additive strictly full subcategory $\cC_c$ generated by the indecomposable objects $Y\in\Ob\cC$ with~$[Y]\ge [X]$. This is the thick tensor ideal in~$\cC$ generated by~$X$. Furthermore, we have the additive strictly full subcategory $\overline{\cC}_c$ of~$\cC_c$ corresponding to the indecomposable objects $Y\in\Ob\cC$ with~$[Y]\not\sim [X]$.
The {\em cells} of~$\cC$ are the quotient categories~$\cC_c/\overline{\cC}_{c}$. We call a cell {\em maximal} if it corresponds to indecomposable objects which are maximal in the quasi-order. A maximal cell hence corresponds to a subcategory of~$\cC$.

Recall the subsets of~$\PA$ in \eqref{SetsPAk}. Clearly, for~$\DA$, the quasi-order $\preceq$ is total:
$$[R(\lambda)]\preceq [R(\mu)]\quad\mbox{if and only if $k\le l$, with~$\lambda\in\PA^k$ and $\mu\in\PA^l$.}$$
\begin{cor}
The set~$\{\ideal_k/\ideal_{k+1}\,|\,k\in\mN\}$ yields a complete set of cells in~$\DA$.
\end{cor}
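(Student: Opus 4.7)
The plan is to deduce the corollary directly from the total quasi-order displayed immediately above and the classification of thick tensor ideals in Theorem~\ref{Classif}. Since this is essentially a translation of the combinatorial information we have into the general cell formalism of~\ref{cells}, there is no substantial new obstacle; the main care needed is to correctly identify the two additive subcategories whose quotient gives each cell.

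First, I would observe that the displayed equivalence $[R(\lambda)]\preceq [R(\mu)]\iff k\le l$ (with $\lambda\in\PA^k,\mu\in\PA^l$) shows that the induced equivalence relation $\sim$ on isomorphism classes of indecomposables has as its classes exactly the sets
\[
c_k:=\{[R(\lambda)]\mid \lambda\in\PA^k\},\qquad k\in\mN.
\]
In particular there is one cell per nonnegative integer, parametrised by $2$-cores.

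Next, fix $k\in\mN$ and any $\lambda_0\in\PA^k$, and set $c=c_k=[[R(\lambda_0)]]$. By definition of $\cC_c$ in~\ref{cells}, this is the thick tensor ideal generated by $R(\lambda_0)$. Since any $R(\mu)$ with $\mu\in\PA^k$ satisfies $[R(\mu)]\sim [R(\lambda_0)]$, the ideals generated by different choices of $\lambda_0\in\PA^k$ coincide; and by the totality of $\preceq$ this ideal contains precisely those indecomposables $R(\mu)$ with $\mu\in\PA^{\ge k}$. Theorem~\ref{Classif} therefore identifies
\[
\cC_c\;=\;\ideal_k.
\]
Similarly, $\overline{\cC}_c$ is the additive strictly full subcategory generated by indecomposables $R(\mu)\in\ideal_k$ with $[R(\mu)]\not\sim [R(\lambda_0)]$; by the totality of $\preceq$ this means $[R(\mu)]\succ [R(\lambda_0)]$, equivalently $\mu\in\PA^{\ge k+1}$. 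Applying Theorem~\ref{Classif} again,
\[
\overline{\cC}_c\;=\;\ideal_{k+1}.
\]

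Finally, combining these two identifications, the cell associated to $c=c_k$ is the quotient category
\[
\cC_c/\overline{\cC}_c\;=\;\ideal_k/\ideal_{k+1},
\]
and as $k$ ranges over $\mN$ we obtain a complete list of cells, as claimed. No serious obstacle arises: all the combinatorial work has already been done in proving Theorem~\ref{Classif} and the preceding description of~$\preceq$, and the corollary amounts to recording the observation that the filtration from Theorem~\ref{Classif} is precisely the cell filtration in the sense of~\ref{cells}.
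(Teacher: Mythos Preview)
Your argument is correct and is exactly the reasoning the paper leaves implicit: the corollary is stated without proof, following immediately from the total quasi-order displayed just before it together with Theorem~\ref{Classif}. You have simply spelled out the identification $\cC_{c_k}=\ideal_k$ and $\overline{\cC}_{c_k}=\ideal_{k+1}$ that the paper takes as evident.
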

We clearly have $[\ideal_k/\ideal_{k+1}]_{\oplus}\;\cong\; \PA^k_{\mZ}$.



\subsection{The $\Ob$-kernel of the universal tensor functor}

By \cite[Section~5]{Kujawa} (see also~\cite[Section~4.5]{Vera}), for any $n\in\mZ_{\ge 1}$, we have a monoidal superfunctor
\begin{equation}\label{FunF}F_n:\;\DA\,\to\, \Fn,\end{equation}
where $i\in\mN\subset\Ob\DA$ gets mapped to~$V^{\otimes i}$ and $\cup\in\Hom_{\cA}(0,2)$ is mapped to
$$F_n(\cup)\in\Hom_{\Fn}(\mk,V^{\otimes 2})=\Hom_{\pe(n)}(V^{\otimes 2},\mk),\quad\mbox{given by }\quad\Fn(\cup)(v\otimes w)=\beta(v,w),$$
with~$\beta$ the defining bilinear form in Section~\ref{Secpn}.
In particular, $F_n$ induces the algebra morphisms
$$\phi_n^r\,: \; A_r\;\to\;\End_{\pe(n)}(V^{\otimes r})^{\op},$$
first introduced in~\cite[Proposition~2.4]{Moon}. 

\begin{thm}\label{ThmFunctor}
The monoidal superfunctor \eqref{FunF}
is full and its $\Ob$-kernel is given by~$\ideal_{n+1}$.
\end{thm}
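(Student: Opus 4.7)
The theorem splits into two independent claims: fullness of $F_n$, and identification of its Ob-kernel as $\ideal_{n+1}$.

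For fullness, I would cite \cite{DLZ}, where the algebra morphisms $\phi_n^r: A_r \to \End_{\pe(n)}(V^{\otimes r})^{\op}$ are shown to be surjective for every $r \in \mN$. Since $\DA$ is the Karoubi envelope of the additive envelope of $\cA$, and $F_n$ is monoidal and additive, this surjectivity propagates to fullness on $\DA$: any morphism $F_n((r,e)) \to F_n((s,f))$ in $\Fn$ sits inside $\Hom_{\pe(n)}(V^{\otimes s},V^{\otimes r})$, which lies in the image of the appropriate $\phi_n^{\bullet}$; lifting and then pre/post-composing with the idempotents $e$ and $f$ produces the required preimage in $\Hom_{\DA}((r,e),(s,f))$.

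For the Ob-kernel, the starting observation is that it automatically forms a thick tensor ideal in $\DA$: the monoidal isomorphism $F_n(X\otimes Y)\cong F_n(X)\otimes F_n(Y)$ gives the ideal property, while additivity of $F_n$ yields thickness. By Theorem~\ref{Classif}, the Ob-kernel therefore coincides with $\ideal_k$ for some $k$. Using the characterisation that $R(\partial^j)\in \ideal_k$ if and only if $j \geq k$, identifying $k = n+1$ reduces to verifying the two boundary conditions
\[F_n(R(\partial^n))\neq 0 \quad \text{and} \quad F_n(R(\partial^{n+1}))= 0.\]

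The first condition I would establish by producing an explicit nonzero highest-weight vector in $F_n(R(\partial^n)) \subseteq V^{\otimes n(n+1)/2}$: the partition $\partial^n$ has exactly $n$ rows, matching the rank of $\pe(n)$, so its associated weight is admissible. The main obstacle is the second condition. Since $\partial^{n+1}$ has $n+1$ rows, exceeding the rank of $\pe(n)$, the expected mechanism for vanishing is an antisymmetrisation across $n+1$ tensor factors forcing the image to be zero, in direct analogy with the classical identity $\Lambda^{n+1} W = 0$ when $\dim W = n$. To make this rigorous, I would combine the cell-module description of the projective $P_{r}(\partial^{n+1})$ recalled in Section~\ref{ACat} with the observation that any simple subquotient $L_r(\mu)$ that could survive under $F_n$ must correspond to an indecomposable $\pe(n)$-summand of a tensor power of $V$, whose highest weight is supported on at most $n$ rows; a weight-theoretic analysis then shows that no composition factor of $P_r(\partial^{n+1})$ can satisfy this constraint, forcing $F_n(e_{\partial^{n+1}}) = 0$.
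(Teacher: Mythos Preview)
Your overall structure is sound: fullness via \cite{DLZ}, and the Ob-kernel is a thick tensor ideal hence some $\ideal_k$ by Theorem~\ref{Classif}. Your reduction to the two boundary conditions $F_n(R(\partial^n))\neq 0$ and $F_n(R(\partial^{n+1}))= 0$ is also valid. The gap is in your verification of the second one.

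The antisymmetrisation analogy fails outright in the super setting: for $V=\mk^{n|n}$ one has $\Lambda^kV\neq 0$ for all $k$, so having $n+1$ rows is not an obstruction. Your follow-up weight-theoretic sketch inherits this error: nonzero summands of $V^{\otimes r}$ are governed by the $(n|n)$-hook condition $\lambda_{n+1}\le n$ of \cite{Berele}, not by a row bound, and the staircase $\partial^{n+1}$ has $(\partial^{n+1})_{n+1}=1\le n$, so it is \emph{not} excluded by the hook condition. The proposed analysis of composition factors of $P_r(\partial^{n+1})$ also conflates $A_r$-composition factors with $\pe(n)$-summands; there is no mechanism by which the former controls whether $\phi_n^r(e_{\partial^{n+1}})$ vanishes.

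The paper sidesteps this by choosing a different test object. Rather than attacking $R(\partial^{n+1})$ directly, Lemma~\ref{LemKernel} shows that the square $\lambda=(n+1)^{n+1}$ lies in the kernel: here $\lambda_{n+1}=n+1>n$, so the Berele--Regev hook condition forces the Young symmetriser $f_\lambda\in\mk\mS_r$ to act as zero on $V^{\otimes r}$, and since $e_\lambda$ refines $f_\lambda$ in $A_r$ one gets $\phi_n^r(e_\lambda)=0$. Then $\partial^k\not\subseteq(n+1)^{n+1}$ for $k>n+1$ rules out those $\ideal_k$. A minor separate point: what \cite{DLZ} proves is surjectivity onto invariants $\Hom_{\pe(n)}(V^{\otimes m},\mk)$; the passage to general $\Hom_{\cA}(i,j)$, and hence to $\phi_n^r$, uses self-duality of the generator as in \cite[Section~5.3]{Kujawa}, which your sketch skips over.
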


We start with two preparatory lemmata.

\begin{lemma}\label{TrivLem}
For $\lambda\in\PA$, we have $F_n(R(\lambda))=0$ if and only if $\phi^r_n(e)=0$ for an arbitrary~$r\in\mN$ with~$|\lambda|\in\JJJ^0(r)$ and~$e\in A_r$ an idempotent corresponding to~$L_r(\lambda)$.
\end{lemma}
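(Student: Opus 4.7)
The plan is a short, essentially tautological argument: unpack what $F_n$ does to the formal image of an idempotent and then use that an idempotent vanishes iff its image does. The only non-trivial input is that the pair $(r,e)$ in the statement presents $R(\lambda)$ as an object of $\DA$ in a way compatible with a choice made earlier (in the definition of $R(\lambda)$), so that a common isomorphism class is being computed.

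First I would fix $r\in\mN$ with $|\lambda|\in\JJJ^0(r)$ and a primitive idempotent $e\in A_r$ such that $A_re\cong P_r(\lambda)$, as in the hypothesis. By Remark~\ref{RemThm} this gives $R(\lambda)\cong (r,e)$ in $\DA$. Since $F_n$ is a monoidal superfunctor, it sends the additive Karoubi completion description $(r,e)$ to the image of the idempotent endomorphism $F_n(e)$ on $F_n(r)=V^{\otimes r}$, namely
\[
F_n(R(\lambda))\;\cong\; F_n((r,e))\;=\;\im\bigl(\phi_n^r(e)\bigr)\;\subseteq\; V^{\otimes r},
\]
where we use that $F_n$ restricts on $\End_{\cA}(r)=A_r$ to $\phi_n^r$ up to the opposite convention on composition, which is harmless for computing images.

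Next I would finish by a one-line observation: $\phi_n^r(e)$ is an idempotent in $\End_{\pe(n)}(V^{\otimes r})$ (possibly zero), and an idempotent has zero image precisely when it is itself zero. Hence $F_n(R(\lambda))=0$ iff $\phi_n^r(e)=0$. Because both $R(\lambda)$ and $F_n(R(\lambda))$ are independent (up to isomorphism) of the choice of $r$ and of the primitive idempotent $e$ with $A_re\cong P_r(\lambda)$, the resulting equivalence holds for any such choice, justifying the phrase ``arbitrary $r$'' in the statement.

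The step that genuinely uses structural information from the paper is the identification $R(\lambda)\cong(r,e)$ for \emph{any} $r$ with $|\lambda|\in\JJJ^0(r)$, supplied by Remark~\ref{RemThm} (and hence ultimately by the proof of Theorem~\ref{ThmClass}); everything else is formal. I do not expect any real obstacle: the main thing to get right is simply to make explicit that $F_n$ commutes with taking Karoubi summands, so that ``evaluating $F_n$ on $R(\lambda)$'' coincides with ``taking the image of $\phi_n^r(e)$ on $V^{\otimes r}$'', after which the idempotent-vanishes-iff-image-vanishes triviality closes the argument.
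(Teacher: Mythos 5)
Your argument is exactly the paper's proof: identify $R(\lambda)\cong(r,e)$ via Remark~\ref{RemThm}, observe that $F_n((r,e))=\im\phi_n^r(e)$ by the definition of $\phi_n^r$, and conclude since an idempotent vanishes iff its image does. The proposal is correct and matches the paper's route, merely spelling out a few more of the formal steps.
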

\begin{proof}
By Remark~\ref{RemThm}, we have $R(\lambda)\cong (r,e)$ in~$\DA$. Furthermore, by definition of~$\phi^r_n$, we have
$F_n((r,e))\;=\; \im\phi^r_n(e).$ This concludes the proof.
\end{proof}

\begin{lemma}\label{LemKernel}
For any partition~$\lambda$ with~$\lambda_{n+1}>n$, we have $F_n(R(\lambda))=0$.
\end{lemma}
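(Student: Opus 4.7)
By Lemma~\ref{TrivLem}, it suffices to prove that $\phi_n^{r}(e_\lambda)=0$ where $r=|\lambda|$. Set $K=\ker(\phi_n^r)$, a two-sided ideal of $A_r$. The plan is to first exhibit a well-understood element of $\mk\mS_r\subset A_r$ that lies in $K$, and then transfer this vanishing to $e_\lambda$ through a standard ideal/composition-factor argument.

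The element I would use is a Young symmetrizer $y_\lambda\in\mk\mS_r\subset A_r$. The restriction of $\phi_n^r$ to $\mk\mS_r$ (the subalgebra generated by the crossings $s_i$) coincides with the signed permutation action of $\mk\mS_r$ on $V^{\otimes r}$; this is the natural $\mathfrak{gl}(n|n)$-equivariant action, and hence in particular $\pe(n)$-equivariant. Classical super Schur--Weyl duality for $\mathfrak{gl}(n|n)$ then identifies $\phi_n^r(y_\lambda)\cdot V^{\otimes r}$ with the super Schur functor $\mathbb{S}^\lambda(V)$ applied to $V=\mk^{n|n}$, which vanishes precisely when $\lambda$ fails the $(n,n)$-hook condition, i.e.\ when $\lambda_{n+1}>n$. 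Thus $y_\lambda\in K$.

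To pass from $y_\lambda\in K$ to $e_\lambda\in K$, I would suppose for contradiction that $e_\lambda\notin K$. Then $(A_r/K)(e_\lambda+K)$ is a nonzero quotient of $P_r(\lambda)=A_re_\lambda$ and inherits the simple head $L_r(\lambda)$; but any $(A_r/K)$-module is killed by $K$, giving $K\cdot L_r(\lambda)=0$ and in particular $y_\lambda\cdot L_r(\lambda)=0$. This will contradict the claim that $y_\lambda$ acts non-trivially on $L_r(\lambda)$. To verify that claim, I appeal to the cellular structure of $A_r$ from~\cite{PB1}: since $\lambda\vdash r$ labels a top cell, the cell module $W_r(\lambda)$ restricts to the Specht module $S^\lambda$ over $\mk\mS_r$, and since $S^\lambda$ is a simple $\mk\mS_r$-module in characteristic zero, any $A_r$-submodule (being in particular a $\mk\mS_r$-submodule) is trivial, forcing $L_r(\lambda)=W_r(\lambda)\cong S^\lambda$ as a $\mk\mS_r$-module. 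The Young symmetrizer $y_\lambda$ is a nonzero scalar multiple of the primitive idempotent for $S^\lambda$ in $\mk\mS_r$, so acts as a nonzero scalar on $S^\lambda$, yielding the required contradiction.

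The main obstacle is the super Schur--Weyl input: pinning down the vanishing $\mathbb{S}^\lambda(\mk^{n|n})=0$ for $\lambda_{n+1}>n$ with a clean reference, and confirming that the $\mk\mS_r$-action induced by $\phi_n^r$ really is the signed permutation action (which should be immediate from the description of the generators of $\cA$ in~\ref{anti} and the recipe for $F_n$ on $\cup$). The ideal-theoretic passage from $y_\lambda\in K$ to $e_\lambda\in K$ is then essentially formal.
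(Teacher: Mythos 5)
Your proof is correct. The first half is the paper's own argument in slightly different clothing: the paper likewise restricts $\phi_n^r$ to $\mk\mS_r$, notes via \cite[Theorem~4.14]{Berele} that the image commutes with the $\mathfrak{gl}(n|n)$-action on $V^{\otimes r}$, and invokes the hook condition \cite[Theorem~3.20]{Berele} to kill the symmetric-group idempotent $f_\lambda$ attached to the Specht module $S^\lambda$ (your Young symmetrizer $y_\lambda$ is a nonzero scalar multiple of such an $f_\lambda$). Where you genuinely diverge is the transfer to $e_\lambda$: the paper quotes \cite[Corollary~4.3.3]{PB1}, which says that $e_\lambda$ occurs (up to conjugation) in the decomposition of $f_\lambda$ into primitive idempotents of $A_r$, so $\phi_n^r(f_\lambda)=0$ forces $\phi_n^r(e_\lambda)=0$ with no module theory at all. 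You instead pass through the simple module, using $L_r(\lambda)=W_r(\lambda)\cong S^\lambda$ as $\mk\mS_r$-modules for $\lambda\vdash r$ together with the two-sided ideal $K=\ker\phi_n^r$; this is a touch longer but self-contained given the cell structure, and both transfers ultimately rest on the same underlying fact, namely that $S^\lambda$ occurs in $L_r(\lambda)|_{\mk\mS_r}$. One small slip to fix: $y_\lambda$ does not act as a nonzero scalar on $S^\lambda$ --- it is a scalar multiple of a primitive, not central, idempotent, hence a rank-one projection whenever $\dim S^\lambda>1$. All you need, and all that is true, is $y_\lambda\cdot S^\lambda\neq 0$, which holds because a primitive idempotent of a semisimple algebra does not annihilate the simple module it corresponds to.
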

\begin{proof}
By Lemma~\ref{TrivLem}, it suffices to prove that~$\phi^r_n(e_\lambda)=0$, with~$r=|\lambda|$.

When we restrict the action~$\phi^r_n$ from~$A_r$ to the subalgebra~$\mk\mS_r$ (see~\ref{defAr}), the image commutes with the~$\mathfrak{gl}(n|n)$ action on~$V^{\otimes r}$, see~\cite[Theorem~4.14]{Berele}. Hence, we have a commuting diagram:
\[
\xymatrix{
 *+\txt{$A_r$} \ar[rr]^{\phi^r_n}&& *+\txt{$\End_{\mathfrak{pe}(n)}(V^{\otimes r})^{\op}$} \\
*+\txt{$\mk\mS_r$}\ar[rr]^{}\ar@{^{(}->}[u]&&*+\txt{$\End_{\mathfrak{gl}(n|n)}(V^{\otimes r})^{\op}$.}\ar@{^{(}->}[u]
}
\]

Now, let~$f_\lambda\in\mk\mS_r$ be a primitive idempotent corresponding to the (simple) Specht module for~$\lambda$. By the choice of the labelling of simple modules over $A_r$, $e_\lambda$ appears (up to conjugation) in the decomposition of~$f_\lambda$ into primitive idempotents in~$A_r$, see e.g.~\cite[Corollary~4.3.3]{PB1}.

The hook condition in~\cite[Theorem~3.20]{Berele} and the above commuting diagram together imply that~$\phi_n^r(f_\lambda)=0$ if $\lambda_{n+1}>n$, so in particular $\phi_n^r(e_\lambda)=0$. 
\end{proof}

\begin{proof}[Proof of Theorem~\ref{ThmFunctor}]
By additivity it suffices to show that the restriction~$\cA\to \Fn$ is full. By~\cite[Section~5.3]{Kujawa}, the
surjectivity of
$$\Hom_{\cA}(i,j)\;\to\; \Hom_{\pe(n)}(V^{\otimes j},V^{\otimes i}),$$
for any $i,j\in\mN$, is equivalent to surjectivity of 
$$\Hom_{\cA}(0,i+j)\;\to\; \Hom_{\pe(n)}(V^{\otimes (j+i)},\mk).$$
The latter is precisely~\cite[Section~4.9]{DLZ}.

As $F_n$ is a monoidal superfunctor, its Ob-kernel $\KK_n$ is a thick tensor ideal. By Theorem~\ref{Classif}, we thus have $\KK_n=\ideal_k$ for some $k\in\mN$.

By \cite[Corollary~8.2.7]{PB1}, for~$\lambda\vdash r$ with~$\lambda_{n+1}=0$, we have $\phi_n^r(e_\lambda)\not=0$. By Lemma~\ref{TrivLem}, we find that in particular $R(\partial^{n})\not\in\KK_n$, which implies~$\KK_n\not=\ideal_k$ when~$k\le n$.

By Lemma~\ref{LemKernel}, we have $R(\lambda)\in\KK_n$ for~$\lambda=(n+1,\ldots,n+1)$, the partition of~$(n+1)^2$ of length $n+1$. As $\partial^{k}\not\subseteq\lambda$ for~$k>n+1$, we find $\KK_n\not=\ideal_k$ when~$k> n+1$. This concludes the proof.
\end{proof}

\subsection{Tensor powers of the natural representation of~$\mathfrak{pe}(n)$}\label{SecTenProj} The results in the previous subsection allow to classify the indecomposable summands in the~$\mathfrak{pe}(n)$-module~$V^{\otimes r}$ up to isomorphism. In this subsection we further determine when the direct summands are projective.

\begin{thm}\label{ThmProj}
The assignment 
$$\lambda\mapsto R_{n}(\lambda):=F_n(R(\lambda)),$$
is a bijection between $\PA^{\le n}$ and the set of isomorphism classes of indecomposable summands in~$\bigoplus_{r\in\mN}V^{\otimes r}$. The module $F_n(R_n(\lambda))$ appears as a direct summand in~$V^{\otimes r}$ if $|\lambda|\in\JJJ^0(r)$ and is projective if and only if $\lambda\in\PA^n$.
\end{thm}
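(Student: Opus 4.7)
My plan is to combine Theorem~\ref{ThmFunctor} (fullness of $F_n$ with $\Ob$-kernel $\ideal_{n+1}$) with the Krull--Schmidt structure of $\DA$ for the bijection and appearance statements, and then deduce the projectivity criterion via a thick-tensor-ideal argument based on Theorem~\ref{Classif}. Throughout, Theorem~\ref{ThmFunctor} tells us $R_n(\lambda)\neq 0$ iff $\lambda\in\PA^{\le n}$.

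First I would decompose the object $(r,e_r^\ast)\in\DA$. The Krull--Schmidt decomposition of the regular left $A_r$-module gives $A_r\cong\bigoplus_{\lambda\in\Lambda_r}P_r(\lambda)^{\oplus\dim L_r(\lambda)}$, and by Remark~\ref{RemThm} each primitive summand is isomorphic in $\DA$ to the corresponding $R(\lambda)$. Applying $F_n$, using that $F_n((r,e_r^\ast))=V^{\otimes r}$ and Theorem~\ref{ThmFunctor} to discard the summands with $\lambda\in\PA^{\ge n+1}$, yields $V^{\otimes r}\cong\bigoplus_{\lambda\in\Lambda_r\cap\PA^{\le n}}R_n(\lambda)^{\oplus\dim L_r(\lambda)}$. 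Since $\dim L_r(\lambda)\ge 1$ for $\lambda\in\Lambda_r$, this already gives the appearance statement: $R_n(\lambda)$ is a summand of $V^{\otimes r}$ whenever $|\lambda|\in\JJJ^0(r)$. It also reduces the bijection to showing the $R_n(\lambda)$, $\lambda\in\PA^{\le n}$, are pairwise non-isomorphic indecomposables.

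Next I would handle indecomposability and injectivity using fullness of $F_n$ together with Krull--Schmidt. Fullness produces a surjection from the local finite-dimensional algebra $\End_{\DA}(R(\lambda))=e_\lambda A_{|\lambda|}e_\lambda$ onto $\End_{\Fn}(R_n(\lambda))^{\op}$; a non-zero quotient of a local algebra is local, so $R_n(\lambda)$ has local endomorphism algebra and is therefore indecomposable. For injectivity, if $R_n(\lambda)\cong R_n(\mu)$, fullness lifts inverse isomorphisms to maps $\phi\colon R(\lambda)\to R(\mu)$ and $\psi\colon R(\mu)\to R(\lambda)$ in $\DA$ with $F_n(\psi\phi)=\id$. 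If $\psi\phi$ were non-invertible in $\End_{\DA}(R(\lambda))$ it would lie in the Jacobson radical of this finite-dimensional local algebra, hence be nilpotent, forcing $\id=F_n(\psi\phi)$ to be nilpotent, a contradiction. So $\phi$ is an isomorphism in $\DA$, and $\lambda=\mu$ by Theorem~\ref{ThmClass}.

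For the projectivity criterion, let $\cJ\subseteq\DA$ be the full additive subcategory of objects $X$ with $F_n(X)$ projective in $\Fn$. It is closed under direct summands, and closed under tensoring by arbitrary objects because the class of projective modules is a tensor ideal in $\Fn$ (a standard fact for $\pe(n)$, since $\Fn$ is Frobenius and $V$ is rigid, so $\Hom(-\otimes M^\ast,P)$ exactness for projective $P$ gives injectivity, hence projectivity, of $P\otimes M$). Thus $\cJ$ is a thick tensor ideal and equals $\ideal_k$ for a unique $k$ by Theorem~\ref{Classif}. Identifying $k=n$ then reduces to two endpoint checks: (i) $R_n(\partial^n)$ is projective, giving $\ideal_n\subseteq\cJ$ and hence $k\le n$; and (ii) for $n\ge 1$, $R_n(\partial^{n-1})$ is not projective, giving $\ideal_{n-1}\not\subseteq\cJ$ and hence $k\ge n$. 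The hard part will be these two endpoint checks; I expect to establish them via the explicit identification of the simple top of each $R_n(\lambda)$ advertised in the introduction: for $\lambda=\partial^n$ the top is a typical highest weight so that $R_n(\partial^n)$ coincides with its own projective cover (a Kac-type module), while for $\lambda=\partial^{n-1}$ the top has a strictly larger projective cover. Once (i) and (ii) are in hand, Theorem~\ref{Classif} pins down $\cJ=\ideal_n$, whose non-zero indecomposable objects are precisely the $R_n(\lambda)$ with $\lambda\in\PA^n$, completing the proof.
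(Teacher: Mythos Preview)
Your treatment of the bijection and the appearance claim is correct and matches the paper's (terse) argument, just with the Krull--Schmidt and fullness steps spelled out in more detail. Your thick-tensor-ideal setup for projectivity is also the same: the preimage $\cJ$ of $\pe(n)$-proj is a thick tensor ideal, hence $\cJ=\ideal_k$ for some $k$ by Theorem~\ref{Classif}.

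The gap is in your plan to pin down $k=n$ via the endpoint checks (i) and (ii). You propose to verify these using ``the explicit identification of the simple top of each $R_n(\lambda)$ advertised in the introduction''. In the paper this identification is Theorem~\ref{ThmEl}, which is proved \emph{after} Theorem~\ref{ThmProj} and depends on it (via Theorem~\ref{ThmProj2} and Lemma~\ref{Lem2core}, whose very first line already assumes $F_n(R(\partial^n))$ is a projective cover). So this route is circular as stated. You would need an independent way to see that $R_n(\partial^n)$ is projective while $R_n(\partial^{n-1})$ is not, and no such direct computation is available at this point of the paper.

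The paper bypasses both endpoint checks entirely. Instead it proves Lemma~\ref{LemProj} and Corollary~\ref{ProjCell}: $\pe(n)$-proj is a single (maximal) cell in $\Fn$. Since $F_n$ is full, the image of $\ideal_k/\ideal_{n+1}$ must therefore be a single cell, forcing $k=n$. The only inputs are that every indecomposable projective occurs as a summand of some $V^{\otimes j}$ (\cite[Lemma~8.3.2]{PB1}), which gives $\ideal_{n+1}\subsetneq\cJ$ and hence $k\le n$, and the easy tensor argument in Lemma~\ref{LemProj} that any two indecomposable projectives are cell-equivalent. No highest-weight computation is needed. Replace your endpoint checks with this cell argument and the proof goes through.
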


We denote the full subcategory of projective modules in~$\Fn$ by~$\mathfrak{pe}(n)$-proj.

\begin{thm}\label{ThmProj2}
The subcategory~$\pe(n)${\rm -proj} is the unique maximal cell in~$\Fn$. The functor~$F_n$ restricts to an essentially surjective functor~$\ideal_n\to \pe(n)${\em-proj} with Ob-kernel $\ideal_{n+1}$. Hence, there exists a superfunctor
$$\ideal_n/\ideal_{n+1}\;\to\; \pe(n)\mbox{{\rm-proj},}$$
which is essentially bijective and full.
\end{thm}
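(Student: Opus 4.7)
My plan is to deduce the theorem from Theorems \ref{ThmFunctor} and \ref{ThmProj} together with the standard auxiliary fact that every indecomposable projective in $\Fn$ arises as a direct summand of some $V^{\otimes r}$. First, I would verify that $\pe(n)\text{-proj}$ is a thick tensor ideal: rigidity of $\Fn$ gives $\Hom_{\Fn}(P \otimes Q, -) \cong \Hom_{\Fn}(P, - \otimes Q^*)$, so tensoring a projective with anything preserves projectivity, and thickness of projectives is standard. Next, $F_n$ restricts to a functor $\ideal_n \to \pe(n)\text{-proj}$: by Theorem \ref{Classif}, the indecomposables of $\ideal_n$ are $R(\lambda)$ with $\partial^n \subseteq \lambda$; if additionally $\partial^{n+1} \subseteq \lambda$ then $R(\lambda) \in \ideal_{n+1}$ and $F_n(R(\lambda)) = 0$ by Theorem \ref{ThmFunctor}, while for $\lambda \in \PA^n$ Theorem \ref{ThmProj} gives $R_n(\lambda)$ projective.

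Essential surjectivity onto $\pe(n)\text{-proj}$ uses the auxiliary fact: every indecomposable projective is a summand of some $V^{\otimes r}$ and hence, by Theorem \ref{ThmProj}, isomorphic to $R_n(\lambda)$ for some $\lambda \in \PA^{\le n}$; projectivity forces $\lambda \in \PA^n$, so it lies in the image of $\ideal_n$. Since $\ideal_{n+1} \subsetneq \ideal_n$ (Theorem \ref{Classif}), the Ob-kernel of $F_n|_{\ideal_n}$ is exactly $\ideal_{n+1}$, and the universal property of the quotient produces a superfunctor $\ideal_n/\ideal_{n+1} \to \pe(n)\text{-proj}$ which has trivial Ob-kernel, is essentially surjective by the above, and is full because $F_n$ is full (Theorem \ref{ThmFunctor}). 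Combining trivial Ob-kernel and essential surjectivity gives essential bijectivity.

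For the cell claim, maximality of $\pe(n)\text{-proj}$ is formal from the ideal property: any $X$ with $[P] \preceq [X]$ for a projective $P$ is a summand of $P \otimes Z$ and hence projective, so $[X] \sim [P]$. That all projective indecomposables form one cell I would transport from $\DA$: given $\lambda, \mu \in \PA^n$, the cell structure of $\DA$ in Section \ref{cells} produces $Z, Z' \in \DA$ with $R(\mu)$ a direct summand of $R(\lambda) \otimes Z$ and vice versa, and the monoidal functor $F_n$ preserves direct summands, yielding $[R_n(\lambda)] \sim [R_n(\mu)]$ in $\Fn$. For uniqueness, given a non-projective indecomposable $Q$, choose $r$ such that $R_n(\partial^n)$ is a summand of $V^{\otimes r}$ (Theorem \ref{ThmProj}); then $Q \otimes R_n(\partial^n)$ is a non-zero projective summand of $Q \otimes V^{\otimes r}$, and any indecomposable summand $P$ of it witnesses $[Q] \preceq [P]$. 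The converse inequality $[P] \preceq [Q]$ would force $Q$ to be a summand of a projective, contradicting non-projectivity, so $[Q]$ is strictly below $[P]$ and hence not maximal.

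The main obstacle is the auxiliary fact that every indecomposable projective appears in some tensor power of $V$; this is standard for periplectic Lie superalgebras but must be invoked from the literature, whereas the remainder of the argument is a formal consequence of Theorems \ref{ThmFunctor} and \ref{ThmProj} and the ideal structure on projectives.
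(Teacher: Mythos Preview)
Your argument is correct and assembles the same ingredients as the paper: Theorem~\ref{ThmFunctor} (fullness and $\Ob$-kernel), Theorem~\ref{ThmProj} (identification of the projective summands), and the auxiliary fact from \cite[Lemma~8.3.2]{PB1} that every indecomposable projective lies in some $V^{\otimes r}$.

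The one point of genuine difference is how you establish that all indecomposable projectives lie in a single cell. The paper proves this directly inside $\Fn$ (Lemma~\ref{LemProj} and Corollary~\ref{ProjCell}) using the duality $\ast$: given projectives $Q_1,Q_2$, one embeds $Q_1$ and $Q_2^\ast$ in tensor powers of $V$ and produces a split surjection $Q_2\otimes V^{\otimes k}\tto Q_1$. You instead transport the cell relation from $\DA$ via $F_n$, invoking that $\lambda,\mu\in\PA^n$ are in the same $\DA$-cell. Both are valid, but note that in the paper's logical order Corollary~\ref{ProjCell} is proved \emph{before} Theorem~\ref{ThmProj} and is actually used in its proof (to force the preimage ideal $\ideal_k$ to satisfy $k=n$). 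Your transport argument therefore relies on Theorem~\ref{ThmProj} as a black box that already encodes the cell statement, so it is not an independent route to it; the paper's duality argument is.
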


\begin{rem}
It will be proved in \cite{Ideals} that this superfunctor is actually an equivalence.
\end{rem}

Note first that since the tensor functor $F_n$ is full, it maps indecomposable objects in~$\DA$ to objects in~$\Fn$ which are indecomposable or zero. We use this fact freely.

Now we prove these two theorems.
There is a duality $\ast$ on~$\Fn$, see \cite[Section~2]{gang}. Furthermore, the right adjoint of~$-\otimes M$ is $-\otimes M^\ast$, for a module~$M$, see e.g. \cite[Section~4.4]{gang}. This implies that~$M\otimes N$ is projective as soon as either $M$ or~$N$ is projective. Consequently, $\mathfrak{pe}(n)$-proj is a thick tensor ideal.

\begin{lemma}\label{LemProj}
Let~$Q_1,Q_2$ be arbitrary indecomposable projective modules in~$\Fn$. Then $Q_1$ is a direct summand of~$Q_2\otimes V^{\otimes k}$ for some $k\in\mN$.
\end{lemma}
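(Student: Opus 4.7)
The plan is to reduce the assertion to a question about composition-factor multiplicities of simple $\pe(n)$-modules via Frobenius reciprocity, exploiting the self-duality of~$V$ in~$\Fn$.

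First, write $Q_i = P(L_i)$ for the projective cover of a simple object $L_i$ ($i = 1, 2$). Since $\pe(n)$-proj is a thick tensor ideal, $Q_2 \otimes V^{\otimes k}$ is projective for every $k \geq 0$, and the indecomposable projective $Q_1 = P(L_1)$ is a direct summand if and only if $L_1$ appears in its top, equivalently $\Hom_{\pe(n)}(Q_2 \otimes V^{\otimes k}, L_1) \neq 0$. The defining odd bilinear form~$\beta$ is non-degenerate, so $v \mapsto \beta(v,-)$ is an odd $\pe(n)$-linear isomorphism $V \to V^*$; because morphism spaces in~$\Fn$ carry no parity restriction, it is an honest isomorphism in~$\Fn$. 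Consequently $-\otimes V$ is self-adjoint on~$\Fn$, and iterating the adjunction yields
\[
\Hom_{\pe(n)}(Q_2 \otimes V^{\otimes k}, L_1) \;\cong\; \Hom_{\pe(n)}(Q_2, L_1 \otimes V^{\otimes k}) \;=\; [L_1 \otimes V^{\otimes k} : L_2],
\]
the last equality holding because $Q_2$ is the projective cover of $L_2$.

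The task therefore reduces to showing that for any two simples $L_1, L_2$ in~$\Fn$ there exists $k \geq 0$ with $L_2$ appearing as a composition factor of $L_1 \otimes V^{\otimes k}$. This is a generation statement for~$\Fn$: every finite-dimensional simple $\pe(n)$-module occurs as a composition factor of some tensor power of $V$. Combined with the fact that $\mk$ is a quotient of $V \otimes V$ (via~$\beta$) and hence appears as a composition factor of $V^{\otimes 2}$, one can chain these observations to realise $L_2$ as a composition factor of $L_1 \otimes V^{\otimes k}$ for sufficiently large $k$.

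The main obstacle is to establish the generation statement cleanly in the periplectic setting. The most natural route is a highest-weight argument: every dominant integral weight~$\lambda$ of a simple in~$\Fn$ is a $\mZ_{\geq 0}$-combination of the weights of $V$, so that standard highest-weight theory exhibits $L(\lambda)$ as a composition factor of $V^{\otimes|\lambda|}$. Alternatively one may invoke the translation-functor analysis for $\pe(n)$ developed in~\cite{gang}. Once the generation statement is in hand, the lemma follows immediately from the reduction in the second paragraph.
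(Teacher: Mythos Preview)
Your reduction via self-adjointness of $-\otimes V$ to the condition $[L_1\otimes V^{\otimes k}:L_2]\neq 0$ is correct, but what follows has two genuine gaps.

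First, you identify the required condition with the generation statement that every simple occurs in some $V^{\otimes m}$; these are not the same. Knowing $L_2$ is a composition factor of $V^{\otimes m}$ does not by itself put $L_2$ into $L_1\otimes V^{\otimes k}$; you would first need $\mk$ to occur as a composition factor of $L_1\otimes V^{\otimes j}$ for some $j$, and the observation that $\mk$ is a quotient of $V\otimes V$ does not supply this. (A correct bridge is to apply generation to the dual simple $L_1^\ast$, getting $L_1^\ast$ as a subquotient of some $V^{\otimes j}$; then $\mk\hookrightarrow L_1\otimes L_1^\ast$ becomes a subquotient of $L_1\otimes V^{\otimes j}$, after which your chain goes through.)

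Second, the highest-weight argument you sketch does not establish generation: exhibiting $\lambda$ as a weight of $V^{\otimes N}$ only shows that some $L(\mu)$ with $\mu\ge\lambda$ is a composition factor, not $L(\lambda)$ itself. The alternative appeal to \cite{gang} would need to be made precise.

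The paper avoids both issues by quoting \cite[Lemma~8.3.2]{PB1}, which already gives that every indecomposable projective is a direct summand of some $V^{\otimes j}$. Taking $Q_1$ as a summand of $V^{\otimes j}$ and $Q_2^\ast$ as a summand of $V^{\otimes i}$, one gets an epimorphism
\[
Q_2\otimes V^{\otimes i+j}\;\twoheadrightarrow\; Q_2\otimes Q_2^\ast\otimes Q_1\;\twoheadrightarrow\; Q_1
\]
via evaluation $Q_2\otimes Q_2^\ast\to\mk$, which splits since $Q_1$ is projective. This stays entirely at the level of direct summands and needs no composition-factor bookkeeping.
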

\begin{proof}
It is well-known that injective and projective modules coincide in~$\Fn$, see e.g.~\cite{gang, Chen}. In particular, the duality $\ast$ maps projective modules to projective modules. We then find that there exists $j,i\in\mN$ such that~$Q_1$ is a direct summand of~$V^{\otimes j}$ and~$Q_2^\ast$ is a direct summand in~$V^{\otimes i}$, by \cite[Lemma~8.3.2]{PB1}. Then we have a composition of epimorphisms
$$Q_2\otimes V^{\otimes i+j}\tto Q_2\otimes Q_2^\ast\otimes Q_1\tto Q_1.$$ 
Since~$Q_1$ is projective, the corresponding epimorphism splits, which concludes the proof.
\end{proof}

\begin{cor}\label{ProjCell}
The full subcategory~$\pe(n)${\rm -proj} is the unique maximal cell in~$\Fn$.
\end{cor}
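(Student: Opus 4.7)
The plan is to assemble three ingredients: that $\pe(n)$-proj is a thick tensor ideal (noted just before Lemma~\ref{LemProj}), Lemma~\ref{LemProj} itself, and the elementary observation that $M\otimes N\neq 0$ whenever $M,N$ are nonzero objects of $\Fn$, since $\dim(M\otimes N)=\dim(M)\dim(N)$ on the level of the underlying super vector spaces.

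First, I would observe that any two indecomposable projectives $Q_1,Q_2$ satisfy $[Q_1]\sim[Q_2]$: Lemma~\ref{LemProj} directly gives $Q_1$ as a direct summand of $Q_2\otimes V^{\otimes k}$ for some $k$, so $[Q_2]\preceq[Q_1]$, and the symmetric statement gives the reverse inequality. Thus all indecomposable projectives lie in a single equivalence class under $\sim$, and the corresponding cell subcategory is exactly $\pe(n)$-proj.

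Next I would check that this cell is maximal. If $Q$ is an indecomposable projective and $[Q]\preceq[N]$ for some indecomposable $N\in\Ob\Fn$, then $N$ is a direct summand of $Q\otimes Z$ for some $Z$. Since $\pe(n)$-proj is a thick tensor ideal, $Q\otimes Z$ is projective, hence so is $N$; thus $[N]\sim[Q]$. In particular no indecomposable outside $\pe(n)$-proj can lie strictly above the projective cell.

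The crux is uniqueness. Let $[M]$ be any maximal class, and fix an indecomposable projective $P$ (which exists, e.g.\ as any indecomposable summand of a sufficiently high $V^{\otimes r}$, cf.\ the proof of Lemma~\ref{LemProj}). The tensor product $M\otimes P$ is nonzero by the dimension observation above, and it is projective since $\pe(n)$-proj is a thick tensor ideal; in particular it admits an indecomposable projective summand $P'$. This gives $[M]\preceq[P']$, and maximality of $[M]$ forces $[M]\sim[P']$, so $M$ is itself projective. I do not expect a serious obstacle: the only delicate point is that one cannot invoke a naive rigidity argument (since $\unit$ need not split off from $M\otimes M^*$ because the supertrace on $V$ vanishes), but the nonvanishing of $\dim(M\otimes P)$ bypasses this subtlety entirely.
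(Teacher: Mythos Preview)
Your proof is correct and follows essentially the same approach as the paper: Lemma~\ref{LemProj} gives that all indecomposable projectives form a single cell, and uniqueness of maximality follows from the observation that tensoring any nonzero $M$ with a projective yields a nonzero projective, hence $[M]\preceq$ the projective class. The only cosmetic difference is that the paper tensors with $V^{\otimes k}$ (which contains a projective summand for $k\gg 0$) rather than directly with $P$, and leaves the nonvanishing of $M\otimes P$ implicit where you make it explicit via dimension.
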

\begin{proof}
Lemma~\ref{LemProj} implies that all indecomposable projective modules are equivalent for the relation of \ref{cells}. This implies that the thick tensor ideal is in fact a cell. Moreover, since~$V^{\otimes k}$ contains a projective direct summand for~$k>>$ (\cite[Lemma~8.3.2]{PB1}), $M\otimes V^{\otimes k}$ for any module~$M$ contains a projective direct summand, showing that~$\pe(n)${\rm -proj} is the unique maximal cell.\end{proof}

\begin{proof}[Proof of Theorem~\ref{ThmProj}]
The classification of indecomposable summands is an immediate consequence of Theorem~\ref{ThmFunctor} and Lemma~\ref{TrivLem}. 

The projective modules form a thick tensor ideal in~$\Fn$. This implies that the corresponding pre-image under the tensor superfunctor~$F_n$ also forms a thick tensor ideal $\ideal$ in~$\DA$. By Theorem~\ref{ThmFunctor}, we have $\ideal_{n+1}\subseteq\ideal$. Since each projective module appears as direct summands of~$V^{\otimes j}$ for some $j\in\mN$ (\cite[Lemma~8.3.2]{PB1}), we even have~$\ideal_{n+1}\subsetneq \ideal$. By Theorem~\ref{Classif}, we thus have $\ideal=\ideal_k$ for some $k\le n$. 
Because $F_n$ is full (Theorem~\ref{ThmFunctor}), Corollary~\ref{ProjCell} implies that~$\ideal_k/\ideal_{n+1}$ must be a cell, so $k=n$.
\end{proof}

\begin{proof}[Proof of Theorem~\ref{ThmProj2}]
This follows from Corollary~\ref{ProjCell} and Theorems~\ref{ThmFunctor} and~\ref{ThmProj}.
\end{proof}

\subsection{A bijection~$\Par\to \mP(\mZ)$}

\subsubsection{}\label{mark} For an arbitrary partition~$\lambda$, we ``mark'' its Young diagram as follows. We start by putting a diamond $\diamond$ in the right-most box of the bottom row. Then we move up the rows as follows. In any given row we put a $\diamond$ in the right-most box if the number of $\diamond$ we have added so far is strictly smaller than the column of the box in question. Note that the column of that box is precisely $\lambda_i$, with $i$ the row considered.

\begin{ex} The following are Young diagrams marked according to the procedure in \ref{mark}.
\begin{gather*}
\young(\leeg \diamond),\qquad\young(\leeg,\leeg,\diamond),\qquad \young(\leeg\leeg,\leeg \diamond,\leeg \diamond),\qquad \young(\leeg\leeg,\leeg\diamond,\leeg,\diamond),\qquad \young(\leeg\leeg \diamond,\leeg\leeg,\leeg \diamond,\leeg \diamond),\qquad \young(\leeg\leeg\leeg \diamond,\leeg \diamond,\diamond).
\end{gather*}
\end{ex}

\begin{lemma}
For $\lambda\in\Par^n$, we have precisely $n$ diamonds in the marking.
\end{lemma}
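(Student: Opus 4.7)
The plan is to reformulate the marking procedure as a simple recursion and extract a closed formula for the total diamond count. Let $\ell$ denote the length of $\lambda$ and, for $1 \le i \le \ell+1$, let $a_i$ be the number of diamonds placed while processing rows $\ell, \ell-1, \ldots, i$; thus $a_{\ell+1} = 0$ and the total count is $N(\lambda) := a_1$. Directly from the rule in~\ref{mark}, $a_i = a_{i+1}+1$ if $a_{i+1} < \lambda_i$, and $a_i = a_{i+1}$ otherwise. Starting from $a_\ell = 1 \le \lambda_\ell$ and using $\lambda_{i+1} \le \lambda_i$, a short induction shows $a_j \le \lambda_j$ for every $1 \le j \le \ell$; this bound lets me rewrite the case-split as
\[
a_i \;=\; \min\bigl(a_{i+1}+1,\;\lambda_i\bigr).
\]

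Unfolding this recursion (and using the convention $\lambda_j = 0$ for $j > \ell$), I obtain the closed form
\[
N(\lambda) \;=\; \min_{j \ge 1}\bigl(\lambda_j + j - 1\bigr).
\]
With this in hand, the lemma reduces to translating the inequality $N(\lambda) \ge n$ into the containment $\partial^n \subseteq \lambda$. Since $\partial^n = (n, n-1, \ldots, 1)$, we have $\partial^n \subseteq \lambda$ iff $\lambda_j \ge n+1-j$ for $1 \le j \le n$, equivalently $\lambda_j + j - 1 \ge n$ on that range. This already forces $\ell \ge n$, so for $j > n$ one automatically has $\lambda_j + j - 1 \ge j - 1 \ge n$. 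Hence $\partial^n \subseteq \lambda$ is equivalent to $\lambda_j + j - 1 \ge n$ for all $j \ge 1$, i.e.\ to $N(\lambda) \ge n$.

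Applying this equivalence at both $n$ and $n+1$ gives: $\lambda \in \PA^n$ (that is, $\partial^n \subseteq \lambda$ and $\partial^{n+1} \not\subseteq \lambda$) iff $N(\lambda) = n$, which is exactly the lemma. I do not expect any serious obstacle in this argument; the only step needing care is the induction establishing $a_j \le \lambda_j$, which justifies collapsing the case-split defining $a_i$ into the clean expression $\min(a_{i+1}+1, \lambda_i)$.
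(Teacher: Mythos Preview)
Your proof is correct, and it takes a genuinely different route from the paper. The paper argues in two halves: first, given a marking with $n$ diamonds, it deletes the unmarked rows to produce a partition $\mu$ with $\mu_i \ge n-i+1$, whence $\partial^n \subseteq \mu \subseteq \lambda$; second, it shows by induction on $n$ (peeling off the first row) that $\partial^n \subseteq \lambda$ forces at least $n$ diamonds. Your argument instead extracts the closed formula $N(\lambda) = \min_{j \ge 1}(\lambda_j + j - 1)$ from the recursion $a_i = \min(a_{i+1}+1,\lambda_i)$, and then reads off directly that $N(\lambda) \ge n$ is equivalent to $\partial^n \subseteq \lambda$. The paper's approach stays closer to the pictorial description of the marking; yours is more algebraic and has the bonus of giving an explicit expression for the diamond count that could be reused elsewhere (for instance, it makes the later bijection $d:\Par^n \to \mP(\mZ;n)$ more transparent). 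The one place to be careful, as you note, is the bound $a_j \le \lambda_j$: your induction handles this correctly, since if $a_{i+1} \ge \lambda_i$ then the hypothesis $a_{i+1} \le \lambda_{i+1} \le \lambda_i$ forces equality throughout.
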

\begin{proof}
Assume that we have precisely $n$ $\diamond$ in the marking of $\lambda\in\Par$. Take the marked Young diagram obtained from $\lambda$ by removing all rows without $\diamond$. Denote the corresponding partition by~$\mu$. By construction, the $\diamond$ in the diagram corresponds to the marking for $\mu$ as in~\ref{mark}. Now we have a partition~$\mu$ with $n$ rows and a $\diamond$ in each row. This means that $\mu_i\ge n-i+1$, for all $1\le i\le n$. Hence, we have $\partial^n\subset\mu\subset\lambda$.

To conclude the proof it suffices to show that $\partial^n\subset\lambda$ implies that we have at least $n$ $\diamond$ in the marking of $\lambda$. We do this by induction on~$n$. For $n\in\{0,1\}$, this is trivial. Now take~$\lambda\in\Par^{\ge n}$ and denote by~$\nu$ the partition obtained by deleting the first row in~$\lambda$. By construction, $\nu\in \Par^{\ge n-1}$. If $\nu$ has at least $n$ boxes in its marking, then clearly so does $\lambda$. By the induction hypothesis, the only other option is that $\nu$ has precisely $n-1$ $\diamond$. To obtain the marking in~$\lambda$ we just take the one in~$\nu$ and need to decide whether a $\diamond$ needs to be added in the upper row in~$\lambda$. Since $\lambda_1\ge n$ and we have only added $n-1$ boxes so far, this is always the case. Hence the marking in~$\lambda$ always contains at least $n$ $\diamond$.
\end{proof}


\begin{ddef}\label{Defdlam}
For any $\lambda\in \Par$, we denote by~$\widetilde{d}_\lambda\subset\mZ$ the set with as elements the contents of the boxes in~$\lambda$ which contain a $\diamond$ according to the marking in \ref{mark}. Denote by~$d_\lambda$ the set obtained from $\widetilde{d}_\lambda$ by subtracting $1$ from each element.
\end{ddef}

We can consider the corresponding maps $d: \Par^n\to \mP(\mZ;n)$, for all $n\in\mN$, recall here the notation $\mP(\mZ;n)$ from Section \ref{SecPrel}. It will follow from~\ref{bij} that this is actually a bijection. Consequently, we find that $d$ is a bijection
$$d:\,\Par\,\stackrel{\sim}{\to}\,\mP(\mZ),\quad \lambda\mapsto d_\lambda.$$

\begin{lemma}\label{Lemaddq}
Consider $q\in\mZ$ and $\lambda\in\Par^n$ with addable $q$-box, with $\mu:=\lambda\boxplus q\in\Par^n$. Then we have
$\TT_q(R(\lambda))\cong R(\mu)$, and
\begin{enumerate}[(i)]
\item if $\lambda$ has a marked box with content $q-1$, then $d_\mu$ is obtained from $d_\lambda$ by replacing $q-2\in d_\lambda$ by $q-1$.
\item if $\lambda$ has a marked box with content $q+1$, but no marked box with content $q-1$, then $d_\mu$ is obtained from $d_\lambda$ by replacing $q\in d_\lambda$ by $q-1$.
\end{enumerate}
\end{lemma}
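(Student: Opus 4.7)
The first part, $\TT_q(R(\lambda))\cong R(\mu)$, is immediate from the combinatorial description of $\TT_q$ on the split Grothendieck group. Indeed, Definition~\ref{defaq} gives $\TT_q(R(\lambda))\cong\bigoplus_{\kappa}R(\kappa)^{\oplus\aac^q_{\lambda\kappa}}$, while Propositions~\ref{Propaq} and~\ref{PropExis} evaluate $\sum_{\kappa}\aac^q_{\lambda\kappa}\kappa=\Xi(T_q)(\lambda)=\mu$ whenever $\lambda$ has an addable $q$-box. Hence $\aac^q_{\lambda\kappa}=\delta_{\kappa\mu}$.

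For parts (i) and (ii) the plan is to run the marking procedure of~\ref{mark} on $\lambda$ and $\mu$ in parallel from the bottom up and track where the two diverge. Write $i$ for the row of the added box, so that $\lambda_i=q+i-1$, $\mu_i=q+i$, and $\lambda_j=\mu_j$ for all $j\ne i$. A basic observation is that for each integer $c$ the equation $\lambda_{i'}-i'=c$ has at most one solution: if $i_1<i_2$ then $\lambda_{i_1}-\lambda_{i_2}\ge 0>i_1-i_2$. Consequently the only row whose rightmost box could have content $q-1$ is row $i$ itself, and the only candidate row for a marked $(q+1)$-box has index $i'<i$ with $\lambda_{i'}=q+1+i'$. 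Combining this with $\lambda_{i-1}\ge\lambda_i+1=q+i$, which follows from the $q$-box being addable at row $i$, one checks that such an $i'$ must equal $i-1$ and satisfies $\lambda_{i-1}=q+i$.

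For (i), the marked $(q-1)$-box in $\lambda$ sits in row $i$, so row $i$ is marked in $\lambda$ and the diamond count $k$ of the rows strictly below $i$ satisfies $k\le q+i-2$. Then $\mu_i=q+i>k$ shows that row $i$ is also marked in $\mu$, now with content $q$. After row $i$, both procedures have accumulated $k+1$ diamonds, and since the rows above $i$ are identical in $\lambda$ and $\mu$, they produce the same markings thereafter. Therefore $\widetilde{d}_\mu=(\widetilde{d}_\lambda\setminus\{q-1\})\cup\{q\}$, which translates into the stated replacement in $d$.

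For (ii), the absence of a marked $(q-1)$-box means row $i$ is unmarked in $\lambda$, so $k\ge q+i-1$. The existence of the marked $(q+1)$-box, which by the paragraph above must sit in row $i-1$, requires that the count $k$ going into row $i-1$ satisfies $k<\lambda_{i-1}=q+i$; combined with the lower bound, $k=q+i-1$. In $\mu$, the same $k=q+i-1<q+i=\mu_i$ makes row $i$ marked with content $q$; the count going into row $i-1$ then becomes $q+i$, which is not strictly less than $\mu_{i-1}=q+i$, so row $i-1$ is unmarked in $\mu$. After row $i-1$ the counts in $\lambda$ and $\mu$ coincide, so the remaining markings agree. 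Thus $\widetilde{d}_\mu=(\widetilde{d}_\lambda\setminus\{q+1\})\cup\{q\}$, again matching the stated replacement in $d$. The main obstacle is the bookkeeping of the diamond counts at rows $i$ and $i-1$, namely showing that the hypotheses of (i) and (ii) pin down $k$ so precisely that the change in the marking is local to these two rows.
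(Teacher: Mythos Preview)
Your proof is correct and follows essentially the same approach as the paper's: both argue directly from the bottom-up marking procedure of~\ref{mark}, showing that adding the $q$-box only affects the marking locally at row $i$ in case~(i), and at rows $i$ and $i-1$ in case~(ii). The paper's proof is considerably terser---for~(i) it merely says the claim ``follows immediately from the definition of the marking,'' and for~(ii) it records the single observation that the row processed just after the marked $(q+1)$-box cannot itself be marked---whereas you carry out the diamond-count bookkeeping explicitly and pin down $k$ at each step, which makes the locality of the change transparent.
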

\begin{proof}
That $\TT_q(R(\lambda))\cong R(\mu)$ is Proposition~\ref{PropExis}. Part (i) then follows immediately from the definition of the marking. For part (ii), we first observe that the assumptions imply that the box above the $q+1$-box with a $\diamond$ does not contain a $\diamond$. Indeed, existence of such a $\diamond$ in the marking would imply that the box with content $q-1$ on the rim should have a $\diamond$ too. The claim then follows again from the definition of the marking. 
\end{proof}

\subsection{Link between the labelling sets}
Fix $n\in\mN$. By Theorem~\ref{ThmProj2}, the superfunctor~$F_n$ induces a bijection between $\{R(\lambda)\,|\,\lambda\in \PA^n\}$ and indecomposable projective modules $\{P(\omega)\,|\, \omega\in X_n^+\}$ in~$\Fn$, with notation as explained below. Now we will describe this bijection.

\subsubsection{} The projective module $P(\omega)$ is labelled by the highest weight $\omega$ of its simple top. 
We follow the conventions of \cite[Section~2]{gang} regarding root system and notation of weights. The set of integral dominant weights is given by
$$X^+_n\;:=\;\{\omega=\sum_{i=1}^n\omega_i\varepsilon_i\;|\; \omega_j\in\mZ\mbox{ and }\omega_1\ge \omega_2\ge\cdots\ge \omega_n\}.$$
As in \cite[Section~2.2]{gang}, we introduce the bijection
$$c:\,X^+_n\,\stackrel{\sim}{\to}\, \mP(\mZ;n),\;\, \omega \mapsto c_\omega=\{\omega_1+n-1,\omega_{2}+n-2,\ldots,\omega_n\}.$$

\begin{ddef}
The map $\mathbf{f}$ is given by
$$\mathbf{f}:\,\Par^n\;\to\;X_n^+,\;\; \lambda\mapsto c^{-1}(d_\lambda),$$
with $d_\lambda\subset\mZ$ as in Definition~\ref{Defdlam}.
\end{ddef}

\begin{thm}\label{ThmEl}
For all $\lambda\in\Par^n$, we have
$F_n(R(\lambda))\;\cong \;P(\mathbf{f}(\lambda)).$
\end{thm}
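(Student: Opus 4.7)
The strategy is to reduce Theorem~\ref{ThmEl} to three ingredients: (a) an intertwining of $\TT_q$ on $\DA$ with translation functors on $\Fn$; (b) a base case at $\lambda=\partial^n$; and (c) an induction propagating along additions of rim $2$-hooks. By Theorem~\ref{ThmProj2}, $F_n(R(\lambda))\cong P(\omega(\lambda))$ for a bijection $\omega\colon\Par^n\to X_n^+$, so the claim reduces to $c_{\omega(\lambda)}=d_\lambda$ for all $\lambda\in\Par^n$.

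For the intertwining, I would use that $F_n$ is a monoidal superfunctor and that the Jucys--Murphy element $x_{r+1}$ underlying $\xi$ in Section~\ref{SecXi} is carried by $\phi_n^{r+1}$ to a natural endomorphism of $-\otimes V$ on $\Fn$. Hence the decomposition $\TT=\bigoplus_q\TT_q$ corresponds to a decomposition $-\otimes V=\bigoplus_q\mathbf{T}^n_q$ on $\Fn$ with
$$F_n\circ\TT_q\;\cong\;\mathbf{T}^n_q\circ F_n.$$
Identifying the $\mathbf{T}^n_q$ with the translation functors studied in~\cite{gang}, their action on indecomposable projectives $P(\omega)$ is governed by explicit replacement rules on $c_\omega\in\mP(\mZ;n)$ that mirror those on $d_\mu$ in Lemma~\ref{Lemaddq}.

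For the base case $\lambda=\partial^n$, I would identify $F_n(R(\partial^n))$ via the $\pe(n)$-analogue of Berele--Regev Schur--Weyl duality invoked in the proof of Lemma~\ref{LemKernel}: $R(\partial^n)$ is the direct summand of $V^{\otimes|\partial^n|}$ cut out by the primitive Specht-type idempotent $e_{\partial^n}\in\mk\mathbb{S}_{|\partial^n|}$, whose simple top has the antidominant highest weight $(-1,-2,\ldots,-n)$ in the convention of~\cite[\S2]{gang}. Direct computation gives
$$c_{(-1,-2,\ldots,-n)}\;=\;\{n-2,n-4,\ldots,-n\},$$
and the marking procedure of~\ref{mark} applied to the staircase $\partial^n$ places diamonds in the rightmost box of every row, yielding the same set $d_{\partial^n}=\{n-2,n-4,\ldots,-n\}$.

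For the induction, any $\lambda\in\Par^n$ is obtained from $\partial^n$ by successively adding rim $2$-hooks, and each such addition is realised on $\DA$ as a composition $\TT_{q'}\TT_q$ with $\{q,q'\}$ the two contents of the added domino --- even though the intermediate partition typically leaves $\Par^n$, the composite summand landing back in $\Par^n$ is the one we track. Applying the intertwining and using Lemma~\ref{Lemaddq} together with its analogue for $\mathbf{T}^n_{q}$ acting on $c_\omega$, the projective summand of $F_n\circ\TT_{q'}\TT_q(R(\lambda))\cong\mathbf{T}^n_{q'}\mathbf{T}^n_{q}(F_n(R(\lambda)))$ corresponding to the target $\mu\in\Par^n$ is matched with $P(\mathbf{f}(\mu))$, propagating the identification from $\lambda$ to $\mu$. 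The main obstacle is the precise combinatorial matching in the intertwining step --- verifying that the translation functor action on projectives from~\cite{gang} produces exactly the replacement rule on $c_\omega$ dual to Lemma~\ref{Lemaddq} --- together with confirming that the sign conventions in the base case produce the weight $(-1,-2,\ldots,-n)$ rather than its dominant mirror $(n,n-1,\ldots,1)$.
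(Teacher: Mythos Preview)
Your ingredients (a) and (b) match the paper's approach (Lemma~\ref{LemTTTheta} and Lemma~\ref{Lem2core}), though the paper proves the base case differently: rather than invoking Schur--Weyl duality for~$\pe(n)$, it pins down $c_\omega$ by checking for which~$k$ the functor $\TT_k$ annihilates $R(\partial^n)$ and comparing with the combinatorics in~\cite{gang}. Your base-case sketch conflates the primitive idempotent $e_{\partial^n}\in A_{|\partial^n|}$ with a Specht idempotent in $\mk\mathbb{S}_{|\partial^n|}$; these are related (see the proof of Lemma~\ref{LemKernel}) but not equal, and the direct identification of the simple top's highest weight is not as immediate as you suggest.

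The real gap is in your induction step. The claim that every $\lambda\in\Par^n$ is obtained from~$\partial^n$ by successively adding rim $2$-hooks is false: adding a domino preserves the $2$-core, so from~$\partial^n$ you only reach partitions whose $2$-core is~$\partial^n$. But~$\Par^n$ contains partitions of every $2$-core $\partial^m$ with $m\le n$; for instance $(2,2)\in\Par^2$ has $2$-core~$\varnothing$ and is unreachable from~$\partial^2$ by dominoes. The paper avoids this by adding \emph{single} boxes along a specific path: choose~$k_0$ as in~\ref{k0}, first extend rows $1,\ldots,k_0$ one box at a time, then extend columns $1,\ldots,n-k_0$ one box at a time. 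A short check shows each intermediate partition remains in~$\Par^n$, so Lemma~\ref{Lemaddq} and the translation-functor lemmata of~\cite{gang} apply directly at every step. Incidentally, your concern that the intermediate partition ``typically leaves $\Par^n$'' is unfounded even in your domino setup: if $\lambda,\mu\in\Par^n$ with $\lambda\subset\lambda\boxplus q\subset\mu$, then $\partial^n\subset\lambda\boxplus q$ trivially, and $\partial^{n+1}\subset\lambda\boxplus q$ would force $\partial^{n+1}\subset\mu$, a contradiction.
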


\subsubsection{}\label{bij}
Before proceeding to the proof of Theorem~\ref{ThmEl}, we observe that comparison with Theorem~\ref{ThmProj2} implies that $\mathbf{f}$ is actually a bijection. Consequently, 
$d: \Par^n\to\mP(\mZ;n)$ must also be a bijection.

\subsubsection{}\label{k0}
If $\lambda\in\PA^n$, there is at least one $0\le k_0\le n$ such that $\lambda_{k_0+1}= n-k_0$. Note that, since~$\lambda\in\PA^{\ge n}$, we have $\lambda_i\ge n+1-i$, for all $1\le i\le n$. For $\lambda\in\PA^n$, with such $k_0$, we define~$\nu\in\PA$ by 
$$\nu^t=(\lambda_{k_0+1},\lambda_{k_0+2},\ldots).$$ 
As a special case of Theorem~\ref{ThmEl}, we have the following closed formula for generic~$F_n(R(\lambda))$.

\begin{prop}\label{PropLink}
Take $\lambda\in \PA^n$, with $k_0$ and $\nu$ as in \ref{k0}. Assume that $\nu_i\not=\nu_j$, whenever~$i\not=j$. Then we have
$$F_n(R(\lambda))\;\cong\;P\left(\sum_{i=1}^{k_0}(\lambda_i-n-1)\varepsilon_i+\sum_{i=k_0+1}^n(-\nu_{n-i+1}-k_0)\varepsilon_i\right).$$
\end{prop}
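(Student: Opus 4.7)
The plan is to combine Theorem~\ref{ThmEl} with an explicit computation of the marking from~\ref{mark}: by that theorem $F_n(R(\lambda))\cong P(c^{-1}(d_\lambda))$, so it suffices to read off $d_\lambda$ under the distinctness hypothesis and check that it agrees with $c_\omega$ for the $\omega$ displayed in the statement.

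The first step is a reinterpretation of the hypothesis. Writing $r_v$ for the multiplicity of the value $v$ in $(\lambda_{k_0+1},\lambda_{k_0+2},\ldots)$, one has $\nu_j-\nu_{j+1}=r_j$, so distinctness of the nonzero $\nu_j$ is equivalent to $r_v\ge 1$ for every $1\le v\le n-k_0$; that is, every value in $\{1,\ldots,n-k_0\}$ is attained in the lower part of $\lambda$. With this in hand I would split $\lambda$ into its lower part (rows $k_0+1,\ldots,\ell(\lambda)$) and its upper part (rows $1,\ldots,k_0$), and run the marking procedure on each separately.

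For the lower part, an induction on $v$ would show that the diamond count $s$ enters the constant-$v$ block at $v-1$ and exits at $v$: at the southmost row of the block (row $k_0+\nu_v$) one has $s<v$, so a diamond is placed, while above it $s=v=\lambda_i$ prevents any further mark inside the block. Each value thus contributes exactly one diamond, at row $k_0+\nu_v$ with content $v-k_0-\nu_v$. In the upper part one enters with $s_{k_0+1}=n-k_0$, and the inclusion $\partial^n\subseteq\lambda$ forces $\lambda_i\ge n-i+1>n-i=s_{i+1}$ for $1\le i\le k_0$, so a second short induction marks every upper row and produces content $\lambda_i-i$ there. The total is $n$, as required by the lemma preceding Definition~\ref{Defdlam}, and
\[
\widetilde d_\lambda\;=\;\{\lambda_i-i\mid 1\le i\le k_0\}\;\cup\;\{v-k_0-\nu_v\mid 1\le v\le n-k_0\}.
\]

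Subtracting $1$ and matching against $c_\omega=\{\omega_i+n-i\mid 1\le i\le n\}$ is then a brief bookkeeping: for $1\le i\le k_0$ one has $\omega_i+n-i=\lambda_i-i-1$ directly, and for $k_0<i\le n$, setting $v=n-i+1\in\{1,\ldots,n-k_0\}$ gives $\omega_i+n-i=-\nu_v-k_0+v-1=v-1-k_0-\nu_v$. Hence $c_\omega=d_\lambda$ and Theorem~\ref{ThmEl} concludes. The main obstacle is the lower-part marking; the genericity hypothesis is used precisely to ensure that no value in $\{1,\ldots,n-k_0\}$ is skipped, for otherwise $s$ could jump by more than $1$ across a block boundary (as happens in the upper part of $\lambda=(4,4,2,1)$ with $k_0=2$) and a single block could absorb several diamonds, breaking the clean one-diamond-per-value bookkeeping on which the formula rests.
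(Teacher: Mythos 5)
Your proposal is correct and follows exactly the route the paper intends: the paper states Proposition~\ref{PropLink} without proof as ``a special case of Theorem~\ref{ThmEl}'', and your argument supplies precisely the omitted verification, namely the computation of the marking $\widetilde d_\lambda$ (one diamond at the bottom row of each constant-value block in the lower part, one in every row of the upper part) and the index bookkeeping identifying $d_\lambda$ with $c_\omega$. Your identification of where the distinctness hypothesis on $\nu$ enters — guaranteeing that no value in $\{1,\dots,n-k_0\}$ is skipped, so each lower block absorbs exactly one diamond — is also the right observation.
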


Now we proceed to the proof of Theorem~\ref{ThmEl}.
Consider the exact functor $\Theta=-\otimes V$ on~$\Fn$, with~$V$ the natural $\mathfrak{pe}(n)$-module, as introduced in \cite[Section~4.1]{gang}. This functor has a natural transformation~$\Omega$, see \cite[Lemma~4.1.4]{gang}, according to which we have a decomposition~$\Theta=\bigoplus_{j\in\mZ}\Theta_j$, see \cite[Proposition~4.1.9]{gang}.
\begin{lemma}\label{LemTTTheta}
We have a natural isomorphism $F_n \circ \TT_j\stackrel{\sim}{\Rightarrow} \Theta_j\circ F_n$, for all $j\in\mZ$.
\end{lemma}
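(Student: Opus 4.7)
The plan is to deduce the statement from the single natural isomorphism $F_n\circ \TT \stackrel{\sim}{\Rightarrow}\Theta \circ F_n$ by checking it intertwines the natural transformations that cut out the eigenspace decompositions on each side.

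First, since $F_n$ is a monoidal superfunctor with coherence isomorphism $c_{-,-}:(F_n-)\otimes(F_n-)\Rightarrow F_n(-\otimes -)$, and since $F_n(R(\Box))=V$, the family
$$\zeta_X\;:=\; c_{X,R(\Box)}\;:\; F_n(X)\otimes V\;\xrightarrow{\sim}\;F_n(X\otimes R(\Box))$$
defines an even natural isomorphism $\zeta:\Theta\circ F_n\stackrel{\sim}{\Rightarrow} F_n\circ \TT$. This gives the ``ungraded'' version $F_n\circ \TT\cong \Theta\circ F_n$ immediately.

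Next, recall that the decomposition $\TT=\bigoplus_{j\in\mZ}\TT_j$ is the generalized eigenspace decomposition for the even natural endotransformation $\xi$ of $\TT$ from Section~\ref{SecXi}, whose component at $X=(r,e)$ is right multiplication by the Jucys--Murphy element $x_{r+1}\in A_{r+1}$. Analogously, the decomposition $\Theta=\bigoplus_{j\in\mZ}\Theta_j$ from \cite[Proposition~4.1.9]{gang} is the generalized eigenspace decomposition for the natural endotransformation $\Omega$ of $\Theta$ from \cite[Lemma~4.1.4]{gang}. The heart of the proof is the compatibility
\begin{equation}\label{eq:compat}
\zeta\circ \Omega_{F_n}\;=\;F_n(\xi)\circ \zeta,
\end{equation}
as natural transformations $\Theta\circ F_n\Rightarrow F_n\circ \TT$. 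Granting this, the idempotents $\gamma_j^{(r+1)}\in A_{r+1}$ cutting out $\TT_j$ are polynomials in $x_{r+1}$, while the idempotents on the $\Fn$-side cutting out $\Theta_j$ are the corresponding polynomials in $\Omega$; hence $\zeta$ restricts to an even natural isomorphism $\zeta_j:\Theta_j\circ F_n\stackrel{\sim}{\Rightarrow} F_n\circ \TT_j$ for every $j\in\mZ$, which is the conclusion (after inverting).

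The main obstacle is therefore \eqref{eq:compat}, which by naturality of both $\Omega$ and $\xi$ reduces to the single identity
$$\zeta_{r}\circ \Omega_{V^{\otimes r}}\;=\;F_n(\xi_{(r,e_r^\ast)})\circ\zeta_{r}\qquad\text{on }V^{\otimes r}\otimes V,$$
for every $r\in\mN$. Since $\zeta_{r}$ is simply the coherence identification of $F_n(r+1)$ with $V^{\otimes r}\otimes V$, this amounts to the identity
$$\phi_n^{r+1}(x_{r+1})\;=\;\Omega_{V^{\otimes r}}\qquad\text{in }\End_{\pe(n)}(V^{\otimes (r+1)})^{\op}.$$
Both sides are expressions that measure the failure of the braiding of $V$ with itself to be trivial: the Jucys--Murphy element $x_{r+1}$ is, by its diagrammatic definition in \cite[Section~6]{PB1}, a sum over $i=1,\ldots,r$ of diagrams that insert a $\cup\circ\cap$ between strand $i$ and strand $r+1$ (up to signs and a normal crossing), while $\Omega_{V^{\otimes r}}$ from \cite[Lemma~4.1.4]{gang} is by construction the sum over $i=1,\ldots,r$ of the corresponding $\pe(n)$-intertwiners built from the odd pairing $\beta$ and its copairing acting on the $i$-th and last tensor factors. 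That these two prescriptions coincide after applying $\phi_n^{r+1}$ is a direct term-by-term verification, which is precisely the compatibility built into the definition of $F_n$ via the generators $\II,\XX,\cup,\cap$ and their images in $\Fn$; we carry it out by an explicit computation on a pure tensor.
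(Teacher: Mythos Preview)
Your proposal is correct and follows essentially the same route as the paper: establish $F_n\circ\TT\cong\Theta\circ F_n$ from monoidality, then show that the natural isomorphism intertwines $\xi$ with $\Omega$, so that the generalised eigenspace decompositions match. The paper simply cites \cite[8.5.3]{PB1} for the key identity $\phi_n^{r+1}(x_{r+1})=\Omega_{V^{\otimes r}}$, whereas you sketch why it holds; your description of $x_{r+1}$ as a sum of $\cup\circ\cap$ insertions is slightly imprecise (it is a signed sum of transpositions and cap-cup terms), but the verification is indeed a routine term-by-term check against the images of $\XX$, $\cup$, $\cap$ under $F_n$.
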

\begin{proof}
Since $F_n(R(\Box))\cong V$, we clearly have $F_n\circ \TT\cong \Theta\circ F_n$.
Consider $(r,e)\in \DA$. By \cite[8.5.3]{PB1}, the morphism $\xi_{(r,e)}=ex_{r+1}e$ of $(r,e)$ is mapped to $\Omega_{F_n (r,e)}$. The result then follows easily.
\end{proof}

We will use the above connection between the functors $\TT_j$ and $\Theta_j$ freely.

\begin{lemma}\label{Lem2core}
We have
$$F_n(R(\partial^n))\cong P(-\varepsilon_1-2\varepsilon_2-\cdots -n\varepsilon_n).$$
\end{lemma}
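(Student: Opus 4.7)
\textbf{Proof plan for Lemma \ref{Lem2core}.}
Since $\partial^n \in \PA^n$, Theorem \ref{ThmProj2} (via Theorem \ref{ThmProj}) guarantees that $F_n(R(\partial^n))$ is a nonzero indecomposable projective $\pe(n)$-module, hence of the form $P(\omega_0)$ for a unique $\omega_0 \in X_n^+$. The task thus reduces to identifying $\omega_0 = -\varepsilon_1 - 2\varepsilon_2 - \cdots - n\varepsilon_n$.

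My approach exploits the fact that $\partial^n$ is a 2-core: the cell module $W_r(\partial^n)$ for $r = |\partial^n| = n(n+1)/2$ admits no 2-hooks to remove and is therefore already simple. Consequently the primitive idempotent $e_{\partial^n} \in A_r$ corresponding to $L_r(\partial^n)$ can be chosen inside the subalgebra $\mk\mS_r \subset A_r$, and appears (up to conjugation) as a summand of the Young symmetrizer $f_{\partial^n}$, as in the proof of Lemma \ref{LemKernel}. The commuting diagram there together with classical Berele--Regev Schur--Weyl duality for $\mathfrak{gl}(n|n)$ identifies $\phi_n^r(f_{\partial^n})V^{\otimes r}$ with the simple $\mathfrak{gl}(n|n)$-covariant module of highest weight $\partial^n$, and $F_n(R(\partial^n)) = \phi_n^r(e_{\partial^n}) V^{\otimes r}$ sits inside it as a $\pe(n)$-direct summand.

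Inside this module I would construct an explicit $\pe(n)$-highest weight vector of weight $\omega$ by filling the staircase tableau of $\partial^n$ with odd basis vectors as follows: column $j$ (of length $n-j+1$) is filled top-to-bottom with $v_{\bar j}, v_{\overline{j+1}}, \ldots, v_{\bar n}$; the columns are then tensored in a fixed order and the full Young symmetrizer is applied. The resulting vector $v_\omega$ has weight
\[
-\sum_{j=1}^{n} \sum_{i=j}^{n} \varepsilon_i \;=\; -\sum_{i=1}^n i\,\varepsilon_i \;=\; \omega.
\]
Verification that $v_\omega$ is a $\pe(n)$-highest weight vector splits into two cases. For a positive even root of $\mathfrak{gl}(n) \subset \pe(n)_{\bar 0}$, the action replaces a $v_{\bar i}$ by $v_{\bar j}$ with $j < i$; by the staircase shape, $v_{\bar j}$ already appears in the relevant column, so the column anti-symmetrizer kills the result. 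For a positive odd root of $\pe(n)_{\bar 1}$, which converts an odd basis vector $v_{\bar i}$ into a linear combination of even basis vectors $v_k$, the resulting term has two $v_k$'s (or a $v_k$ and a missing $v_{\bar \ell}$) in positions forced to be antisymmetric by the Young symmetrizer, and hence vanishes.

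Finally, to conclude $\omega_0 = \omega$, I would argue that $F_n(R(\partial^n))$ is the \emph{only} projective indecomposable direct summand of $V^{\otimes n(n+1)/2}$: direct summands of $V^{\otimes r}$ correspond to $R(\lambda)$ with $|\lambda| \in \JJJ^0(r)$, and among these the projective ones require $\lambda \in \PA^n$ by Theorem \ref{ThmProj}, but the unique such $\lambda$ with $|\lambda| \le n(n+1)/2$ is $\partial^n$. Therefore the submodule generated by $v_\omega$ lies entirely in $F_n(R(\partial^n)) = P(\omega_0)$, and since a PIM has a unique simple top whose highest weight coincides with its label, the existence of a $\pe(n)$-highest weight vector of weight $\omega$ forces $\omega_0 = \omega$. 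I expect the main obstacle to be the verification that $v_\omega$ is annihilated by the positive odd root vectors of $\pe(n)$: this requires a hands-on analysis of the $\pe(n)_{\bar 1}$-action on the staircase tensor product and careful use of the supersymmetry of the defining bilinear form $\beta$.
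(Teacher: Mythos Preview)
Your approach is genuinely different from the paper's and is an interesting idea, but there is a real gap in the final logical step.

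\textbf{Comparison with the paper.} The paper's proof is entirely combinatorial and functorial: it uses the compatibility $F_n\circ\TT_j\cong\Theta_j\circ F_n$ (Lemma~\ref{LemTTTheta}), reads off from Section~\ref{SecTL} exactly for which $q$ the object $\TT_q(R(\partial^n))$ is nonzero, and then matches this against the combinatorics of $\Theta_q(P(\omega))$ from \cite[Section~7.2]{gang} to determine $c_\omega$ uniquely. No explicit vectors in $V^{\otimes r}$ are ever written down. Your approach, by contrast, tries to build an explicit $\pe(n)$-singular vector of the desired weight inside the Schur module. If it worked, it would be more concrete and independent of the translation-functor combinatorics in \cite{gang}.

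\textbf{The gap.} Even granting that your vector $v_\omega$ is annihilated by the whole positive part of $\pe(n)$ (which you correctly flag as nontrivial for the odd root vectors), the conclusion $\omega_0=\omega$ does not follow from the argument given. Two separate issues arise.
First, you construct $v_\omega$ inside $\phi_n^r(f_{\partial^n})V^{\otimes r}$, but $F_n(R(\partial^n))=\phi_n^r(e_{\partial^n})V^{\otimes r}$ is in general only a proper $\pe(n)$-summand of this: the claim that $e_{\partial^n}$ may be chosen inside $\mk\mS_r$ (equivalently, that $f_{\partial^n}$ is already primitive in $A_r$) is not justified by the simplicity of $W_r(\partial^n)$ alone, and your step~5 about uniqueness of the projective summand does not force $v_\omega$ to land in that summand rather than in a non-projective one.
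Second, and more fundamentally, even if $v_\omega\in P(\omega_0)$, a projective indecomposable module typically has many $\pe(n)$-highest weight vectors, one for each simple subquotient that is not hit from above; only the one that survives in the top has weight $\omega_0$. Your sentence ``a PIM has a unique simple top whose highest weight coincides with its label'' is true, but it does not say that every highest weight vector in $P(\omega_0)$ has weight $\omega_0$. To finish your argument you would need to show that $v_\omega$ is \emph{not} in the radical of $P(\omega_0)$, i.e.\ that it actually generates the module; nothing in the proposal addresses this.

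A possible repair would be to argue that $\omega$ is the \emph{maximal} $\pe(n)$-weight occurring in $\phi_n^r(f_{\partial^n})V^{\otimes r}$ (in the relevant partial order), which would simultaneously prove singularity of $v_\omega$ and place it in the top. But this requires a separate computation of the $\pe(n)$-weight structure of the $\mathfrak{gl}(n|n)$-Schur module, and is not obviously easier than the paper's route.
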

\begin{proof}
Assume $P(\omega)\cong F_n(R(\partial^n))$, for $\omega=\sum_{i=1}^n\omega_i\varepsilon_i$. We have
$$\TT_n(R(\partial^n))\not=0,\qquad\mbox{and}\qquad \TT_k(R(\partial^n))=0\quad\mbox{for all $k>n$}.$$
By \cite[Section~7.2]{gang}, this means that $\max c_{\omega}=n-2$, so $\omega_1=-1$. 
Similarly,
$$\TT_{-n}(R(\partial^n))\not=0,\qquad\mbox{and}\qquad \TT_{-k}(R(\partial^n))=0\quad\mbox{for all $k>n$},$$
implies that  $\omega_n=\min c_{\omega}=-n$.

Now assume that we would have $\omega_i=\omega_{i+1}$ for $1\le i<n$. This means we would have two integers in~$c_{\omega}$ which differ by one. Since $\max c_{\omega}-\min c_{\omega}=2n-2$, this would imply that there exists $a\in\mZ$ such that $\{a,a+1\}\subset c_{\omega}$, but $a+2\not\in c_{\omega}$. By \cite[Lemmata 7.2.1(1)]{gang}, $\TT_{a+3}R(\partial^n)$ is non-zero. By \cite[Lemma~7.2.3(1)]{gang}, $\TT_{a+2}R(\partial^n)$ is non-zero. Since there is no $a\in\mZ$ for which both these statements are true, the only remaining option for $\omega$ is the one proposed in the lemma.
\end{proof}

\begin{proof}[Proof of Theorem~\ref{ThmEl}]
The claim is true for $\lambda=\partial^n$, by Lemma~\ref{Lem2core}. Now take an arbitrary $\lambda\in\Par^n$, with $k_0$ as in \ref{k0}. We `construct' $\lambda$ from $\partial^n$ in two steps. In step (a) we add $\lambda_1-n$ boxes in row 1, then $\lambda_2-n+1$ boxes in row 2, and so on, until we add $\lambda_{k_0}-n+k_0-1$ boxes in row $k_0$. In step (b) we similarly add $\lambda_1^t-n$ boxes in column 1, then boxes in column 2, until we conclude by adding sufficiently many boxes in column $n-k_0$.

Let $\mu\in\Par^n$ be a partition obtained while constructing $\lambda$ as above and let $\mu'\in\Par^n$ be the partition obtained by adding one more box to $\mu$ along the procedure towards $\lambda$. Then we have $\partial^n \subset \mu \subset \mu' \subset \lambda$. Assume that the claim holds for $\mu$.

If $\mu'$ is obtained from $\mu$ by adding a box as in step (a), then, by construction, the row in which a box is added to obtain $\mu'$ contains a $\diamond$. The change $d_{\mu}\mapsto d_{\mu'}$ is thus as in Lemma \ref{Lemaddq}(i). Comparing this with \cite[Lemma 7.2.1(1)]{gang} gives the claim for $\mu'$.

If $\mu'$ is obtained from $\mu$ by a construction step of type (b), then both cases in Lemma \ref{Lemaddq} can occur. In case (i), comparing again with \cite[Lemma 7.2.1(1)]{gang} gives the claim for $\mu'$. In case (ii), comparing with \cite[Lemma 7.2.3(2)]{gang} gives the claim.

Schematically, we can represent the relation between the local combinatorics of marked partitions of \ref{mark} and weight diagrams of \cite[Section~5.1]{gang}, under the above construction, as follows
$$\young(\leeg ?,\diamond)\;\quad\leftrightarrow\;\quad \stackrel{\bullet}{i-2}\;\;\stackrel{\circ}{i-1}\;\;\;\;\stackrel{?}{i}\,,\qquad \young(\leeg \diamond,\leeg)\;\quad\leftrightarrow\;\quad \stackrel{\circ}{i-2}\;\;\stackrel{\circ}{i-1}\;\;\;\;\stackrel{\bullet}{i}.$$
Here the content of the addable box is $i$, and $\young(?)$ is a box which may or may not contain $\diamond$.
\end{proof}


\section{Categorification of the representation~$\Xi$}
\label{SecCateg}
\subsection{Categorical action of~$\TL_\infty(0)$ on~$\DA$}
In this section we upgrade the na\"ive categorification~$(\DA,\Psi,\{\TT_i\,|\,i\in\mZ\})$ from Theorem~\ref{ThmDecat} to a 
 `weak categorification' of the~$\TL_{\infty}(0)$-representation~$\Xi$, in the sense of \cite[Definition~2.7]{Maz}. 
\begin{thm}\label{ThmCat}
We have natural isomorphisms of functors, for all $i,j\in\mZ$ with~$|i-j|>1$,
$$\TT_i^2\;\stackrel{\sim}{\Rightarrow}\; 0,\;\TT_i\TT_j\;\stackrel{\sim}{\Rightarrow}\; \TT_j\TT_i\;\;\mbox{and}\;\;\; \TT_i\TT_{i\pm 1}\TT_i\;\stackrel{\sim}{\Rightarrow}\; \TT_i.$$
The second natural isomorphism is even, the third one odd.
\end{thm}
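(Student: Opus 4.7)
The three isomorphisms have quite different levels of difficulty, so I split the plan accordingly.

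The first isomorphism $\TT_i^2\stackrel{\sim}{\Rightarrow}0$ is essentially formal. By Theorem~\ref{ThmDecat} the class $[\TT_i^2(R(\lambda))]=\Psi(T_i^2\lambda)$ vanishes in $[\DA]_{\oplus}$ for every $\lambda\in\PA$. Since $\{[R(\mu)]\mid\mu\in\PA\}$ is a $\mZ$-basis of $[\DA]_{\oplus}$ and $\DA$ is Krull--Schmidt, the multiplicity of each indecomposable in $\TT_i^2(R(\lambda))$ vanishes, whence $\TT_i^2(R(\lambda))=0$. By additivity $\TT_i^2$ is identically the zero functor, so the unique even natural transformation $\TT_i^2\Rightarrow 0$ is the required isomorphism.

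For the commutation $\TT_i\TT_j\stackrel{\sim}{\Rightarrow}\TT_j\TT_i$ with $|i-j|>1$ I would use the symmetric braiding of $\DA$. Explicitly, $\beta_X:=\id_X\otimes B_{R(\Box),R(\Box)}$ is an even natural automorphism of $\TT^2$; on $X=(r,e)$ it corresponds to right-multiplication by the crossing $s_{r+1}\in A_{r+2}$. Decomposing $\TT^2=\bigoplus_{j,k}\TT_j\TT_k$ via the Jucys--Murphy idempotents lets us regard $\beta$ as a matrix of natural transformations; it then suffices to show that its component $\TT_i\TT_j\Rightarrow\TT_j\TT_i$ is an isomorphism when $|i-j|>1$. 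The crucial input is the identity
\[
s_{r+1}\,\gamma_i^{(r+1)}\gamma_j^{(r+2)}\;=\;\gamma_j^{(r+1)}\gamma_i^{(r+2)}\,s_{r+1}\qquad\text{for }|i-j|>1,
\]
which follows from the commutation of $s_{r+1}$ with $x_{r+1}$ and $x_{r+2}$ recorded in \cite[Section~6]{PB1}: the hypothesis $|i-j|>1$ is precisely what forces the ``exceptional'' contribution coming from the cup-cap element $\epsilon_{r+1}$ to vanish between the generalised $i$- and $j$-eigenspaces. Invertibility of $\beta$ then transfers to invertibility of the restricted component.

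For the Temperley--Lieb relation $\TT_i\TT_{i\pm 1}\TT_i\stackrel{\sim}{\Rightarrow}\TT_i$ I would build the natural transformation from the odd morphisms $\cup\in\Hom_\cA(0,2)$ and $\cap\in\Hom_\cA(2,0)$. The assignments $u_X:=\id_X\otimes(\II\otimes\cup)$ and $c_X:=\id_X\otimes(\II\otimes\cap)$ define odd natural transformations $\TT\Rightarrow\TT^3$ and $\TT^3\Rightarrow\TT$; post- and pre-composing with the relevant Jucys--Murphy idempotents cuts them down to odd natural transformations $u^i:\TT_i\Rightarrow\TT_i\TT_{i\pm 1}\TT_i$ and $c^i:\TT_i\TT_{i\pm 1}\TT_i\Rightarrow\TT_i$. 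By the decategorified identity $T_iT_{i\pm 1}T_i=T_i$ from Theorem~\ref{ThmDecat}, together with Krull--Schmidt, on each $R(\lambda)$ the two objects $\TT_i(R(\lambda))$ and $\TT_i\TT_{i\pm 1}\TT_i(R(\lambda))$ are either both zero or both isomorphic to a single $R(\mu)$. Since $\End_{\DA}(R(\mu))$ is local, it suffices to verify that the composite $c^i\circ u^i$ evaluated at $R(\mu)$ does not lie in the Jacobson radical. This reduces to an explicit (but non-trivial) Brauer-algebra computation involving cups, caps and Jucys--Murphy idempotents; as a shortcut one can instead transport the question through Lemma~\ref{LemTTTheta} and compare with the corresponding non-zero odd natural transformations for the translation functors $\Theta_i$ constructed in~\cite{gang}.

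The main obstacle is the third relation: the first is automatic from Krull--Schmidt, the second reduces to a single commutation identity in $A_{r+2}$, but the third requires genuinely constructing an \emph{odd} natural transformation and verifying its non-degeneracy at each indecomposable. This is where the bulk of the Brauer-algebra book-keeping, or alternatively the comparison with $\Fn$ through $F_n$, is concentrated.
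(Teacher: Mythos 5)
Your first isomorphism is handled correctly and exactly as in the paper: $T_i^2=0$ plus Corollary~\ref{CorSplit} forces $\TT_i^2(R(\lambda))=0$ for every $\lambda$, hence $\TT_i^2=0$ as a functor. The problems are in the other two relations.

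For $\TT_i\TT_j\Rightarrow\TT_j\TT_i$ your ``crucial input''
$s_{r+1}\,\gamma_i^{(r+1)}\gamma_j^{(r+2)}=\gamma_j^{(r+1)}\gamma_i^{(r+2)}\,s_{r+1}$
is false. The crossing does not commute with the Jucys--Murphy elements up to a swap: the relation (see \cite[Lemma~6.3.1]{PB1} or \cite[Definition~3.1]{ChenPeng}) has \emph{two} correction terms, one involving $\epsilon_{r+1}$ and one equal to a constant (the degenerate affine Hecke-type term $\pm 1$). The $\epsilon_{r+1}$ contribution indeed dies between the $i$- and $j$-eigenspaces when $j\neq i+1$, but the constant term survives for every $i,j$, so the bare crossing does not intertwine the projectors and your $\beta$ is not block-diagonal with respect to the decomposition $\TT\TT=\bigoplus\TT_k\TT_l$; consequently the step ``invertibility of $\beta$ transfers to the restricted component'' has no justification. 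The paper repairs exactly this by replacing $s_{r+1}$ with the intertwiner $\psi_{r+2}=s_{r+1}(x_{r+1}-x_{r+2})+1$ (Lemma~\ref{Lempsi}), which does intertwine the projectors modulo the $\epsilon$-term, and then uses $\psi_{r+2}^2=1-(x_{r+1}-x_{r+2})^2$: on the $(i,j)$-generalised eigenspace this is $(1-(i-j)^2)$ plus a nilpotent, so the hypothesis $|i-j|>1$ is needed a \emph{second} time, to make $1-(i-j)^2\neq 0$. Your proposal uses the hypothesis only once (for the $\epsilon$-term) and therefore cannot be correct as it stands. Finally, even after producing maps $\alpha,\beta$ with $\beta\circ\alpha$ invertible, one still needs the object-level isomorphism $\TT_i\TT_jX\cong\TT_j\TT_iX$ from Theorem~\ref{ThmDecat} to conclude that $\alpha$ and $\beta$ are separately invertible; this step is absent from your argument.

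For $\TT_i\TT_{i\pm1}\TT_i\Rightarrow\TT_i$ your construction (cut the odd transformations built from $\cup$ and $\cap$ down by the idempotents $\gamma^{(\cdot)}_k$) is the same as the paper's, but the decisive point is precisely the computation you defer: one must show that $\cap$ and $\cup$ shift the eigenvalue by exactly one, i.e. $(\II^{\otimes k}\otimes\cap)\gamma^{(k+2)}_{i}=(\II^{\otimes k}\otimes\cap)\gamma^{(k+1)}_{i+1}$ (the paper's Lemma~\ref{LemCat3}), which is what makes the middle factor $\TT_{i\pm1}$ rather than anything else and yields $c^i\circ u^i=\pm 1_{\TT_i}$ on the nose via $(\cap\otimes\II)\circ(\II\otimes\cup)=\II$. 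Without this the locality argument has nothing to evaluate. Your proposed shortcut through $F_n$ is not a free pass either: $F_n$ has a nontrivial Ob-kernel, so for each $\lambda$ you must choose $n$ with $\TT_iR(\lambda)\notin\ideal_{n+1}$, and you must verify that your specific cup/cap transformations are matched by $F_n$ with the corresponding ones for $\Theta_{i}$ in \cite{gang} (Lemma~\ref{LemTTTheta} only identifies the functors, not these transformations). So the third relation is a plan rather than a proof, and the second contains a genuine error.
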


\begin{rem}\label{RemCat}
Theorem~\ref{ThmCat} implies also that we get a weak categorification of the~$\TL_\infty(0)$-representation~$\Xi^k$ in \eqref{eqsections} on the cell $\ideal_k/\ideal_{k+1}$.
\end{rem}
To stress the similarity with the notion of $\fg$-categorification for a Kac-Moody algebra $\fg$ in \cite[Definition~5.29]{Rouq}, we add the following proposition. For this we use the affine periplectic Brauer algebra $\widehat{P}_d^-$, with generators $s_i,\epsilon_i$, for $1\le i<d$, and $y_j$, for $1\le j\le d$, and relations as in \cite[Definition~3.1]{ChenPeng}.
\begin{prop}\label{PropAff}
We have even natural transformations $\sigma,\tau:\TT\TT\Rightarrow \TT\TT$, such that for any $d\in\mN$, we get an algebra morphism
$$\widehat{P}_d^-\;\to\;\Nat(\TT^d,\TT^d)_{\oa},\qquad\mbox{given by }$$
$$s_i\mapsto \TT^{d-i-1}(\sigma_{\TT^{i-1}})\;\,\mbox{ and }\;\,\epsilon_i\mapsto \TT^{d-i-1}(\tau_{\TT^{i-1}}),\quad\mbox{for $1\le i<d$},$$
$$y_j\mapsto \TT^{d-j}(\xi_{\TT^{j-1}}),\quad\mbox{for $1\le j\le d$,}$$
with $\xi:\TT\Rightarrow \TT$ as in  Section~\ref{SecXi}.
\end{prop}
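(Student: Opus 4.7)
The plan is to realise $\sigma$ and $\tau$ via the generators of $A_2$ described in \ref{anti}. Concretely, set
$$\sigma_X \;:=\; \id_X \otimes \XX \qquad \text{and} \qquad \tau_X \;:=\; \id_X \otimes (\cup\circ \cap)$$
for every $X \in \Ob \DA$, where $\XX, \cup\circ\cap \in A_2$.

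First I would verify that $\sigma$ and $\tau$ are even natural transformations $\TT\TT \Rightarrow \TT\TT$. Naturality is immediate from the bifunctoriality of $-\otimes-$: for any morphism $f: X \to Y$, both $\sigma_Y \circ \TT\TT(f)$ and $\TT\TT(f) \circ \sigma_X$ reduce to $f \otimes \XX$, and similarly for $\tau$. Evenness is automatic since $\XX$ and $\cup\circ\cap$ lie in $A_2$, which is ungraded by \ref{defAr}.

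Next I would unpack the natural transformations appearing in the proposed map by evaluating at $X = (r, e)$. Under the identification $\End_{\DA}(\TT^d X) = (e\otimes \II^{\otimes d}) A_{r+d} (e\otimes \II^{\otimes d})$, direct inspection from \eqref{eqDefT} and the definition of $\xi$ in \ref{SecXi} yields
\begin{align*}
\bigl(\TT^{d-i-1}(\sigma_{\TT^{i-1}})\bigr)_X &\;=\; \id_X \otimes \II^{\otimes i-1} \otimes \XX \otimes \II^{\otimes d-i-1} \;=\; (e\otimes \II^{\otimes d}) \, s_{r+i}, \\
\bigl(\TT^{d-i-1}(\tau_{\TT^{i-1}})\bigr)_X &\;=\; (e\otimes \II^{\otimes d}) \, \epsilon_{r+i}, \\
\bigl(\TT^{d-j}(\xi_{\TT^{j-1}})\bigr)_X &\;=\; (e\otimes \II^{\otimes d}) \, x_{r+j},
\end{align*}
where on the right $s_{r+i}, \epsilon_{r+i}, x_{r+j}$ are the standard elements of $A_{r+d}$; note that $x_{r+j}$ commutes with $A_{r+j-1}$ and in particular with $e$, so the idempotent $e\otimes \II^{\otimes d}$ can be freely inserted on either side.

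Finally, checking that the assignment is a well-defined algebra morphism reduces, by naturality and the previous computation, to verifying that in each $A_{r+d}$ the collection $\{s_{r+i}, \epsilon_{r+i}, x_{r+j}\}_{1\le i<d,\,1\le j\le d}$ satisfies the defining relations of $\widehat{P}_d^-$. The relations not involving the $y_j$ are the defining relations of the periplectic Brauer algebra $A_d$ and hold by the embedding \eqref{embedrs}; the remaining affine relations are precisely the Jucys--Murphy relations established in \cite[Section~6]{PB1} and further analysed in \cite[Section~2]{PB2}. The main obstacle is matching these relations against the presentation of $\widehat{P}_d^-$ given in \cite[Definition~3.1]{ChenPeng}, which is a bookkeeping exercise concerning sign conventions and normalisations rather than a conceptual difficulty.
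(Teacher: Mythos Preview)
Your proposal is correct and follows essentially the same approach as the paper. The paper defines $\sigma$ identically and writes $\tau$ as the composition $\eta\circ\varepsilon$ of the odd natural transformations from~\ref{Defepsilon}, which unwinds to your formula $\id_X\otimes(\cup\circ\cap)$; it then reduces the verification of the relations to the objects $k\in\Ob\cA$ (rather than arbitrary $(r,e)$) and appeals, as you do, to consistency between \cite[Definition~3.1]{ChenPeng} and the Jucys--Murphy relations of \cite[Section~6.3]{PB1}.
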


\subsubsection{}\label{Defepsilon}We start the proofs of the above results by defining two families of odd morphisms. For~$X=(r,e)\in\DA$, we set, using \eqref{MorDeli},
$$\varepsilon_X:\TT\TT(X)\to X,\quad \varepsilon_X=(e\otimes\cap)\in e\Hom_{\cA}(r+2,r)(e\otimes\II\otimes\II),\quad\mbox{and}$$
$$\eta_X:X\to \TT\TT(X),\quad \eta_X=(e\otimes\cup)\in (e\otimes\II\otimes\II)\Hom_{\cA}(r,r+2)e.$$
These extend easily to arbitrary objects~$X\in\Ob\DA$.
\begin{lemma}\label{LemCat1}
The families~$\{\varepsilon_X\,|\,X\in\Ob\DA\}$ and~$\{\eta_X\,|\, X\in\Ob\DA\}$ define odd natural transformations $\varepsilon:\TT\TT\Rightarrow \Id$ and~$\eta:\Id\Rightarrow\TT\TT$.
\end{lemma}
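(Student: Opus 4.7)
The plan is to reduce naturality to a single identity in the strict monoidal supercategory $\cA$, via the super interchange law~\eqref{superComp}.

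First, I would confirm the morphisms are well-defined. For $X=(r,e)$, the element $e\otimes \cap$ lives in $\Hom_{\cA}(r+2,r)$, and since $(r+2-r)/2=1$, it is purely odd by~\ref{SSBrdiag}. Using the interchange law and the fact that $e$ is an even idempotent with $e^2=e$, one checks
$$e\circ(e\otimes\cap)\circ(e\otimes\II\otimes\II)\;=\;e\otimes\cap,$$
so $\varepsilon_X\in\Hom_{\DA}(\TT\TT X,X)$ via~\eqref{MorDeli}. Symmetrically for $\eta_X$. Extension to arbitrary objects of $\overline{\cA}$ (and hence $\DA$) is by taking block-diagonal matrices.

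Next, the heart of the argument is the following claim: for any homogeneous $a\in\Hom_{\cA}(r,s)$,
\begin{equation}\label{key}
a\circ (\II^{\otimes r}\otimes\cap)\;=\;(-1)^{|a|}(\II^{\otimes s}\otimes\cap)\circ(a\otimes\II^{\otimes 2}).
\end{equation}
Indeed, writing $\II^{\otimes r}\otimes\cap=(a\otimes\id_0)\circ(\II^{\otimes r}\otimes\cap)$ composition-wise and applying~\eqref{superComp} with $g=\id_0$ (even) gives the left-hand side as $a\otimes\cap$. Applying~\eqref{superComp} to the right-hand side with $g=\cap$ (odd) and $h=a$ produces the extra factor $(-1)^{|a|}$, again yielding $a\otimes\cap$. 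So \eqref{key} holds.

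To conclude naturality of $\varepsilon$, take a homogeneous morphism $a:X\to Y$ in $\DA$ with $X=(r,e)$, $Y=(s,f)$, so $a=fae\in\Hom_{\cA}(r,s)$. Writing $e\otimes\cap=e\circ(\II^{\otimes r}\otimes\cap)$ and $f\otimes\cap=f\circ(\II^{\otimes s}\otimes\cap)$, the identities $ae=a=fa$ together with~\eqref{key} give
$$a\circ\varepsilon_X\;=\;a\circ(\II^{\otimes r}\otimes\cap)\;=\;(-1)^{|a|}(\II^{\otimes s}\otimes\cap)\circ(a\otimes\II^{\otimes 2})\;=\;(-1)^{|a|}\varepsilon_Y\circ\TT\TT(a),$$
which is the required compatibility for an odd natural transformation. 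The argument for $\eta$ is entirely symmetric, replacing $\cap$ by $\cup\in\Hom_{\cA}(0,2)$ and reversing the direction.

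The only potential obstacle is bookkeeping of signs and the two composition conventions (composition in $\cA$ versus tensor of morphisms), but both are controlled by the single super interchange rule~\eqref{superComp}; once~\eqref{key} is established, the rest is formal.
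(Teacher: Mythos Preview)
Your proof is correct and follows essentially the same approach as the paper: both reduce the naturality square to the identity $a\circ(e\otimes\cap)=(-1)^{|a|}(f\otimes\cap)\circ(a\otimes\II\otimes\II)$ and verify it via the super interchange law~\eqref{superComp} together with $ae=a=fa$. Your version simply unpacks more of the bookkeeping (well-definedness, oddness, the intermediate identity~\eqref{key}); note the small slip where you write ``$\II^{\otimes r}\otimes\cap=(a\otimes\id_0)\circ(\II^{\otimes r}\otimes\cap)$'' --- you mean to rewrite $a\circ(\II^{\otimes r}\otimes\cap)$ that way, not $\II^{\otimes r}\otimes\cap$ itself.
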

\begin{proof}We prove the claim for~$\varepsilon$, the case~$\eta$ is proved identically.
Take idempotents~$e\in A_r$ and~$f\in A_s$ and set~$X=(r,e)$ and~$Y=(s,f)$. Consider $a\in f\Hom_{\cA}(r,s)e=\Hom_{\DA}((r,e),(s,f))$. By definition, we need to show that
$$a\circ (e\otimes\cap)\;=\; (-1)^{|a|}(f\otimes \cap)\circ (a\otimes\II\otimes\II).$$
This equation holds true by equation~\eqref{superComp} and $ae=a=fa$.
\end{proof}

\begin{lemma}\label{LemCat2}
We have equalities of natural transformations
$$\TT(\varepsilon)\circ\eta_{\TT}=1_{\TT}\qquad\mbox{and}\qquad \varepsilon_{\TT}\circ\TT(\eta) =-1_{\TT}.$$
\end{lemma}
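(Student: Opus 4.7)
The plan is to unwind the definitions at an arbitrary object $X=(r,e)\in\DA$ and reduce both identities to the two ``zigzag'' relations in $\cA$ between $\cup$ and $\cap$.

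First I would compute each side explicitly. For $X=(r,e)$, using \eqref{eqDefT} and the definitions in \ref{Defepsilon}, I get
\begin{align*}
\bigl(\TT(\varepsilon)\circ\eta_{\TT}\bigr)_X
&\;=\;\TT(\varepsilon_X)\circ\eta_{\TT(X)}\;=\;(e\otimes\cap\otimes\II)\circ(e\otimes\II\otimes\cup),\\
\bigl(\varepsilon_{\TT}\circ\TT(\eta)\bigr)_X
&\;=\;\varepsilon_{\TT(X)}\circ\TT(\eta_X)\;=\;(e\otimes\II\otimes\cap)\circ(e\otimes\cup\otimes\II),
\end{align*}
while the identity of $\TT(X)$ is $e\otimes\II$. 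Since $e$ is an idempotent and since the monoidal product in $\cA$ distributes over composition (via \eqref{superComp}, with no signs incurred because $e$ and $\II$ are even), in both cases the factor $e$ can be pulled out and the claim reduces to showing in $\cA$
\[
(\II\otimes\cap\otimes\II)\circ(\II\otimes\II\otimes\cup)\;=\;\II\otimes\II
\qquad\text{and}\qquad
(\II\otimes\II\otimes\cap)\circ(\II\otimes\cup\otimes\II)\;=\;-\,\II\otimes\II,
\]
or equivalently (after tensoring on the left with $e_r^\ast$), the pair of zigzag identities
\[
(\cap\otimes\II)\circ(\II\otimes\cup)\;=\;\II,\qquad
(\II\otimes\cap)\circ(\cup\otimes\II)\;=\;-\,\II
\]
in $\End_\cA(1)$.

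Both of these are defining relations of the periplectic Brauer category, built into the sign conventions of \cite[Theorem~3.2.1]{Kujawa}: drawing the corresponding Brauer diagrams, each composition yields, as an unsigned diagram, the straight propagating line, so the only thing to check is the sign coming from the marking rule. One zigzag is straight (no crossing of odd strands in the marked diagram) while the other picks up a single sign from the asymmetric marking convention of the periplectic cup/cap, producing the $-\II$.

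The only real content is therefore the sign computation in $\cA$, which I expect to be the main (and only) obstacle in the sense that it requires carefully invoking the sign rule from \cite{Kujawa} (the analogue for $\cA$ of \cite[PB1]{PB1}); everything else is bookkeeping. Once both zigzag identities are established in $\End_\cA(1)$, tensoring on the left with $e$ and then recognising $e\otimes\II$ as the identity of $\TT(X)$ in $\DA$ finishes the proof, as the identities are natural in $X$ by Lemma~\ref{LemCat1}.
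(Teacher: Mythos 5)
Your proposal is correct and follows essentially the same route as the paper: unwind the definitions at $X=(r,e)$ to reduce both identities to the two zigzag relations $(\cap\otimes\II)\circ(\II\otimes\cup)=\II$ and $(\II\otimes\cap)\circ(\cup\otimes\II)=-\II$, which the paper likewise takes directly from \cite[Theorem~3.2.1]{Kujawa} rather than recomputing the sign from the marking rule.
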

\begin{proof}
By \cite[Theorem~3.2.1]{Kujawa}, we have
$$(\cap\otimes\II)\circ (\II\otimes\cup)=\II\quad\mbox{and }\quad(\II\otimes \cap)\circ(\cup\otimes\II)=-\II.$$
Set~$X=(r,e)\in\Ob\DA$. By \ref{Defepsilon}, equation~\eqref{eqDefT} and the above formula, we have
$$\TT(\varepsilon_X)\circ \eta_{\TT X}\;=\; (e\otimes \cap\otimes\II)\circ (e\otimes\II\otimes\cup)\;=\;(e\otimes\II)\;=\; 1_{\TT X}.$$
The second relation follows identically.
\end{proof}

\subsubsection{}\label{Defiotapi} We introduce natural transformations $\iota^i:\TT_i\Rightarrow \TT$ and~$\pi^i:\TT\Rightarrow \TT_i$. For~$X=(r,e)$, the morphism $\iota^i_X$, resp. $\pi^i_X$, is to be identified with
$$(e\otimes\II)\gamma^{(r+1)}_i\;=\;\gamma^{(r+1)}_i(e\otimes\II)\gamma^{(r+1)}_i\;=\;\gamma^{(r+1)}_i(e\otimes\II),$$
which can be interpreted inside $\Hom_{\DA}(\TT_iX,\TT X)$, resp. $\Hom_{\DA}(\TT X,\TT_i X)$, as in \eqref{MorDeli}. Furthermore, $(\iota^i\circ\pi^i)_X\in \Hom_{\DA}(\TT X,\TT  X)$ and~$1_{\TT_i X}=(\pi^i\circ\iota^i)_X\in \Hom_{\DA}(\TT_i X,\TT_i X)$ can also be interpreted as the above element. All this extends to arbitrary $X\in\Ob\DA$.

\begin{lemma}\label{LemCat3}
We have equalities of natural transformations, for all $i\in\mZ$,
$$\varepsilon\circ \iota^{i}_{\TT}\;=\; \varepsilon\circ (\iota^{i}\star (\iota^{i+1}\circ\pi^{i+1})),\qquad\mbox{for}\quad \TT_{i}\TT\Rightarrow\Id,$$
$$ \varepsilon\circ \TT(\iota^{i})\;=\;\varepsilon \circ ((\iota^{i-1}\circ\pi^{i-1})\star \iota^{i}),\qquad\mbox{for}\quad \TT\TT_{i}\Rightarrow\Id,$$
$$\TT(\pi^i)\circ\eta\;=\; ((\iota^{i+1}\circ\pi^{i+1})\star \pi^{i})\circ \eta,\qquad\mbox{for}\quad\Id\Rightarrow\TT\TT_i,$$
$$ \pi^{i}_{\TT}\circ\eta\;=\;(\pi^{i}\star(\iota^{i-1}\circ\pi^{i-1}))\circ\eta,\qquad\mbox{for}\quad\Id\Rightarrow\TT_{i}\TT.$$
\end{lemma}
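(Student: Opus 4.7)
Fix $X=(r,e)\in\Ob\DA$. The plan is to evaluate both sides of each of the four identities at $X$, using the explicit formulas in~\ref{Defepsilon} and~\ref{Defiotapi}, expanding the Godement products via $\xi_1\star\xi_2=G_1(\xi_2)\circ(\xi_1)_{F_2}$, and repeatedly invoking the commutativity in $A_{r+2}$ of $e\otimes\II\otimes\II$ with $\gamma^{(r+1)}_j\otimes\II$ and $\gamma^{(r+2)}_k$, and of these with each other: all such commutations follow from~\ref{JMel} because $e\in A_r\subset A_{r+1}$. After simplification, the four identities reduce respectively to the following identities in $\Hom_{\cA}(r+2,r)$ and $\Hom_{\cA}(r,r+2)$:
\begin{align*}
(e\otimes\cap)\gamma^{(r+2)}_i &= (e\otimes\cap)\gamma^{(r+2)}_i(\gamma^{(r+1)}_{i+1}\otimes\II), \\
(e\otimes\cap)(\gamma^{(r+1)}_i\otimes\II) &= (e\otimes\cap)(\gamma^{(r+1)}_i\otimes\II)\gamma^{(r+2)}_{i-1}, \\
(\gamma^{(r+1)}_i\otimes\II)(e\otimes\cup) &= \gamma^{(r+2)}_{i+1}(\gamma^{(r+1)}_i\otimes\II)(e\otimes\cup), \\
\gamma^{(r+2)}_i(e\otimes\cup) &= (\gamma^{(r+1)}_{i-1}\otimes\II)\gamma^{(r+2)}_i(e\otimes\cup).
\end{align*}
Each of these expresses that a morphism of $\cA$ built from $\cap$ or $\cup$ and a single Jucys--Murphy idempotent factors through a specific generalized eigenspace of the adjacent Jucys--Murphy element; the extension to arbitrary objects of $\DA$ is then automatic by additivity.

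I claim that all four displayed identities follow from the single pair of \emph{cap/cup--JM relations}
\begin{equation*}
(\II^{\otimes r}\otimes\cap)(x_{r+1}\otimes\II)\,=\,(\II^{\otimes r}\otimes\cap)(x_{r+2}+1),\qquad (x_{r+1}\otimes\II)(\II^{\otimes r}\otimes\cup)\,=\,(x_{r+2}+1)(\II^{\otimes r}\otimes\cup)
\end{equation*}
in $\cA$. Granted these, the first of the four identities follows by multiplying the first cap--JM relation on the left by $(e\otimes\II\otimes\II)$, which gives $(e\otimes\cap)(x_{r+1}\otimes\II)=(e\otimes\cap)(x_{r+2}+1)$, then multiplying on the right by $\gamma^{(r+2)}_i$ and exploiting that $(x_{r+1}\otimes\II)$ commutes with $\gamma^{(r+2)}_i$ while $(x_{r+2}-i)$ is nilpotent on the image of $\gamma^{(r+2)}_i$; a straightforward iteration shows that $(e\otimes\cap)\gamma^{(r+2)}_i$ is annihilated on the right by a sufficiently large power of $(x_{r+1}\otimes\II)-(i+1)$, so that expanding $1=\sum_j\gamma^{(r+1)}_j\otimes\II$ and separating by generalized eigenspaces of $x_{r+1}\otimes\II$ forces only the $j=i+1$ summand to survive. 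The three remaining identities follow from the same recipe, interchanging the roles of $\cap\leftrightarrow\cup$, and of $x_{r+1}\leftrightarrow x_{r+2}$.

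The main (and only substantive) step is the verification of the two cap/cup--JM relations. This is the periplectic analogue of the classical statement that, in a Brauer-type algebra, the Jucys--Murphy elements attached to two strands being capped off or opened up differ by a fixed scalar; the particular shift~$+1$ here is forced by, and consistent with, the content combinatorics of~\eqref{eqbq}, since closing strands $r+1$ and $r+2$ via $\cap$ pairs two boxes of contents that differ by exactly one. I would prove it by a direct calculation in the generators $\II,\XX,\cup,\cap$ of~$\cA$ from~\ref{anti}, using the inductive formula for $x_r$ in~\cite[Section~6]{PB1} together with the defining relations of the periplectic Brauer algebra of~\cite[Section~2]{Moon}; the computation is of the same elementary nature as that indicated in the proof of Lemma~\ref{LemJM}. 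The main obstacle, as in~\ref{LemJM}, is to keep track correctly of the periplectic sign rules throughout the manipulation.
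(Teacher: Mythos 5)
Your reduction of the four identities to statements about $\cap$, $\cup$ and the idempotents $\gamma^{(r+1)}_j$, $\gamma^{(r+2)}_k$ is exactly the paper's strategy, and the four displayed reductions are correct. Your cap--JM relation is also correct. The genuine problem is your cup--JM relation: you state
$(x_{r+1}\otimes\II)\circ(\II^{\otimes r}\otimes\cup)=(x_{r+2}+1)\circ(\II^{\otimes r}\otimes\cup)$,
whereas the correct relation (this is \cite[Lemma~6.3.1(1)]{PB1}, which the paper simply cites rather than reproving) is
$x_{r+2}\circ(\II^{\otimes r}\otimes\cup)=(x_{r+1}+1)\circ(\II^{\otimes r}\otimes\cup)$,
i.e.\ $(x_{r+1}\otimes\II)\circ(\II^{\otimes r}\otimes\cup)=(x_{r+2}-1)\circ(\II^{\otimes r}\otimes\cup)$. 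The two versions are genuinely different (adding them would give $2\cup=0$), and the direction of the shift is not a cosmetic detail: it is precisely what determines whether the surviving adjacent eigenvalue is $i+1$ or $i-1$. With the correct relation one gets $\gamma^{(r+2)}_k\gamma^{(r+1)}_j(\,\cdot\otimes\cup)=0$ unless $k=j+1$, which yields your third and fourth reductions; with your version the bookkeeping forces $k=j-1$ instead, so your own ``recipe'' applied to your cup relation would prove the third and fourth identities with $i+1$ and $i-1$ interchanged, contradicting both the lemma and the reductions you wrote down. The asymmetry between cap and cup (shift $x_{r+2}=x_{r+1}-1$ on the cap side versus $x_{r+2}=x_{r+1}+1$ on the cup side) is a real feature of the periplectic setting, and your proposal obtains the cup relation by naively transposing the cap relation while keeping the same shift.

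Compounding this, the verification of the cap/cup--JM relations is the step you yourself call ``the main (and only substantive) step'', yet it is deferred to an unperformed direct calculation in the generators of $\cA$ with the acknowledged risk of sign errors --- and indeed the stated target of that calculation is wrong for the cup. To repair the proof, replace your cup relation by the one from \cite[Lemma~6.3.1(1)]{PB1} quoted above (or carry out the diagram calculation carefully enough to land on it), after which your eigenvalue argument goes through verbatim and yields the correct index $i+1$ in the third identity and $i-1$ in the fourth.
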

\begin{proof}
By \cite[Lemma~6.3.1(1)]{PB1}, for $k\in\mN$, we have
$$(\II^{\otimes k}\otimes \cap)\circ x_{k+1}\;=\;(\II^{\otimes k}\otimes \cap)\circ (x_{k+2}+\II^{\otimes k+2})\quad\mbox{and}$$
$$x_{k+2}\circ(\II^{\otimes k}\otimes \cup)\;=\; (x_{k+1}+\II^{\otimes k+2})\circ (\II^{\otimes k}\otimes \cup).$$
From the first equation we find
\begin{equation}\label{eqcap0}(\II^{\otimes k}\otimes \cap)\gamma^{(k+2)}_i\gamma^{(k+1)}_j=0,\qquad\mbox{unless}\quad j=i+1. \end{equation}
Equation~\eqref{eqsum1} then further implies
\begin{equation}\label{eqcap1}(\II^{\otimes k}\otimes \cap)\gamma^{(k+2)}_{i}=(\II^{\otimes k}\otimes \cap)\gamma^{(k+2)}_{i}\gamma^{(k+1)}_{i+1}=(\II^{\otimes k}\otimes \cap)\gamma^{(k+1)}_{i+1}.\end{equation}
These equations, and their analogues for~$\cup$ can be used to prove the proposed equalities. We do this explicitly for the first one.

For~$X=(r,e)$, we calculate, using equation~\eqref{eqDefT} and the definitions in \ref{Defepsilon} and \ref{Defiotapi}, that
$$(\varepsilon\circ\iota^{i}_{\TT})_X\;=\; (e\otimes \cap)\circ (e\otimes\II\otimes\II)\gamma_i^{(r+2)} \;=\;(e\otimes \cap)\gamma_i^{(r+2)}$$
and
\begin{eqnarray*}(\varepsilon\circ (\iota^{i}\star (\iota^{i+1}\circ\pi^{i+1})))_X&=&\varepsilon_X\circ \iota^i_{\TT X}\circ\TT_i(\iota_X^{i+1}\circ\pi^{i+1}_X)\\
&=&(e\otimes\cap)\circ (e\otimes\II\otimes\II)\gamma_i^{(r+2)}\circ ((e\otimes\II)\gamma_{i+1}^{(r+1)}\otimes \II) \gamma_i^{(r+2)}\\
&=& (e\otimes \cap)\gamma_i^{(r+2)}\gamma^{(r+1)}_{i+1}.
\end{eqnarray*}
By \eqref{eqcap1}, these two morphisms are the same indeed.
\end{proof}

The following two lemmata were inspired by \cite[Lemmata~2.7 and~2.8]{ES}.
\begin{lemma}\label{Lempsi}
For~$\psi_r:=s_{r\,\minus\,1}(x_{r\,\minus\,1}-x_{r})+1\in A_{r} $, with~$r\ge 2$, we have
\begin{enumerate}[(i)]
\item $x_{r}\psi_r=\psi_rx_{r\,\minus\,1}-\epsilon_{r\,\minus\,1}$,
\item $x_{r\,\minus\,1}\psi_r=\psi_rx_{r}-\epsilon_{r\,\minus\,1}$,
\item $\psi_r^2=1-(x_{r\,\minus\,1}-x_{r})^2,$
\item $\psi_r\circ(a\otimes\II\otimes \II)\;=\;(a\otimes\II\otimes\II)\circ\psi_{s},$ for any $a\in\Hom_{\cA}(s\,\minus\,2,r\,\minus\,2)$.
\end{enumerate}
\end{lemma}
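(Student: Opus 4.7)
The plan is to verify each identity by direct computation in $A_r$, exploiting the "skein" relations for the Jucys-Murphy element $x_r$ recorded in \cite[Section~6]{PB1}. The facts I will need are: $x_r$ commutes with $A_{r-1}$ (so in particular $x_{r-1}x_r=x_rx_{r-1}$, cf.~\cite[Lemma~6.1.2]{PB1}), a Coxeter-like relation expressing $x_rs_{r-1}$ in terms of $s_{r-1}x_{r-1}$ modulo an error involving $1$ and $\epsilon_{r-1}$, and an "annihilation" relation of the form $\epsilon_{r-1}(x_{r-1}+x_r)=0=(x_{r-1}+x_r)\epsilon_{r-1}$. These combined will produce, after cancellation, precisely a single $-\epsilon_{r-1}$ correction term as predicted.

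For (i), I will expand $x_r\psi_r=x_rs_{r-1}(x_{r-1}-x_r)+x_r$ and use the skein relation to rewrite $x_rs_{r-1}$ as $s_{r-1}x_{r-1}$ plus error terms. The analogous expansion of $\psi_rx_{r-1}=s_{r-1}(x_{r-1}-x_r)x_{r-1}+x_{r-1}$ uses commutativity of $x_{r-1}$ and $x_r$ to realign the leading term. Subtracting shows that the $s_{r-1}$-contributions cancel, and the remaining scalar and $\epsilon_{r-1}$-contributions, simplified using the annihilation relation, collapse to $-\epsilon_{r-1}$. Part (ii) can then be proved either by a symmetric computation interchanging the roles of $x_{r-1}$ and $x_r$, or (if the anti-involution of $A_r$ fixing generators of the Brauer category applies) deduced formally from (i).

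For (iii), I will apply the expansion
$\psi_r^2=s_{r-1}(x_{r-1}-x_r)s_{r-1}(x_{r-1}-x_r)+2s_{r-1}(x_{r-1}-x_r)+1$.
The core computation is evaluating $s_{r-1}(x_{r-1}-x_r)s_{r-1}$ using the skein relation twice; this should return $x_r-x_{r-1}$ plus an $\epsilon_{r-1}$-correction that, when multiplied by $(x_{r-1}-x_r)$ from the right, is cancelled via the annihilation relation. Combined with the cross-term $2s_{r-1}(x_{r-1}-x_r)$, the $s_{r-1}$-contributions vanish, leaving $1-(x_{r-1}-x_r)^2$. Alternatively, one may apply (i) and (ii) iteratively to $\psi_r^2=\psi_r\cdot\psi_r$ and track the resulting $\epsilon_{r-1}$-terms.

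For (iv), the key observation is that each of $s_{r-1}$, $\epsilon_{r-1}$, $x_{r-1}$, $x_r$ is represented by a Brauer diagram whose non-trivial strand interactions lie in positions $\ge r-1$ (resp. $\ge s-1$ on the other side). The morphism $a\otimes\II\otimes\II$ acts non-trivially only on the complementary strands, and all relevant morphisms are even (see \ref{SSBrdiag}), so the super interchange law \eqref{superComp} yields the desired commutation of $\psi_r$ past $a\otimes\II\otimes\II$. The main obstacle throughout will be keeping the sign conventions for the periplectic skein relations straight: unlike the classical Brauer case, the periplectic analogues carry extra signs from the super structure, and these must be tracked carefully to recover the clean final expressions.
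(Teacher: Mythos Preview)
Your approach to (i)--(iii) is exactly what the paper does: it simply cites the commutation relations in \cite[Lemma~6.3.1]{PB1} and leaves the direct computation to the reader, and your proposal spells out precisely how that computation goes.

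For (iv), however, your justification has a gap. You claim that $x_{r-1}$ and $x_r$ are ``represented by a Brauer diagram whose non-trivial strand interactions lie in positions $\ge r-1$'', but Jucys--Murphy elements are \emph{sums} of diagrams involving all strands $1,\ldots,r$, not just the top two. So the purely diagrammatic ``disjoint support'' argument that works for $s_{r-1}=\II^{\otimes r-2}\otimes\XX$ and $\epsilon_{r-1}$ does not apply directly to $x_{r-1}$ and $x_r$. The paper handles this by invoking Lemma~\ref{LemJM}, which states that $(a\otimes\II)x_{r+1}=x_{s+1}(a\otimes\II)$ for any $a\in\Hom_{\cA}(s,r)$; applying this (once with $a$ and once with $a\otimes\II$) gives the required intertwining of $x_{r-1}$ and $x_r$ with $a\otimes\II\otimes\II$. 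This is the one non-trivial ingredient in (iv), and your proposal should cite it rather than the diagrammatic locality heuristic.
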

\begin{proof}
Parts (i)-(iii) are direct applications of the commutation relations in~\cite[Lemma~6.3.1]{PB1}. Part (iv) follows from the fact that~$s_{r\,\minus\,1}$ is equal to $\II^{\otimes r\,\minus\,2}\otimes \XX$ and Lemma~\ref{LemJM}.
\end{proof}
Now we define a family of even morphisms. For each $X=(r,e)\in\Ob\DA$, we set
$$\varphi_X:\TT\TT(X)\to\TT\TT(X),\qquad\varphi_X=\psi_{r+2}(e\otimes\II\otimes\II)=(e\otimes\II\otimes\II)\psi_{r+2}.$$
Again we extend to arbitrary objects in~$\DA$ and we obtain a natural transformation~$\varphi:\TT\TT\Rightarrow\TT\TT$ by Lemma~\ref{Lempsi}(iv).

\begin{lemma}\label{LemCatb} Take $i,j\in\mZ$, such that~$|i-j|>1$, then
\begin{enumerate}[(i)]
\item $\varphi\circ(\iota^i\star\iota^j)\;=\; (\iota^j\star\iota^j)\circ (\pi^j\star\pi^i)\circ\varphi \circ(\iota^i\star\iota^j)$ as natural transformations $\TT_i\TT_j\Rightarrow \TT\TT$;
\item $(\pi^i\star\pi^j)\circ\varphi\circ\varphi\circ(\iota^i\star\iota^j)$ is a natural isomorphism of~$\TT_i\TT_j$.
\end{enumerate}
\end{lemma}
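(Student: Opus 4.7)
The plan is to translate both assertions into element-level identities in $A_{r+2}$. For $X=(r,e)$, I identify $\varphi_X$, $(\iota^i\star\iota^j)_X$ and $(\pi^j\star\pi^i)_X$ with the elements $\psi_{r+2}(e\otimes\II^{\otimes 2})$, $\gamma_i^{(r+2)}\gamma_j^{(r+1)}(e\otimes\II^{\otimes 2})$ and $\gamma_j^{(r+2)}\gamma_i^{(r+1)}(e\otimes\II^{\otimes 2})$ of $A_{r+2}$, via the identification in \eqref{MorDeli}, as was done in the proofs of Lemmata \ref{LemCat3} and \ref{Lempsi}. Here $\gamma_i^{(r+2)}\in\mk[x_{r+2}]$ commutes with $\gamma_j^{(r+1)}\in\mk[x_{r+1}]$, so the idempotent $\gamma_i^{(r+2)}\gamma_j^{(r+1)}$ projects onto the joint generalised $(i,j)$-eigenspace of $(x_{r+2},x_{r+1})$ on every module; naturality in $X$ is automatic from Lemmata~\ref{LemJM} and \ref{Lempsi}(iv).

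For part~(ii), Lemma~\ref{Lempsi}(iii) gives $\psi_{r+2}^2=1-(x_{r+1}-x_{r+2})^2$, which commutes with $\gamma_i^{(r+2)}\gamma_j^{(r+1)}$. The composite is therefore represented in $A_{r+2}$ by $(1-(x_{r+1}-x_{r+2})^2)\gamma_i^{(r+2)}\gamma_j^{(r+1)}$. On the image of this idempotent, $(x_{r+1}-x_{r+2})^2=(i-j)^2+N$ with $N$ nilpotent, so the operator acts as the scalar $1-(i-j)^2$ plus a nilpotent; since $|i-j|>1$ forces $(i-j)^2\geq 4$, this scalar is nonzero and the operator is invertible on $\TT_i\TT_j(X)$.

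Part~(i) reduces to showing $\gamma_a^{(r+2)}\gamma_b^{(r+1)}\,\psi_{r+2}\,\gamma_i^{(r+2)}\gamma_j^{(r+1)}=0$ in $A_{r+2}$ for every pair $(a,b)\neq(j,i)$. The key preliminary step is the vanishing $\epsilon_{r+1}\gamma_i^{(r+2)}\gamma_j^{(r+1)}=0$ under the hypothesis $|i-j|>1$: the identity $(\II^{\otimes r}\otimes\cap)(x_{r+1}-x_{r+2}-1)=0$ from \cite[Lemma~6.3.1(1)]{PB1} yields $\epsilon_{r+1}(x_{r+1}-x_{r+2}-1)=0$ upon composing with $\II^{\otimes r}\otimes\cup$ on the left; on the joint $(i,j)$-eigenspace $x_{r+1}-x_{r+2}-1$ acts as $(j-i-1)+M$ with $M$ nilpotent, and since $j-i-1\neq 0$ the relation $\epsilon_{r+1}\big((j-i-1)+M\big)\gamma_i^{(r+2)}\gamma_j^{(r+1)}=0$ iterates to $\epsilon_{r+1}\gamma_i^{(r+2)}\gamma_j^{(r+1)}=0$. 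Granted this, Lemma~\ref{Lempsi}(i)--(ii) yield, for $v$ in the joint $(i,j)$-eigenspace, the identities $(x_{r+2}-j)\psi_{r+2}v=\psi_{r+2}(x_{r+1}-j)v$ and $(x_{r+1}-i)\psi_{r+2}v=\psi_{r+2}(x_{r+2}-i)v$; iterating the nilpotents $x_{r+1}-j$ and $x_{r+2}-i$ shows that $\psi_{r+2}v$ lies in the joint $(j,i)$-eigenspace, so projecting via $\gamma_a^{(r+2)}\gamma_b^{(r+1)}$ with $(a,b)\neq(j,i)$ produces zero.

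The main obstacle is the preliminary vanishing $\epsilon_{r+1}\gamma_i^{(r+2)}\gamma_j^{(r+1)}=0$ in part~(i); once that is established, everything else is formal commutation together with orthogonality of idempotents. The hypothesis $|i-j|>1$ enters precisely through the invertibility of the scalar $j-i-1$, which is exactly the algebraic shadow of the Temperley-Lieb commutation relation $T_iT_j=T_jT_i$ for $|i-j|>1$.
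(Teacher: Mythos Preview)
Your proof is correct and follows essentially the same route as the paper. Both arguments reduce to the element-level identities in $A_{r+2}$, use Lemma~\ref{Lempsi}(iii) for part~(ii) in the same way, and for part~(i) deduce the key vanishing $\epsilon_{r+1}\gamma_i^{(r+2)}\gamma_j^{(r+1)}=0$ from the identity $(\II^{\otimes r}\otimes\cap)(x_{r+1}-x_{r+2}-1)=0$ (the paper packages this as equation~\eqref{eqcap0}), then apply Lemma~\ref{Lempsi}(i)--(ii) to show that $\psi_{r+2}$ swaps the generalised eigenvalues.
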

\begin{proof}
We start by proving part (i). For~$X=(r,e)$, we have
$$(\varphi\circ(\iota^i\star\iota^j))_X\;=\;(e\otimes\II\otimes\II)\psi_{r+2}\gamma_i^{(r+2)}\gamma_j^{(r+1)}.$$
By Lemma~\ref{Lempsi}(i), we have
$$(x_{r+2}-j)(e\otimes\II\otimes\II)\psi_{r+2}\gamma_i^{(r+2)}\gamma_j^{(r+1)}=(e\otimes\II\otimes\II)\left(\psi_{r+2}(x_{r+1}-j)\gamma_i^{(r+2)}\gamma_j^{(r+1)}-\epsilon_{r+1}\gamma_i^{(r+2)}\gamma_j^{(r+1)}\right).$$
As we assume that~$j\not=i+1$, the last term vanishes by equation~\eqref{eqcap0}. Multiplying $(\varphi\circ(\iota^i\star\iota^j))_X$ with~$(x_{r+2}-j)^p$ from the left for the appropriate $p\in\mN$ will thus yield zero, meaning
$$(e\otimes\II\otimes\II)\psi_{r+2}\gamma_i^{(r+2)}\gamma_j^{(r+1)}=\gamma^{(r+2)}_j(e\otimes\II\otimes\II)\psi_{r+2}\gamma_i^{(r+2)}\gamma_j^{(r+1)}.$$
The corresponding reasoning for~$(x_{r+1}-i)$ concludes the proof of part (i).

Now we consider part (ii). By Lemma~\ref{Lempsi}(iii), for~$X=(r,e)$, we have
$$((\pi^i\star\pi^j)\circ\varphi\circ\varphi\circ(\iota^i\star\iota^j))_X\;=\;(e\otimes\II\otimes\II)(1-(x_{r+1}-x_{r+2})^2)\gamma_i^{(r+2)}\gamma_j^{(r+1)}.$$
For any $c\in\mk$, we can expand
$$1-(x_{r+1}-x_{r+2})^2\;=\;(1-c^2)-(x_{r+1}-x_{r+2}-c)^2-2c(x_{r+1}-x_{r+2}-c)$$
If we set $c=j-i$, then this allows to write the above morphism as the sum of~$(1-c^2)1_{\TT_i\TT_j X}$ and a nilpotent one. Since~$c^2\not=1$, this means that the morphism is an isomorphism of~$\TT_i\TT_j X$.
\end{proof}

\begin{proof}[Proof of Theorem~\ref{ThmCat}]
The relation~$\TT_i^2\cong 0$ follows immediately from Theorem~\ref{ThmDecat}.

Now assume that~$|i-j|>1$. The composition
$$(\pi^i\star \pi^j)\circ\varphi\circ(\iota^j\star \iota^i)\circ(\pi^j\star\pi^i)\circ\varphi\circ(\iota^i\star\iota^j)$$
corresponding to
$$\TT_i\TT_j\Rightarrow\TT\TT\Rightarrow\TT\TT\Rightarrow \TT_j\TT_i\Rightarrow\TT\TT\Rightarrow\TT\TT\Rightarrow \TT_i\TT_j$$
is an isomorphism, by Lemma~\ref{LemCatb}. We hence have even natural transformations $\alpha:\TT_i\TT_j\Rightarrow \TT_j\TT_i$ and~$\beta:\TT_j\TT_i\Rightarrow\TT_i\TT_j$ such that~$\beta\circ\alpha$ is an isomorphism. Since~$\TT_i\TT_jX\cong\TT_j\TT_iX$ for all $X\in\Ob\DA$, see Theorem~\ref{ThmDecat}, this means that both $\alpha$ and~$\beta$ must be isomorphisms. 

Now we consider the natural transformation
$$\pi^i\circ\TT(\varepsilon)\circ((\iota^i\circ\pi^i)\star (\iota^{i-1}\circ \pi^{i-1})\star (\iota^i\circ\pi^i))\circ\eta_{\TT}\circ\iota^i,$$
corresponding to
$$\TT_i\Rightarrow \TT\Rightarrow\TT\TT\TT\Rightarrow\TT_i\TT_{i-1}\TT_i\Rightarrow\TT\TT\TT\Rightarrow \TT\Rightarrow\TT_i.$$
Using the standard interchange laws $\eta_{\TT}\circ\iota^i=\TT\TT(\iota^i)\circ\eta_{\TT_i}$ and~$\pi^i\circ\TT(\varepsilon)=\TT_i(\varepsilon)\circ\pi^i_{\TT\TT}$, subsequently Lemma~\ref{LemCat3}, again the interchange laws, and finally Lemma~\ref{LemCat2}, shows that the composition above is equal 
to $1_{\TT_i}=\pi^i\circ\iota^i$.
In particular, we find odd natural transformations $\alpha:\TT_i\Rightarrow \TT_i\TT_{i-1}\TT_i$ and~$\beta: \TT_i\TT_{i-1}\TT_i\Rightarrow\TT_i$ such that~$\beta\circ\alpha=1_{\TT_i}$. As $\TT_i(X)\cong\TT_i\TT_{i-1}\TT_i(X)$ for all $X\in\Ob\DA$, see Theorem~\ref{ThmDecat}, it follows that~$\alpha$ and~$\beta$ are isomorphisms.
The relation for~$i+1$ follows similarly.
\end{proof}

\begin{proof}[Proof of Proposition~\ref{PropAff}]
We define $\tau:\TT\TT\Rightarrow\TT\TT$ as $\eta\circ\varepsilon$. Similarly, we define $\sigma:\TT\TT\Rightarrow\TT\TT$ by setting
$$\sigma_X=(e\otimes \XX)=s_{r+1} (e\otimes \II\otimes \II),\qquad\mbox{for $X=(r,e)$.}$$ 
Now we argue that the relations in \cite[Definition~3.1]{ChenPeng} are satisfied. It is easy to see that it suffices to prove that evaluation on the objects $k\in\Ob\cA\subset\Ob\DA$ 
actually yields morphisms
$$\widehat{P}^-_d\to\End_{\DA}(T^d(k))=\End_{\cA}(d+k)=A_{d+k}.$$
That we indeed get an algebra morphism
$$\widehat{P}^-_d\to A_{d+k},\qquad s_i\mapsto s_{i+k},\; \epsilon_i\mapsto \epsilon_{i+k}, \;y_j\mapsto x_{j+k},$$
then follows immediately from consistency between the relations in \cite[Definition~3.1]{ChenPeng} and \cite[Section~6.3]{PB1}.
\end{proof}

\subsection{Relation with other categorical representations}
By~\cite[Theorem~4.5.1]{gang}, the functors $\Theta_j$ on~$\Fn$ yield a categorical representation of~$\TL_{\infty}(0)$ on~$\Fn$.
That result served as inspiration for the statement in Theorem~\ref{ThmCat}. Both categorical representations are actually intimately connected, despite the fact that one is on an abelian and one on an additive category. We briefly explore the relation in this section.

\subsubsection{}\label{Compare1}By Lemma~\ref{LemTTTheta}, the decategorification of~$F_n$ is a morphism of~$\TL_{\infty}(0)$-modules. Since~$F_n$ has a kernel, this is not a monomorphism. Moreover, $F_n$ is not essentially surjective and more importantly it is not clear whether the induced morphism
$$[\DA]_{\oplus}\;\to\; [\Fn]$$
from the split Grothendieck group of~$\DA$ to the Grothendieck group of~$\Fn$ is surjective.

\subsubsection{}Since~$\Fn$ has infinite global dimension, the canonical group monomorphism
$$[\pe(n)\mbox{-proj}]_{\oplus}\;\hookrightarrow\;[\Fn]$$
will not be an isomorphism. In fact, by the results in Section~\ref{SecTenProj}, the functor~$\Theta$ restricts to the full additive subcategory~$\pe(n)$-proj and this subcategory constitutes the socle of the categorical $\TL_{\infty}(0)$-representation on~$\Fn$ in~\cite[Theorem~4.5.1]{gang}.

By Theorem~\ref{ThmProj2}, we have an essentially bijective (and full) $\mk$-linear functor from $\ideal_n/\ideal_{n+1}$ to $\pe(n)$-proj, so in particular
$$[\ideal_n/\ideal_{n+1}]_{\oplus} \;\cong\;[\pe(n)\mbox{-proj}]_{\oplus}.$$
By construction and~\ref{Compare1}, this is an isomorphism between the~$\TL_{\infty}(0)$-representation~$\Xi^n$ in \eqref{eqsections} and the decategorification of the socle of the categorical representation of \cite[Theorem~4.5.1]{gang} on~$\Fn$.

In conclusion, our categorical representation~$\Xi$ of~$\TL_{\infty}(0)$ on~$\DA$ admits a filtration, labelled by~$n\in\mN$, where each composition factor `corresponds to' a categorical representation of~$\TL_{\infty}(0)$ on the category of projective modules over $\mathfrak{pe}(n)$ introduced in~\cite{gang}.

\subsubsection{} Contrary to the representation~$\Xi$, the decategorification of \cite[Theorem~4.5.1]{gang} for a fixed $\pe(n)$ is not a faithful representation of~$\TL_{\infty}(0)$. Indeed, using the combinatorics of \cite[Section~5.2]{gang}, it follows that a generic functor of the form
$$\Theta_{i_1}\Theta_{i_2}\cdots\Theta_{i_p}$$
with~$p>n$ will send any (thick) Kac module to zero. Since the functor is exact this automatically implies that it maps any module to zero. In particular $T_{i_1}T_{i_2}\cdots T_{i_p}$ will generically act as zero on the decategorified representation of~$\TL_{\infty}(0)$.

\subsection*{Acknowledgement}
We thank Volodymyr Mazorchuk, Catharina Stroppel and Oded Yacobi for very useful discussions. We would also like to thank Joanna Meinel for useful comments. This research was supported by the Australian Research Council grants DP140103239 and DP150103431.

\end{document}